\newcommand{\rmd}{{\mathrm{d}}}
\newcommand{\R}{\mathbb{R}}
\newcommand{\E}{\operatorname{\mathbb{E}}}
\newcommand{\Var}{\operatorname{Var}}
\newcommand{\rme}{{\rm e}}
\newtheorem{Theorem}{Theorem}[section]
\newtheorem{Corollary}{Corollary}[section]
\newtheorem{Lemma}{Lemma}[section]
\newtheorem{Proposition}{Proposition}[section]
\newtheorem{Definition}{Definition}[section]
\newtheorem{Remark}{Remark}
\begin{document}

\title{\bf{On the excursion area of perturbed Gaussian fields}}

\author{\sc Elena Di Bernardino\footnote{Conservatoire National des Arts et M\'{e}tiers, Paris, EA4629,  292 rue Saint-Martin, Paris Cedex 03, France; elena.dibernardino@lecnam.net}, Anne Estrade\footnote{MAP5 UMR CNRS 8145, Universit\'e Paris Descartes,  45 rue des Saints-P\`eres, 75270 Paris Cedex 06, France; anne.estrade@parisdescartes.fr}\, and Maurizia Rossi\footnote{Dipartimento di Matematica, Universit\`a di Pisa, 5 Largo Bruno Pontecorvo, 56127 Pisa, Italia; maurizia.rossi@unipi.it}}

 \maketitle

 \begin{abstract}
\noindent We investigate Lipschitz-Killing curvatures for excursion sets of random fields on $\mathbb R^2$ under small spatial-invariant random perturbations. An expansion formula for mean curvatures is derived when the magnitude of the perturbation vanishes, which recovers the Gaussian Kinematic Formula at the limit by contiguity of the model. We develop an asymptotic study of the perturbed excursion area behaviour that leads to a quantitative non-Gaussian limit theorem, in Wasserstein distance, for fixed small perturbations and growing domain. When letting both the perturbation vanish and the domain grow, a standard Central Limit Theorem follows. Taking advantage of these results, we propose an estimator for the perturbation which turns out to be asymptotically normal and unbiased, allowing to make inference through sparse information on the field.
\smallskip

\noindent {\bf Key words:} LK curvatures; Gaussian fields; perturbed fields; quantitative limit theorems; sojourn times; sparse inference for random fields.

\smallskip

\noindent {\bf AMS Classification:} 60G60; 60F05, 60G15, 62M40, 62F12.

 \end{abstract}


\section{Introduction}

A wide range of phenomena can be seen as single realizations of a random field, for instance the Cosmic Microwave Background radiation (CMB) (see \citet{MPbook}), medical images of brain activity (see \citet{Worsley}) and of mammary tissue (see \citet{Burgess}) and many others.  Their features can be investigated through geometrical functionals, among them the well-known class of Lipschitz-Killing (LK) curvatures of excursion sets (see e.g. \citet{Weil08} and \citet{Thale08} for a precise definition and \cite{Yabi2} for some applications in cosmology).
 From a theoretical point of view,  probabilistic and statistical properties of the latter have been widely studied in the last decades.  For instance, in the two-dimensional Euclidean setting, in \citet{cabana1987affine, BD16, Berzin}, the length of the level sets (\emph{i.e.} the perimeter of the excursion sets) is taken into account, in \citet{annejose} the Euler-Poincar\'e characteristic, while several limit theorems are obtained for the excursion area in \citet{bulinski2012central, Spodarev13}. See \citet{KV16, Mu16} for higher dimensions. In this manuscript we focus on the two-dimensional setting, \emph{i.e.} random fields defined on $\mathbb R^2$ endowed with the standard Euclidean metric.

In many cases, the LK curvatures are studied for Gaussian excursion sets via the Gaussian Kinematic Formula  (see, \emph{e.g.}, \citet{AT07, BDDE}). In this framework, a natural question is the following: \emph{how do these geometric quantities change under small perturbations of the underlying field?} The present work gives an answer in the case of an independent, additive, spatial-invariant perturbation of a stationary isotropic Gaussian field. Indeed, this model naturally arises when taking into account measurement errors that globally affect all the observations in a physical experiments.
As briefly anticipated above, LK curvatures have been very extensively exploited in the recent cosmological
literature as a tool to probe non-Gaussanity and anisotropies in the
CMB (see \emph{e.g.} \cite{Yabi1} and \cite{Yabi2}). Our setting could be viewed as the
representation of a Gaussian field contaminated by super-imposed point
sources (i.e., galaxies and other astrophysical objects), and in this
sense it could be used for point source detection or map validation in
the framework of CMB data analysis.
We remark that the perturbation of a Gaussian field obtained by adding either an independent Gaussian field or a function of the field itself can be fully treated  through Gaussian techniques (see  \citet{BTV}). Some computations of expected values of LK curvatures of
excursion sets in
the latter setting have been given in the physical
literature by e.g. \cite{Matsubara1} and \cite{Matsubara2} in order to derive a promising
method to constrain the primordial non-Gaussianity of the universe by temperature fluctuations
in the CMB.

However,
our aim is not to develop the theory of LK curvatures for excursion sets of general non-Gaussian fields (see for instance  \citet{adler_samorodnitsky_taylor_2010, BD16, Rapha})  but to go beyond Gaussianity by introducing a small perturbation of the underlying Gaussian field. This perturbation clearly appears in the LK curvatures of excursion sets, allowing us to measure the discrepancy between the original and perturbed fields. Moreover, we are able to recover the classical Gaussian case by contiguity, \emph{i.e.}, when the perturbation vanishes.

Our model can be seen as a random affine transformation  of the initial excursion level. A deterministic and more challenging counterpart has been recently studied in \citet{nodal} where a different geometrical functional is considered, namely, the number of connected components of excursion sets.

At last, considering excursion sets instead of the whole field is a sparse information that is commonly used by practitioners (see, for instance, Chapter 5 in \citet{AT11}). Furthermore, it is equivalent to consider thresholded fields, which is a standard model in physics literature  (see, \emph{e.g.}, \citet{ImageAnalStereol751, Roberts1995, Roberts1999}).


\medskip

 {\it Main contributions.} In this paper, we provide an expansion formula for the perturbed LK curvatures (see Proposition \ref{LKadditive}) where the contiguity property clearly appears for a vanishing perturbation. Visually, the perturbation is not evident by looking at the image of excursion sets (see Figure 1) but its impact can be detected by an image processing through the evaluation of their LK curvatures (see Figure 3). Moreover, an asymptotically normal and unbiased estimator for the variance of the perturbation magnitude is proposed in Proposition \ref{epsilonProposition}. In order to get the Gaussian limiting behaviour of the latter estimator, we develop an asymptotic study of the second LK curvature, \emph{i.e.} the area, of the perturbed excursion sets. We analyze both the case when the perturbation vanishes and the domain of observation grows to $\mathbb R^2$ (see Theorem \ref{theoremThetaAsym}) and the case of a fixed small perturbation and growing domain (see Theorem \ref{quantitative}). The former is a standard CLT result, the latter is a quantitative limit theorem towards a non-Gaussian distribution, giving an upper bound for the convergence rate in Wasserstein distance. We deeply study the unusual non-Gaussian limiting law (see Theorem \ref{theoTCLZ} and Figures \ref{fdeltafigure}, \ref{ZepsilonFigure} and \ref{ZepsilonFigure2}).

 An auxiliary result which is of some interest for its own is collected in Lemma  \ref{lem_app}  where uniform rates  (w.r.t. the level) of convergence for sojourn times  of general Gaussian fields are proved.  An argument similar to the one in the proof of Lemma \ref{lem_app} allows to obtain uniform rates of convergence also for sojourn times of random hyperspherical harmonics, at the cost of getting worse rates than those found in  \citet{maudom}.

\medskip

{\it Outline of the paper.} Section \ref{LK} is devoted to the study of mean LK curvatures of our perturbed model. In particular, in Section \ref{preliminary}  we recall the notion of LK curvatures for Borel sets, and then introduce our setting; in Section \ref{LKmean} we derive the asymptotic expansion for the mean curvatures as the perturbation vanishes (Proposition \ref{LKadditive}) providing some numerical evidence in Figures \ref{figureLKGAUSSIAN} and  \ref{figureLKperturbed}.

In Section \ref{asymtoticSmallEpsilon} we state and prove the quantitative limit theorem, in Wasserstein distance, for the excursion area of the perturbed model for fixed small perturbations and growing domain (Theorem \ref{quantitative}) .  Theorem \ref{theoTCLZ} characterizes the unusual non-Gaussian limiting distribution whose numerical investigation leads to Figure \ref{ZepsilonFigure} and Figure \ref{ZepsilonFigure2}. In Section \ref{CLTarea} we state and prove the standard CLT for the excursion area for growing domain and disappearing perturbation (Theorem \ref{theoremThetaAsym}).

Taking advantage of the asymptotic studies for LK curvatures in Section \ref{LK} and Section \ref{asymtoticSmallEpsilon}, in Proposition \ref{epsilonProposition} we prove that the proposed estimator for the perturbation variance is unbiased and asymptotically normal. Its performance can be appreciated in Figure \ref{varepsilonFigure1}.

Finally, Appendix \ref{appS} collects the auxiliary result on uniform rates of convergence for sojourn times of Gaussian fields.

\section{LK curvatures  for the considered  perturbed  Gaussian  model}\label{LK}

\subsection{Definitions and preliminary notions}\label{preliminary}

In the present paper we consider  the   three additive functionals, called in the literature   intrinsic volumes, Minkowski functionals or Lipschitz-Killing curvatures,  $L_j$ for $j=0,1,2$, defined on subsets of Borelians in $\R^2$.  Roughly speaking, for $A$ a Borelian set in $\R^2$, $L_0(A)$ stands for the Euler characteristic of $A$, $L_1(A)$ for the half perimeter of its boundary and $L_2(A)$ is equal to its area, \emph{i.e.} the two-dimensional Lebesgue measure. Taking inspiration from the unidimensional framework, the $L_2$  functional is also called sojourn time, although no time is involved in this context. \\

\textbf{Notations.}
All over the paper, $\|\cdot \|$ denotes the Euclidean norm in $\R^2$ and $I_2$ the $2 \times 2$ identity matrix. \smallskip

We will also denote by $|\cdot|$ the two-dimensional Lebesgue measure of any Borelian set in $\R^2$ and by $|\cdot|_1$ its one-dimensional Hausdorff measure. In particular, when $T$ is a bounded rectangle in $\R^2$ with non empty interior,
\begin{align*} 
 L_{0}(T)=1, \quad L_{1}(T)=\frac 12 |\partial T|_1, \quad  L_{2}(T)=|T|,
\end{align*}
where $\partial T$ stands for the boundary of $T$. \smallskip

Let $T$ be a bounded rectangle in $\R^2$ with non empty interior.  In the following notation $T\nearrow \R^2$ stands for the limit along any sequence of bounded rectangles that grows to $\R^2$.  For that, set $N>0$ and define $$T^{(N)}:=\big\{Nt: t\in T\big\}$$ the image of a fixed rectangle $T$ by the dilatation $t\mapsto Nt$; then letting $T \nearrow \R^2$ is equivalent to $N\to\infty$. Remark that $T^{(N)}$ is a Van Hove (VH)-growing sequence (see Definition 6 in \citet{bulinski2012central}),  \emph{i.e.}, $|\partial T^{(N)}|_{1}/|T^{(N)}|\to0$ as $N\to\infty$.  In the sequel, we sometimes drop the dependency in $N$ of the rectangle $T$ to soften notation.

\medskip

We now define the main notions that we will deal with.

\begin{Definition}[Considered Gaussian field] \label{defFGT}
Let  $g$   be a Gaussian random field  defined on $\R^2$ that is
\begin{itemize}
\item stationary, isotropic with $\E[g(0)]=0$,  $\Var g(0)=\sigma^2_g$, $\Var g'(0)= \lambda I_2$ for some $\lambda>0$, $\sigma_g > 0$,
\item whose covariance function $r(t) = Cov(g(0), g(t))$ is $\mathcal C^4$ and satisfies
$$| r(t) |  = O(\| t \|^{-\alpha}),~\mbox{ for some}~\alpha > 2~\mbox{ as }~\|t\|\to\infty.$$
\end{itemize}
\end{Definition}

We will consider perturbations of the above Gaussian field prescribed by the following.

\begin{Definition}[Perturbed  Gaussian  field]\label{MODELadditive}
Let  $X$  be a  random variable such that $\E[|X|^3] < + \infty$ and $\E[X]=0$. Let  $g$  be a Gaussian random field as in  Definition \ref{defFGT},  with  $X$ independent of $g$. We consider the following perturbed field
\begin{center}
$f(t) = g(t) + \epsilon \, X,~ t\in \R^2~$,   with $\epsilon >0$.
\end{center}
\end{Definition}

Let   $u \in \R$ and $T$ a bounded rectangle in $\R^2$. For $h$ any real-valued stationary  Gaussian  random field, we consider the excursion set within $T$ above level $u$:
\[\{t\in T\,:\,h(t)\ge u\}=T\cap E_{h}(u),\quad \mbox{where }E_{h}(u):= h^{-1}([u,+\infty)).\]

We now introduce the  Lipschitz-Killing curvatures for the excursion set $E_{h}(u)$, $u\in\R$ (see  \citet{AT07},  \citet{BD16}, \citet{BDDE} for more details).

\begin{Definition}[LK curvatures of $E_{h}(u)$] \label{defLK}
Let $h$ be a real-valued stationary Gaussian field that is almost surely of class $\mathcal C^2$. Define the following Lipschitz-Killing curvatures for the excursion set $T\cap E_{h}(u)$, $u\in\R$, $T$ bounded rectangle in $\R^2$,
\begin{align}
L_{2}(h,u,T) : &=  |T\cap E_{h}(u)| \nonumber\\
L_{1}(h,u,T) : &= \frac{|\partial (T\cap E_{h}(u))|_{1}}{2},   \nonumber  \\
L_{0}(h,u,T) : &=\sharp  \mbox{ connected components in } T\cap E_{h}(u)  -  \sharp \mbox{ holes in }  T\cap E_{h}(u)  \nonumber.
\end{align}
Furthermore,  the normalized LK curvatures are given  by
\begin{align*} 
C_i^{/T}(h,u):=\frac{L_i(h,u,T)}{|T|},~\mbox{ for }i=0,1,2,
\end{align*}
and  the  associated  LK  densities are
\begin{align*}  
C^{\ast}_i(h,u) := \underset{T\nearrow \R^2}{\lim}\,\E[C^{/T}_i(h,u)],~\mbox{ for } i=0,1,2.
\end{align*}
\end{Definition}

Figure \ref{campi} displays  a realization of a Gaussian random field  (first row) and of the associated perturbed one (second row) and two excursion sets for these fields for $u = 0$ (center) and $u = 1$ (right). We chose here a Student distributed centered  random variable $X$  with $\nu=5$ degrees of freedom, \emph{i.e.}, $X \sim t(\nu=5)$, and $g$ a Bergmann-Fock Gaussian field prescribed by its covariance function $r(t)= \sigma_g^2\, \rme^{-\kappa^2 \parallel t \parallel^2}$.\\

In Figure \ref{campi} one can appreciate a visual similarity between these  images and in particular in terms of their excursion sets. Then it could be difficult to evaluate the perturbation behind the considered  Gaussian model  by looking exclusively at Figure \ref{campi}. This motivate the necessity of an image processing  in order to measure the impact of the perturbation.  The goal of the next section will be to  study the LK curvatures of the perturbed field in order to both quantify the discrepancy between these black-and-white images and evaluate the robustness  with respect to a small perturbation  of the considered   geometrical characteristics of the excursion sets.\\

\begin{figure}[H] \centering
\includegraphics[width=4.4cm, height=4.5cm]{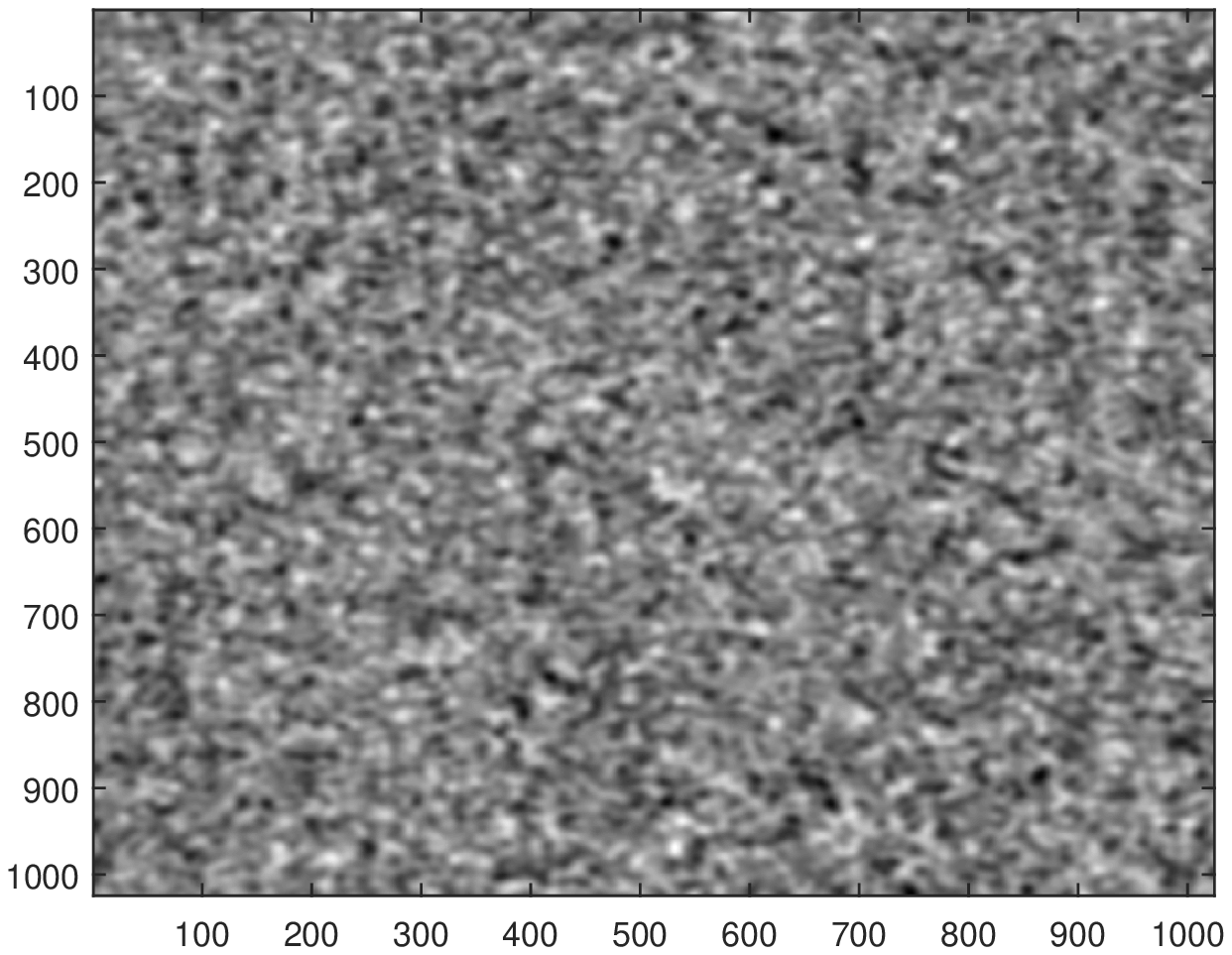}
\includegraphics[width=4.4cm, height=4.5cm]{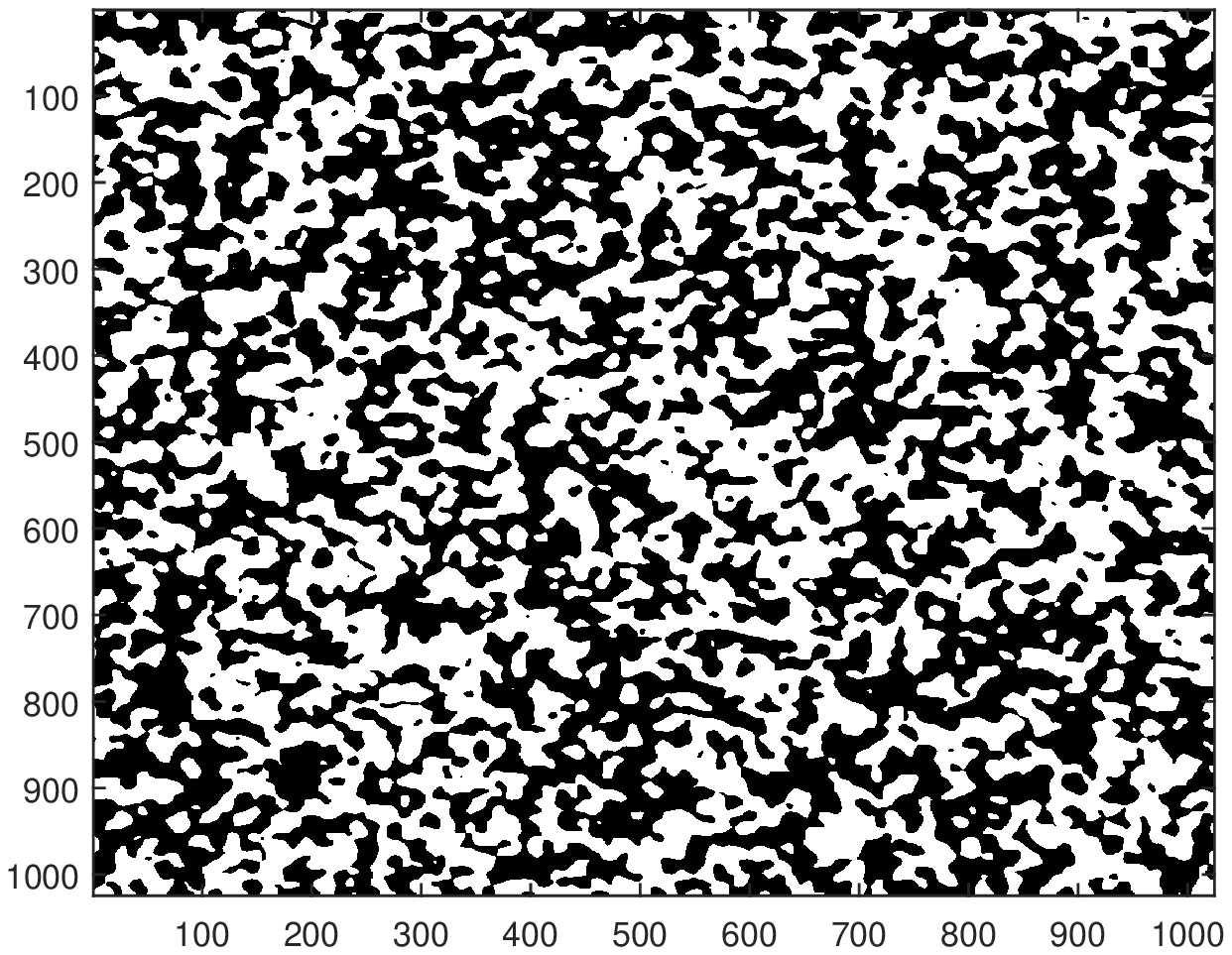}
\includegraphics[width=4.4cm, height=4.5cm]{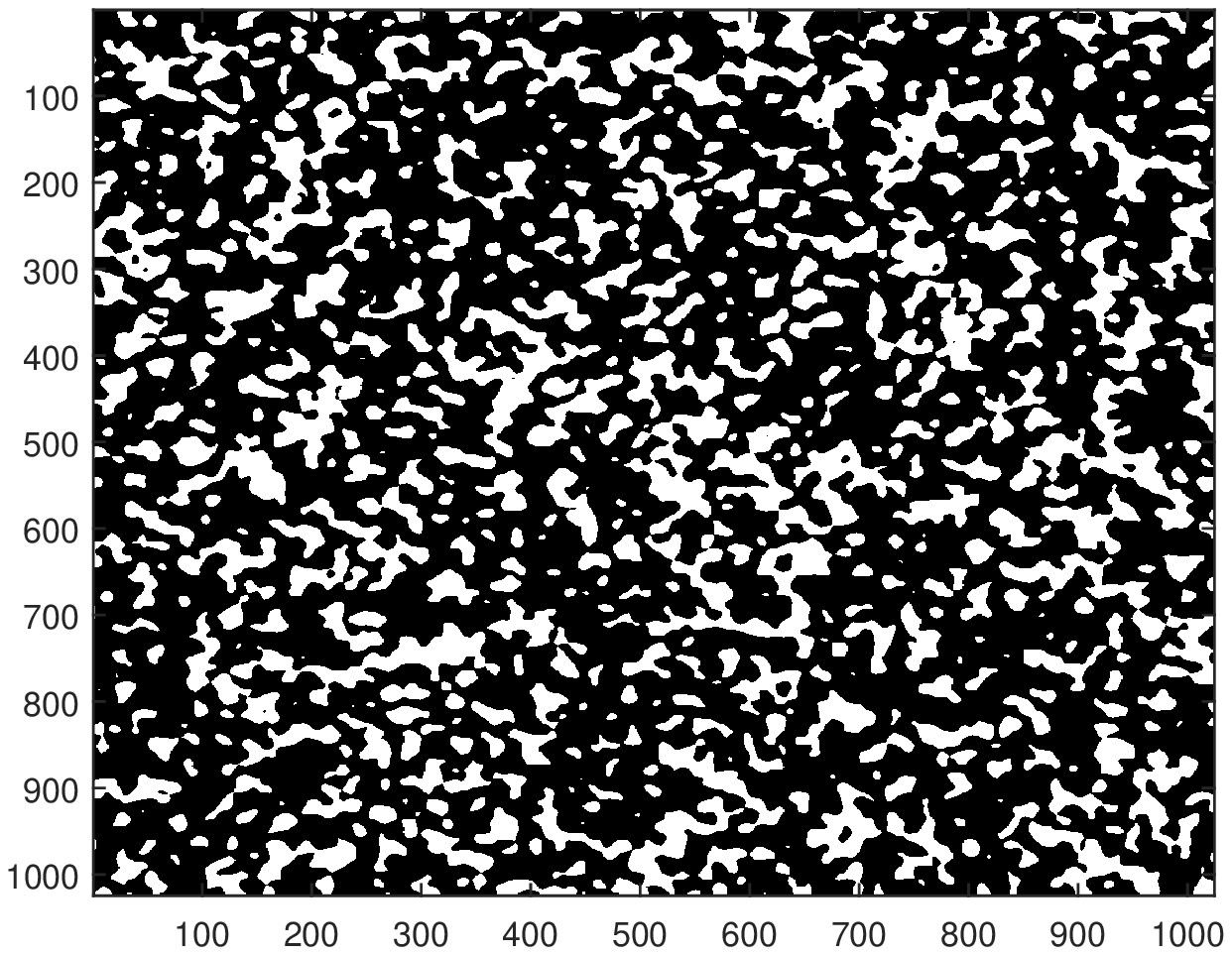}\\
\includegraphics[width=4.4cm, height=4.5cm]{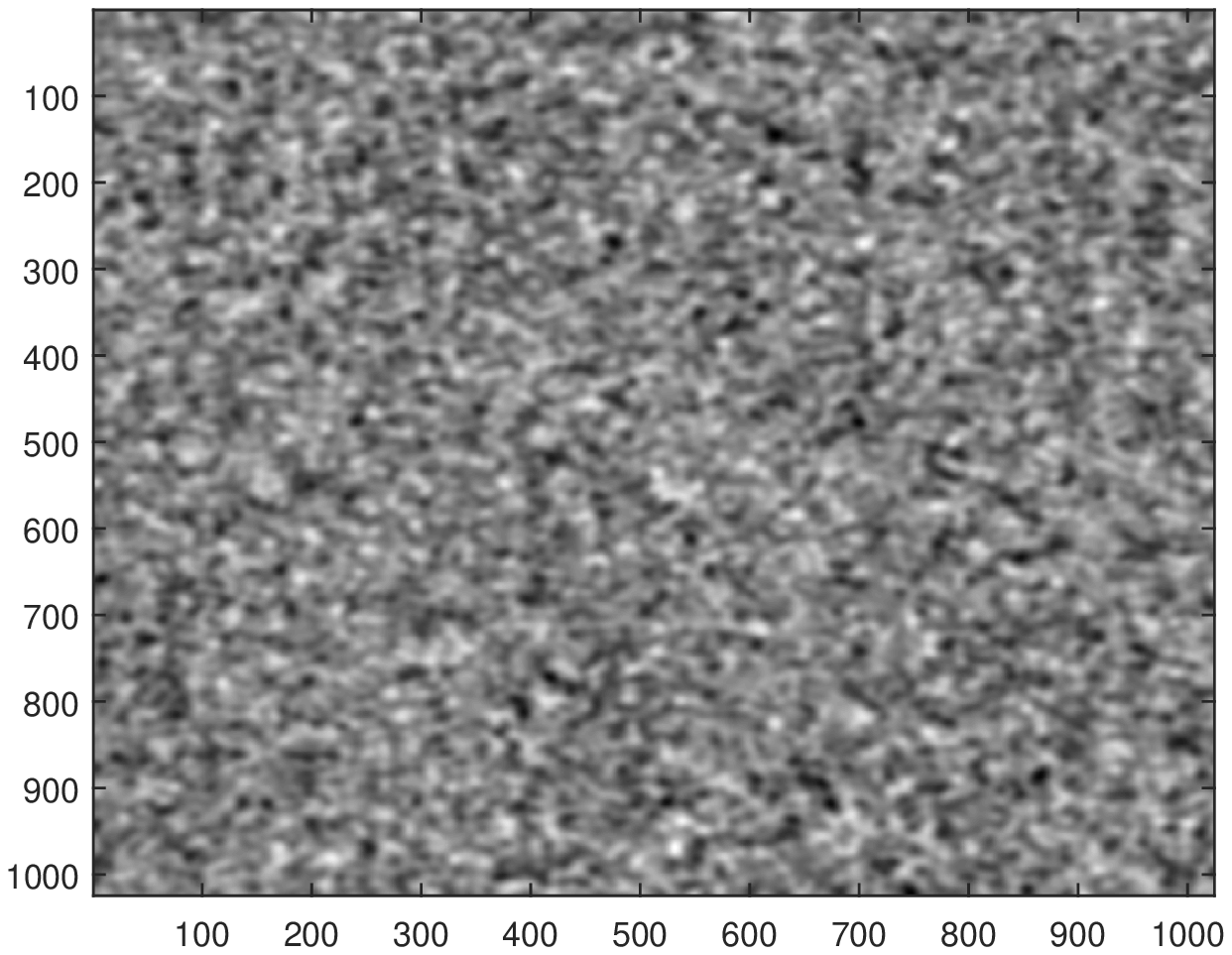}
\includegraphics[width=4.4cm, height=4.5cm]{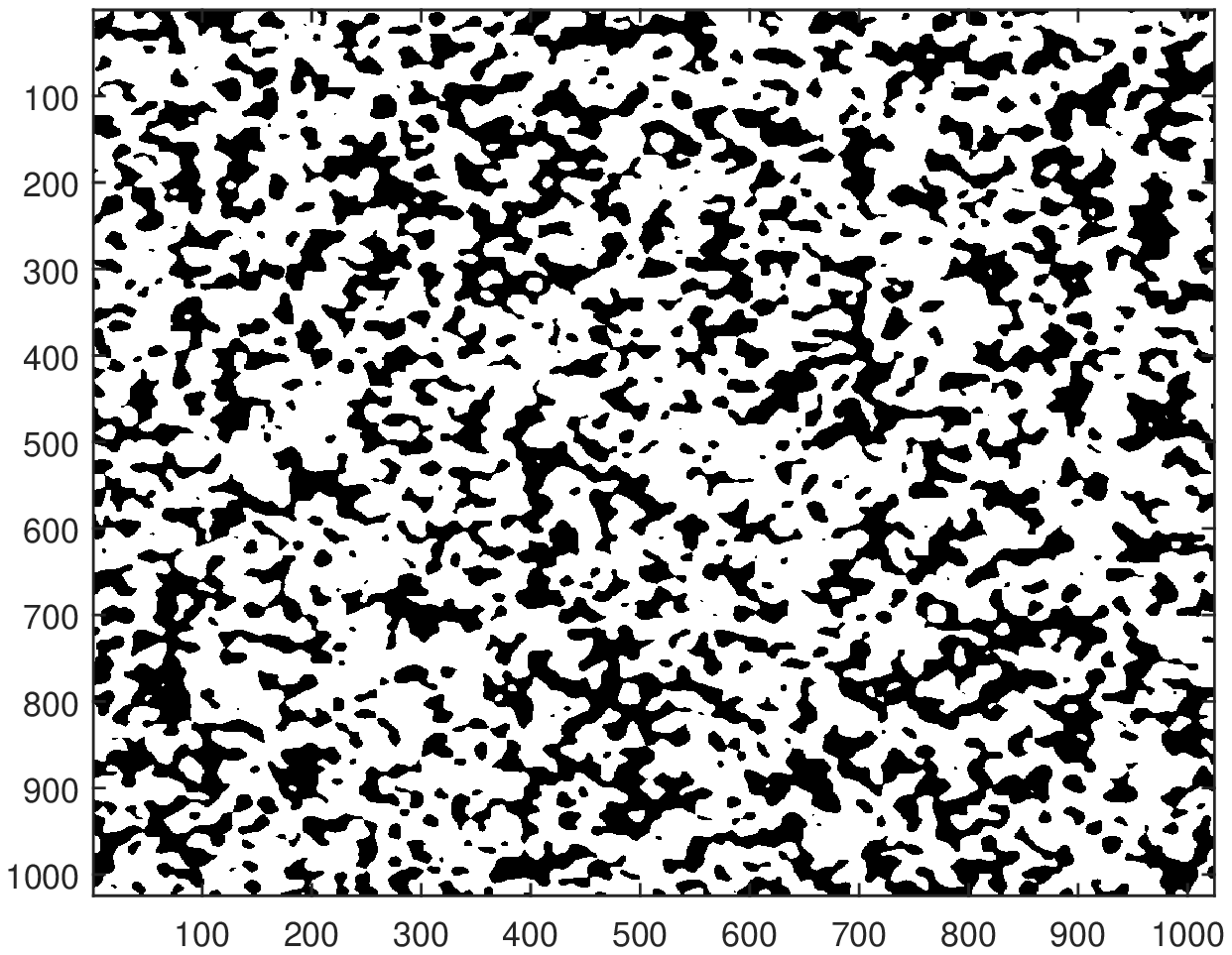}
\includegraphics[width=4.4cm, height=4.5cm]{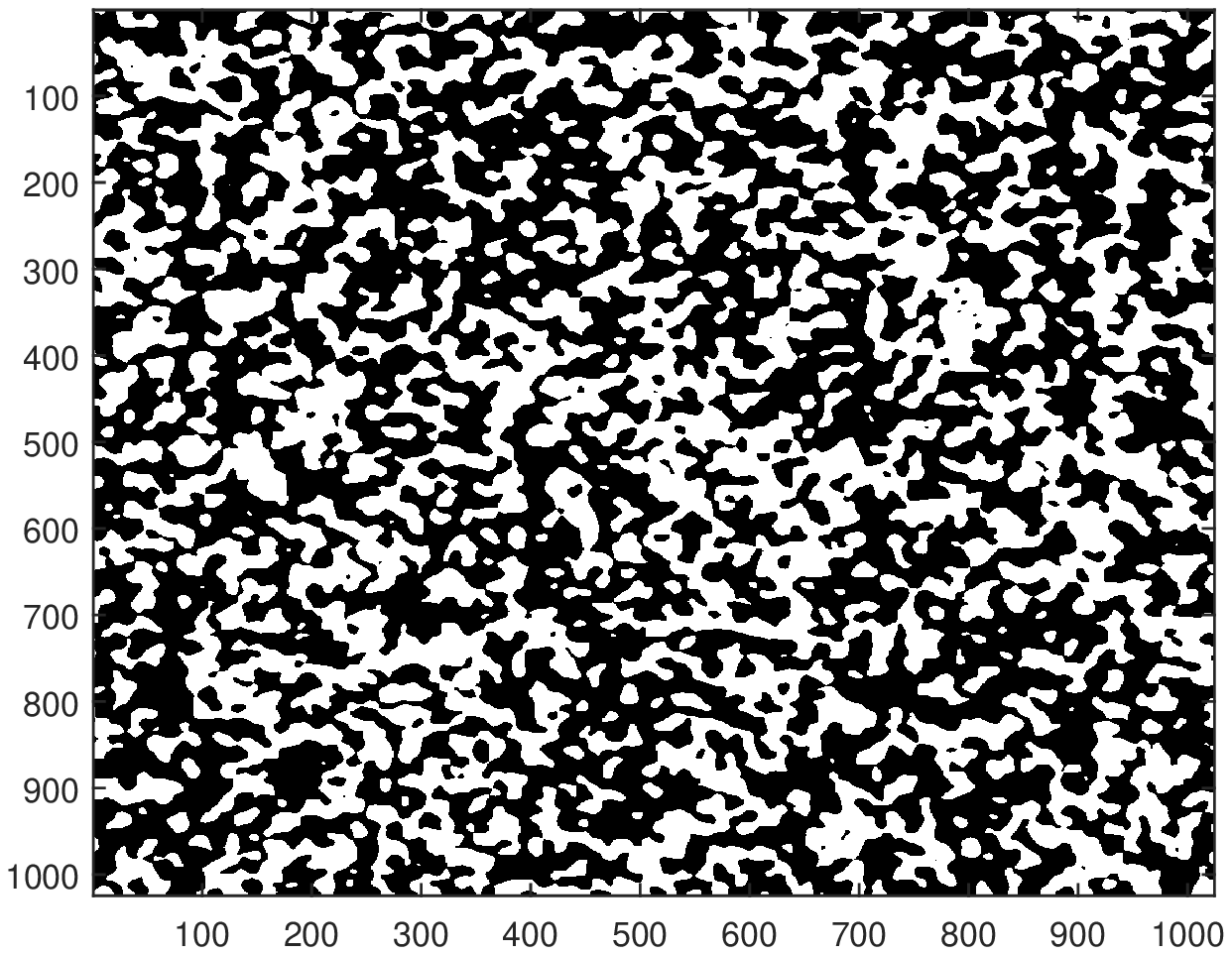}
  \vspace{-0.4cm}
\caption{\textbf{Gaussian random field and its perturbed counter-part} as in Definitions \ref{defFGT} and \ref{MODELadditive} with covariance $r(t) = \sigma_g^2 \, \rme^{-\kappa^2 \parallel t \parallel^2}$,   for $\sigma_g = 2$, $\kappa = 100/2^{10}$ in a domain of size $2^{10} \times 2^{10}$ pixels, with $\epsilon =1$ and $X \sim t(\nu=5)$. \textbf{First row}:  A   realization of Gaussian random field $g$ (left) and the  two  associated excursion sets for $u = 0$ (center) and $u = 1$ (right).   \textbf{Second row}: The   associated  realization of a perturbed Gaussian random field $f$ (left) and two excursion sets for $u = 0$ (center) and $u = 1$ (right).
 }\label{campi}
\end{figure}

\subsection{Mean LK curvatures of excursion sets of perturbed  Gaussian  model}\label{LKmean}

Let $g$ be as in Definition  \ref{defFGT}. The Gaussian kinematic formula provides the mean LK curvatures of excursion sets of $g$ within a rectangle $T$ (see, \textit{e.g.}, Theorem 13.2.1 in \citet{AT07} or Theorem 4.3.1 in \citet{AT11}), for $u\in\R$,
\begin{eqnarray}\label{eqGauss}
\mathbb E[C^{/T}_2(g,u)] &=& \Psi\left( \frac{u}{\sigma_g} \right);\cr
\mathbb E[C^{/T}_1(g,u)] &=& \Psi\left( \frac{u}{\sigma_g} \right) \frac{|\partial T|_1}{2|T|} - \frac{\sqrt{2\pi \lambda}}{4\, \sigma_g} \, \Psi'\left ( \frac{u}{\sigma_g}  \right );\cr
\mathbb E[C^{/T}_0(g,u)] &=& \Psi\left( \frac{u}{\sigma_g} \right) \frac{1}{|T|}  - \sqrt{\frac{\lambda}{2\pi\, \sigma_g}}\, \Psi'\left ( \frac{u}{\sigma_g}  \right )  \frac{|\partial T|_1}{2|T|} + \frac{\lambda}{2\pi\, \sigma_g^2} \,\Psi^{''} \left( \frac{u}{\sigma_g} \right),
\end{eqnarray}
being $\lambda$ the second spectral moment of $g$ and $\Psi(x): = \frac{1}{\sqrt{2\pi}} \int_x^{+\infty} \text{e}^{-t^2/2}\,dt$ the  Gaussian tail distribution with zero mean and unit variance.   Then the LK densities for the considered  Gaussian  field are given by
\begin{align}\label{eqGaussD}
C^*_2(g,u) = \Psi\left( \frac{u}{\sigma_g} \right), \quad C^*_1(g,u) = - \frac{\sqrt{2\pi \lambda}}{4\, \sigma_g}  \Psi'\left ( \frac{u}{\sigma_g}  \right ),  \quad
C^*_0(g,u) = \frac{\lambda}{2\pi\, \sigma_g^2} \Psi^{''} \left( \frac{u}{\sigma_g} \right).
\end{align}

\begin{Proposition}[LK curvatures for the perturbed  Gaussian  model]\label{LKadditive}
Let  $f(t) = g(t) + \epsilon \, X$,   $t\in \R^2$ as in Definition \ref{MODELadditive}.
Then, for small $\epsilon$, it holds that
 \begin{align}
\E[C^{/T}_0(f,u)] & =  C_0^*(g,u)\left( 1 +  \frac{\epsilon^2 \mathbb E[X^2]}{2\sigma^2_g} \left(H_2\left(\frac{u}{\sigma_g}\right) -2\right)\right) \label{C0Tadditive}\\
 & +  \frac{1}{\pi} C^*_1(g,u)\left(1+ \frac{\epsilon^2 \E[X^2]}{2 \sigma_g^2}H_2\left(\frac{u}{\sigma_g}\right)\right) \frac{|\partial T|_1}{|T|}    \nonumber \\
  & +    \left(C^*_2(g,u)+  \epsilon^2 \E[X^2] \frac{\pi}{\lambda} C^*_0(g,u)\right)\frac{1}{|T|}+  O\left (\epsilon^3 \left( 1 + \frac{|\partial T|_1}{2|T|}   + \frac{1}{|T|}  \right )\right ), \nonumber  \\
\E[C^{/T}_1(f,u)] & =     C^*_1(g,u) + C^*_2(g,u) \frac{|\partial T|_1}{2|T|} \label{C1Tadditive} \\
& + \epsilon^2 \E[X^2] \left( \frac{C_1^*(g,u)}{2\sigma_g^2} H_2\left(\frac{u}{\sigma_g}\right) + C_0^*(g,u)\frac{\pi }{\lambda}   \frac{|\partial T|_1}{2|T|}\right) 
+   O\left (\epsilon^3 \left( 1 + \frac{|\partial T|_1}{2|T|}    \right ) \right ), \nonumber     \\
\E[C^{/T}_2(f,u)] & = C^*_2(g,u) + \epsilon^2 \E[X^2]   \frac{\pi}{\lambda} C^*_0(g,u) + O(\epsilon^3),    \label{C2Tadditive}
\end{align}
where   $H_2(y)= y^2-1$, for $y \in \R$  (i.e., the second Hermite polynomial) and the constants involved in the $O$-notation only depend on $g$ and $X$.
\end{Proposition}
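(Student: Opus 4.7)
The strategy is a conditioning argument: since $X$ is independent of $g$, given $X=x$ the perturbed field is $f=g+\epsilon x$, so $\{t\in T:f(t)\ge u\}=\{t\in T:g(t)\ge u-\epsilon x\}$ and the three LK curvatures of the $f$-excursion at level $u$ agree \emph{almost surely} with those of the $g$-excursion at the shifted level $u-\epsilon x$. Taking conditional expectation and applying the Gaussian Kinematic Formula (\ref{eqGauss}) for $g$ at the random level $u-\epsilon X$, one gets, for $i=0,1,2$,
\[
\E[C^{/T}_i(f,u)]=\E_X\!\bigl[\E[C^{/T}_i(g,u-\epsilon X)]\bigr],
\]
which reduces the problem to expanding three deterministic functions of $\epsilon X$, namely $\Psi((u-\epsilon X)/\sigma_g)$, $\Psi'((u-\epsilon X)/\sigma_g)$, and $\Psi''((u-\epsilon X)/\sigma_g)$, and then integrating against the law of $X$.

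Next, I would Taylor expand each of these functions around $y_0:=u/\sigma_g$ to second order with Lagrange remainder:
\[
\Psi^{(k)}\!\Bigl(y_0-\tfrac{\epsilon X}{\sigma_g}\Bigr)=\Psi^{(k)}(y_0)-\tfrac{\epsilon X}{\sigma_g}\Psi^{(k+1)}(y_0)+\tfrac{\epsilon^2 X^2}{2\sigma_g^2}\Psi^{(k+2)}(y_0)+R_k(\epsilon,X),
\]
for $k=0,1,2$. The hypothesis $\E[X]=0$ kills every first-order term after averaging, $\E[X^2]$ enters linearly in the second-order term, and the remainder is controlled by $|R_k|\le \tfrac{\epsilon^3 |X|^3}{6\sigma_g^3}\sup_{\xi\in\R}|\Psi^{(k+3)}(\xi)|$. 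Since $\Psi^{(j)}(y)=(-1)^{j-1}H_{j-1}(y)\phi(y)$ for $j\ge 1$, all these suprema are finite, so averaging against $X$ produces an $O(\epsilon^3\E[|X|^3])$ error, matching the $\epsilon^3$ remainders stated (with the $T$-dependent prefactors $1,\ |\partial T|_1/(2|T|),\ 1/|T|$ inherited from the three summands appearing in (\ref{eqGauss})).

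It then remains to identify the constant and $\epsilon^2$ coefficients with the quantities $C_0^*(g,u),C_1^*(g,u),C_2^*(g,u)$. For this I would use the identities
\[
\Psi'(y)=-\phi(y),\quad \Psi''(y)=y\phi(y),\quad \Psi'''(y)=H_2(y)\,\Psi'(y),\quad \Psi^{(4)}(y)=H_2(y)\,\Psi''(y)+\Psi''(y)+\cdots
\]
together with (\ref{eqGaussD}), which allow one to convert $\Psi''(u/\sigma_g)/(2\sigma_g^2)$ into $(\pi/\lambda)C_0^*(g,u)$, and $\Psi'''(u/\sigma_g)$ into $H_2(u/\sigma_g)\,\Psi'(u/\sigma_g)$, hence into $H_2(u/\sigma_g)\cdot(-4\sigma_g/\sqrt{2\pi\lambda})\,C_1^*(g,u)$. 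A careful bookkeeping then reproduces (\ref{C0Tadditive})--(\ref{C2Tadditive}) term by term.

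The main obstacle is not analytic but algebraic: keeping track of the Hermite-polynomial identities relating the derivatives $\Psi^{(j)}(y_0)$, and rewriting them as multiples of $C_0^*(g,u),C_1^*(g,u),C_2^*(g,u)$ with the correct prefactors of $\sigma_g$, $\lambda$ and $|\partial T|_1/|T|$ --- particularly in (\ref{C0Tadditive}) where three terms of the GKF each produce a second-order correction that must be recombined into the stated form. The conditioning step itself is immediate from the independence of $X$ and $g$, and the remainder control only uses boundedness of $H_k\phi$ together with $\E[|X|^3]<\infty$.
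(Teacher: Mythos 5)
Your proposal is correct and follows essentially the same route as the paper: condition on $X$ to reduce to the Gaussian Kinematic Formula at the shifted level $u-\epsilon X$, Taylor expand $\Psi,\Psi',\Psi''$ to second order (the first-order terms vanishing since $\E[X]=0$, the remainder controlled by $\E[|X|^3]$ and the boundedness of $H_k\phi$), and re-express the coefficients via the Hermite identities and \eqref{eqGaussD}. The only cosmetic slip is your formula for $\Psi^{(4)}$, which should read $\Psi^{(4)}(y)=\Psi''(y)H_2(y)+2y\Psi'(y)=\Psi''(y)\left(H_2(y)-2\right)$, exactly as the paper uses it.
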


\begin{proof}[Proof of Proposition \ref{LKadditive}]
Let $a: =\frac{ \lambda}{\sigma_g^2}$. In the following we will use that $\Psi'(x) = - \frac{1}{\sqrt{2\pi}}  \text{e}^{-x^2/2}$, $\Psi^{''}(x) = x \frac{1}{\sqrt{2\pi}}  \text{e}^{-x^2/2}$, $\Psi^{'''}(x) = \Psi'(x) H_2(x)$ and $\Psi^{''''}(x) = \Psi^{''}(x)H_2(x) + 2x\Psi'(x)= \Psi^{''}(x)(H_2(x) - 2)$.
From (\ref{eqGauss}),
Taylor developing the Gaussian tail distribution $\Psi$ and bearing in mind that $X$ is a centered random variable we have
\begin{align}\label{area_pert}
 \E[C^{/T}_2(f,u)] &= \E [ \E[C^{/T}_2(g,u-\epsilon X)| X] ] = \E [\Psi((u-\epsilon X)/\sigma_g)]\nonumber  \\  
 &= \Psi(u/\sigma_g)+ \frac{\epsilon^2}{\sigma_g^2}
\frac{\Psi^{''}(u/\sigma_g)}{2} \E[X^2]  + O\left (\frac{\epsilon^3}{\sigma_g^3} \E[|X|^3]\right),
 \end{align}
 where the constant involved in the O notation is absolute. One can  rewrite \eqref{area_pert}, by using the kinematic formula  for LK densities $C^*_i(g, u)$  of the Gaussian field $g$ in \eqref{eqGaussD}. Hence the result in \eqref{C2Tadditive}.
Analogously,
\begin{align*}
 \E[C^{/T}_1(f,u)]  & =  \E [ \E[C^{/T}_1(g,u-\epsilon X)| X] ] -\frac{\sqrt{a}}{4} \sqrt{2\pi}  \E[\Psi^{'}((u-\epsilon X)/\sigma_g)]\\
 &+ \E[\Psi((u-\epsilon X)/\sigma_g)] \frac{|\partial T|_1}{2|T|}\\
 & = -\frac{\sqrt{a}}{4} \sqrt{2\pi}   \left (  \Psi^{'}(u/ \sigma_g)  + \frac{\epsilon^2}{\sigma_g^2}\frac{\Psi^{'''}(u/ \sigma_g)}{2} \E[X^2]  + O(\epsilon^3) \right ) \\
  & + \left ( C^*_2(g,u) + \epsilon^2 C^*_0(g,u) \frac{\pi  \E[X^2]}{\lambda}  + O(\epsilon^3) \right ) \frac{|\partial T|_1}{2|T|}.
\end{align*}
Then by using the Gaussian LK densities $C^*_i(g, u)$ in \eqref{eqGaussD}, we get  Equation \eqref{C1Tadditive}.
Finally,
\begin{align*}
 \E[C^{/T}_0(f,u)] &= \E [ \E[C^{/T}_0(g,u-\epsilon X)| X] ] \\
 & =   \frac{a}{2\pi} \E[\Psi^{''}((u-\epsilon X)/\sigma_g)] - \sqrt{\frac{a}{2\pi}} \E[\Psi^{'}((u-\epsilon X)/\sigma_g)] \frac{|\partial T|_1}{2|T|} \\
 &+ \E[\Psi(u-\epsilon X)/\sigma_g)] \frac{1}{|T|}\\
  & =  \frac{a}{2\pi}  \left ( \Psi^{''}(u/\sigma_g) + \frac{\Psi^{''''}(u/\sigma_g)}{2} \frac{\epsilon^2}{\sigma_g^2} \E[X^2] + O(\epsilon^3)  \right )\\
 & - \sqrt{\frac{a}{2\pi}}  \left (  \Psi^{'}(u/ \sigma_g)  + \frac{\epsilon^2}{\sigma_g^2}\frac{\Psi^{'''}(u/ \sigma_g)}{2} \E[X^2]  + O(\epsilon^3) \right ) \frac{|\partial T|_1}{2|T|}\\
  & +   \left ( \Psi(u/ \sigma_g) + \frac{\epsilon^2}{\sigma_g^2}
\frac{
\Psi^{''}(u/\sigma_g)}{2} \E[X^2]  + O(\epsilon^3) \right )\frac{1}{|T|}.
\end{align*}
As before, by using  \eqref{eqGaussD}, we get
\begin{align*}
 \E[C^{/T}_0(f,u)] = & \,  C_0^*(g,u)\left ( 1 + \epsilon^2(H_2\left(\frac{u}{\sigma_g}\right) -2) \frac{\mathbb E[X^2]}{2\sigma^2_g} \right )\cr
 &+ \frac{1}{\pi}  C_1^*(g,u)\left (  1 + \epsilon^2 \frac{\mathbb E[X^2]}{2\sigma^2_g}H_2\left(\frac{u}{\sigma_g}\right)         \right ) \frac{|\partial T|_1}{|T|}\cr
 &+ \left (    C_2^*(g,u) + \epsilon^2 \mathbb E[X^2]  \frac{\pi}{\lambda} C_0^*(g,u)         \right ) \frac{1}{|T|} + O\left (\epsilon^3 \left( 1 + \frac{|\partial T|_1}{2|T|}   + \frac{1}{|T|}  \right )\right ).
  \end{align*} 
\end{proof}

\begin{Remark}[Case of additive spatially variant perturbation]\rm
Notice  that the mean of LK curvatures in Proposition \ref{LKadditive} can be derived also in the case of  an additive spatially variant perturbation, \emph{i.e.},  $f(t) = g(t) + \epsilon \, X(t)$, for $ t\in \R^2$ and  $\epsilon >0$,  with $X$ a stationary random field with finite third moment and independent of $g$. The proof comes down in the same way and the results are completely analogous to those in Equations \eqref{C0Tadditive}, \eqref{C1Tadditive} and \eqref{C2Tadditive}. However, the asymptotics results obtained in Section \ref{CLTsection} will become more challenging in that case. Indeed,  even in the classical case of excursion area of Gaussian fields, to the best of our knowledge  we are not aware of any (quantitative) central limit theorem  in the case of a non-constant level.   This could represent an interesting point to investigate in a future work. For sake of completeness, the interested reader is referred to \citet{KratzLeon} where CLT results are obtained for the curve-crossings number of a stationary Gaussian process ($d=1$) according to the form of the moving curve (periodic or linear).
\end{Remark}

\begin{Corollary}[LK densities for the considered  perturbed  Gaussian  model]\label{LKstaradditive}
Under assumption of Proposition  \ref{LKadditive} and using the same notation, it holds that
\begin{eqnarray*}
C^*_0(f, u) & = &  C_0^*(g,u)\left( 1 +  \frac{\epsilon^2 \mathbb E[X^2]}{2\sigma^2_g} \left(H_2\left(\frac{u}{\sigma_g}\right) -2\right)\right ) + O(\epsilon^3),  \\
  C^*_1(f, u) & =  &  C^*_1(g,u)   \left(1 + \frac{\epsilon^2 \E[X^2]}{2\sigma_g^2}   H_2\left(\frac{u}{\sigma_g}\right)\right)   + O(\epsilon^3),\\
C^*_2(f, u) & =  &    C^*_2(g,u) + \epsilon^2 \E[X^2]   \frac{\pi}{\lambda} C^*_0(g,u)    + O(\epsilon^3).\\
\end{eqnarray*}
\end{Corollary}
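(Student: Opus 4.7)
The plan is to obtain the LK densities $C_i^*(f,u)$ for $i=0,1,2$ by passing to the limit $T\nearrow \R^2$ in the expressions of Proposition \ref{LKadditive}, using the definition $C_i^*(f,u):=\lim_{T\nearrow\R^2}\E[C_i^{/T}(f,u)]$.

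First I would recall that, by the Van Hove property noted just after the statement $T^{(N)}=\{Nt:t\in T\}$, one has $|\partial T^{(N)}|_1/|T^{(N)}|\to 0$ and $1/|T^{(N)}|\to 0$ as $N\to\infty$. Therefore every term in the expansions \eqref{C0Tadditive}, \eqref{C1Tadditive}, \eqref{C2Tadditive} that is multiplied by $|\partial T|_1/|T|$ or by $1/|T|$ disappears in the limit. The remaining $T$-independent contributions are precisely those claimed in the statement of the corollary.

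The only subtle point is to control the remainder. The error terms are of the form $O\!\bigl(\epsilon^3(1+|\partial T|_1/(2|T|)+1/|T|)\bigr)$, with implicit constants depending only on $g$ and $X$ (in particular on $\sigma_g$, $\lambda$ and $\E[|X|^3]$), and are independent of $T$. Since the bracketed factor is bounded above along any Van Hove sequence (it tends to $1$), one may safely interchange the limit in $T$ with the $O(\epsilon^3)$ bound, so the remainder survives the limiting procedure as a genuine $O(\epsilon^3)$ term, uniformly in $u$.

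Finally, I would just combine these two observations for each of the three identities separately: \eqref{C2Tadditive} is already $T$-independent and gives the expression for $C_2^*(f,u)$ directly; in \eqref{C1Tadditive} only the two $T$-free summands $C_1^*(g,u)$ and $\epsilon^2\E[X^2]\,C_1^*(g,u)\,H_2(u/\sigma_g)/(2\sigma_g^2)$ survive, which can be regrouped as $C_1^*(g,u)\bigl(1+\epsilon^2\E[X^2]H_2(u/\sigma_g)/(2\sigma_g^2)\bigr)$; and in \eqref{C0Tadditive} only the first line $C_0^*(g,u)\bigl(1+\epsilon^2\E[X^2](H_2(u/\sigma_g)-2)/(2\sigma_g^2)\bigr)$ persists. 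No new estimate is required: the corollary is a direct consequence of Proposition \ref{LKadditive} together with the Van Hove property of the family $\{T^{(N)}\}$, and there is no real obstacle beyond checking the bookkeeping of the error terms.
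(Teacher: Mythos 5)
Your argument is correct and coincides with the paper's own (one-line) justification: the corollary follows from Proposition \ref{LKadditive} by letting $T\nearrow\R^2$ and using the Van Hove property $|\partial T^{(N)}|_1/|T^{(N)}|\to 0$, $1/|T^{(N)}|\to 0$, with the remainder surviving as $O(\epsilon^3)$ because its implicit constant is independent of $T$. Your bookkeeping of which terms vanish and which persist is exactly what the paper intends.
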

The proof of Corollary \ref{LKstaradditive} is based on the property of the VH-growing sequence of rectangles $T$ on $\R^2$.

\begin{Remark}\label{remark:standard} \rm Let $u\in \R$. Notice that $f$  in Definition  \ref{MODELadditive}  is a  standard  random field in the sense of Definition 2.1 in \citet{BDDE}, then   it holds that
\begin{align*}
\E[C_{0}^{/T}(f,u)]&=C_{0}^{\ast}(f,u)+\frac{1}{\pi}C_{1}^{\ast}(f,u)\frac{|\partial T|_{1}}{|T|}+C_{2}^{\ast}(f,u)\frac{1}{|T|},\\
\E[C_{1}^{/T}(f,u)]&=C_{1}^{\ast}(f,u)+\frac12 C_{2}^{\ast}(f,u)\frac{|\partial T|_{1}}{|T|},\\
\E[C_{2}^{/T}(f,u)]&=C_{2}^{\ast}(f,u).
\end{align*}
As a product one can build the following  unbiased estimator of $C_i^{\ast}(f,u),\ i=0,1,2$
\begin{align}
\widehat{C}_{0,T}(f,u)&=C^{/T}_0(f,u)-\frac{|\partial T|_1}{\pi |T|}C^{/T}_1(f,u)+\left(\frac{1}{2\pi}\left(\frac{|\partial T|_1}{|T|}\right)^2-\frac{1}{|T|}\right)C^{/T}_2(f,u), \label{eq:UestC0}\\
 \widehat{C}_{1,T}(f,u)&=C^{/T}_1(f,u)-\frac{|\partial T|_1}{2 |T|}C^{/T}_2(f,u),\label{eq:UestC1}\\
\widehat{C}_{2,T}(f,u)&=C^{/T}_2(f,u) \label{eq:UestC2}.
 \end{align}
 \end{Remark}

An illustration for the finite sample performance of the proposed three unbiased  estimators $\widehat{C}_{0, T}(g, u)$,  $\widehat{C}_{1, T}(g, u)$  and  $\widehat{C}_{2,  T}(g, u)$ obtained by adapting  Equations  \eqref{eq:UestC0}-\eqref{eq:UestC2} to $g$,   is given in Figure \ref{figureLKGAUSSIAN}. In this case, $g$ has a covariance function  $r(t) = \sigma_g^2 e^{-\kappa^2 \parallel t \parallel^2}$. Analogously, a good statistical performance of  $\widehat{C}_{0, T}(f, u)$,  $\widehat{C}_{1, T}(f, u)$  and  $\widehat{C}_{2,  T}(f, u)$ in Equations  \eqref{eq:UestC0}-\eqref{eq:UestC2}  can be observed  in   Figure \ref{figureLKperturbed}.  \\

\begin{figure}[H] 
\includegraphics[width=4.4cm, height=4.5cm]{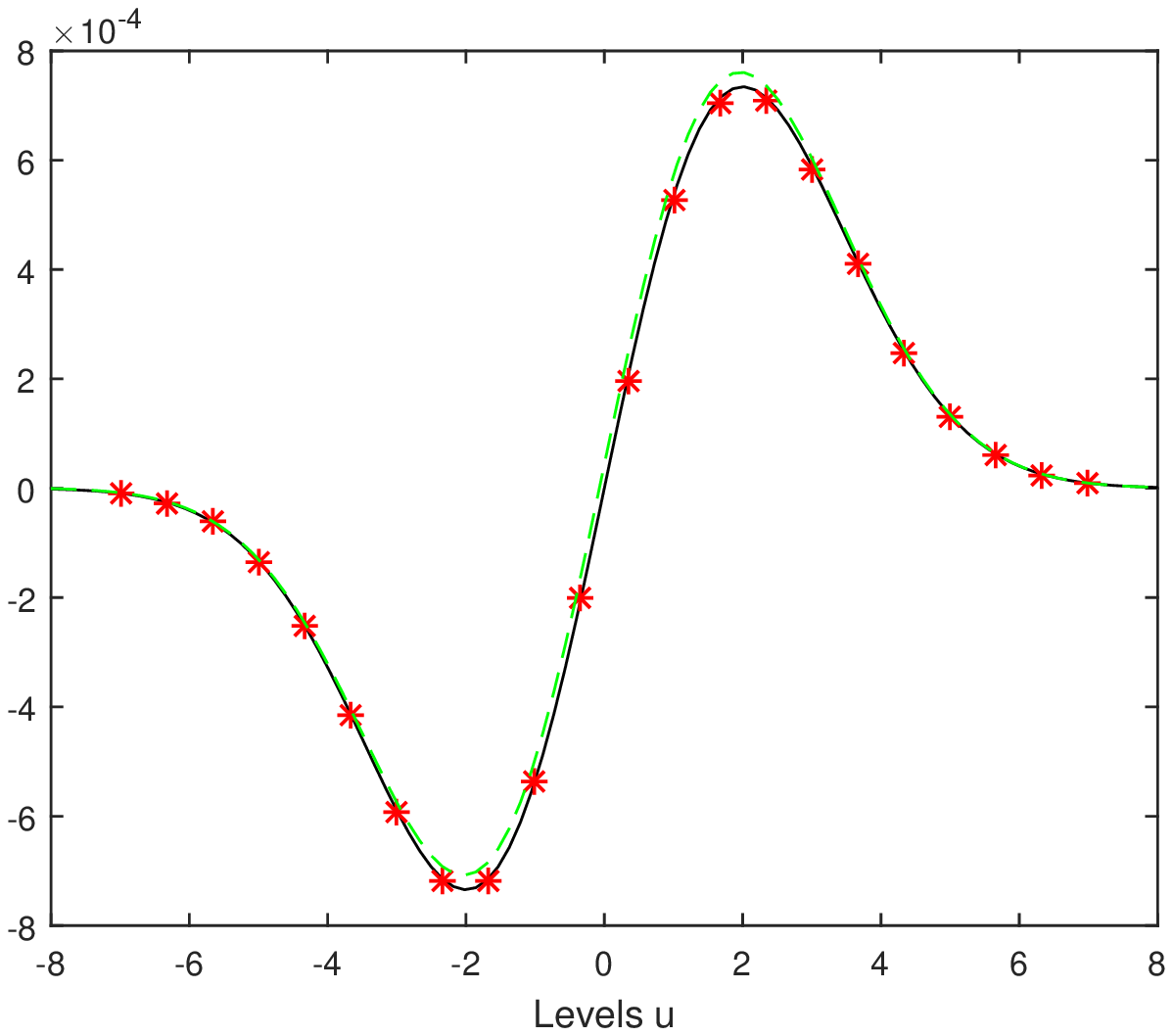}
\includegraphics[width=4.4cm, height=4.5cm]{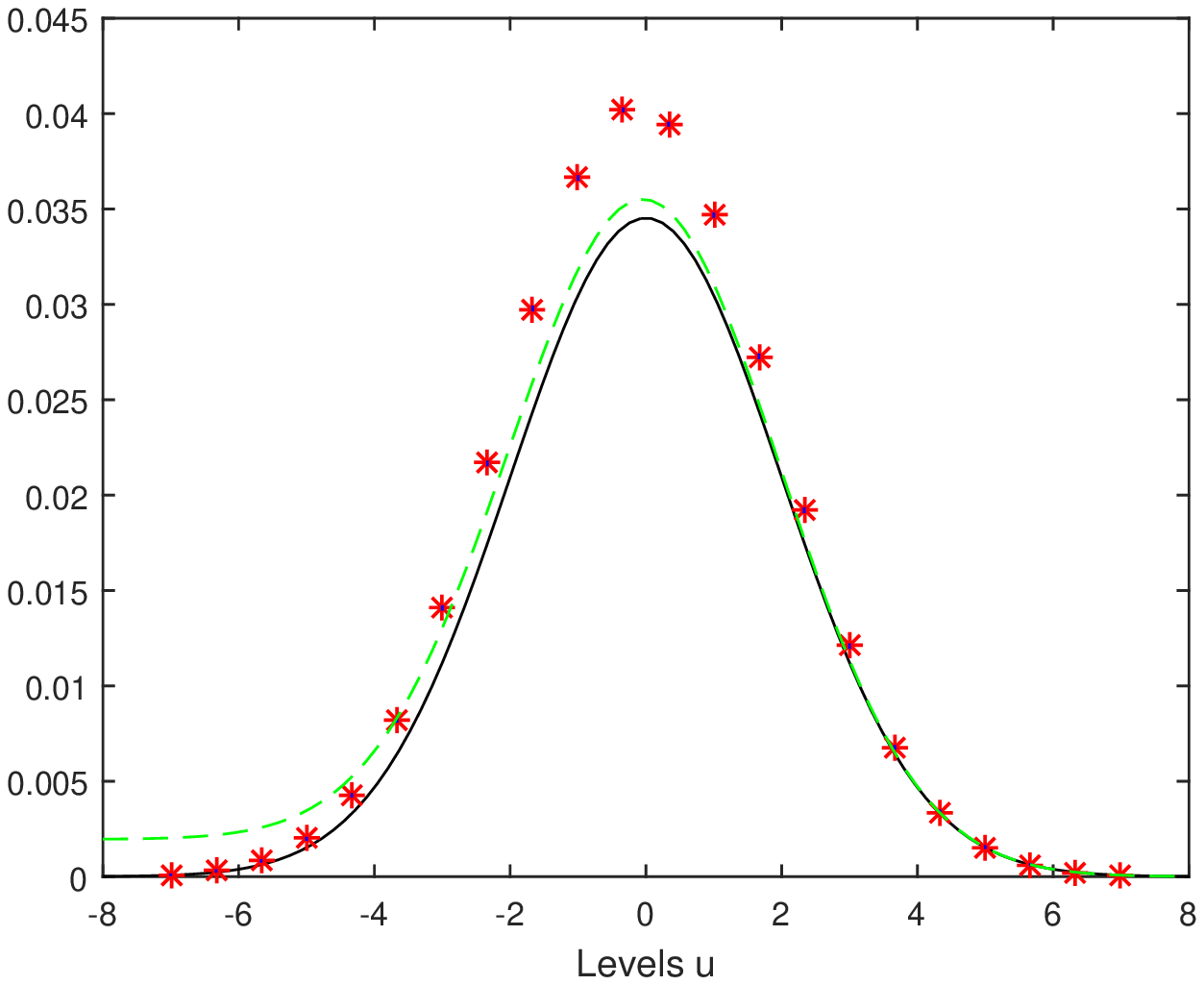}
\includegraphics[width=4.4cm, height=4.5cm]{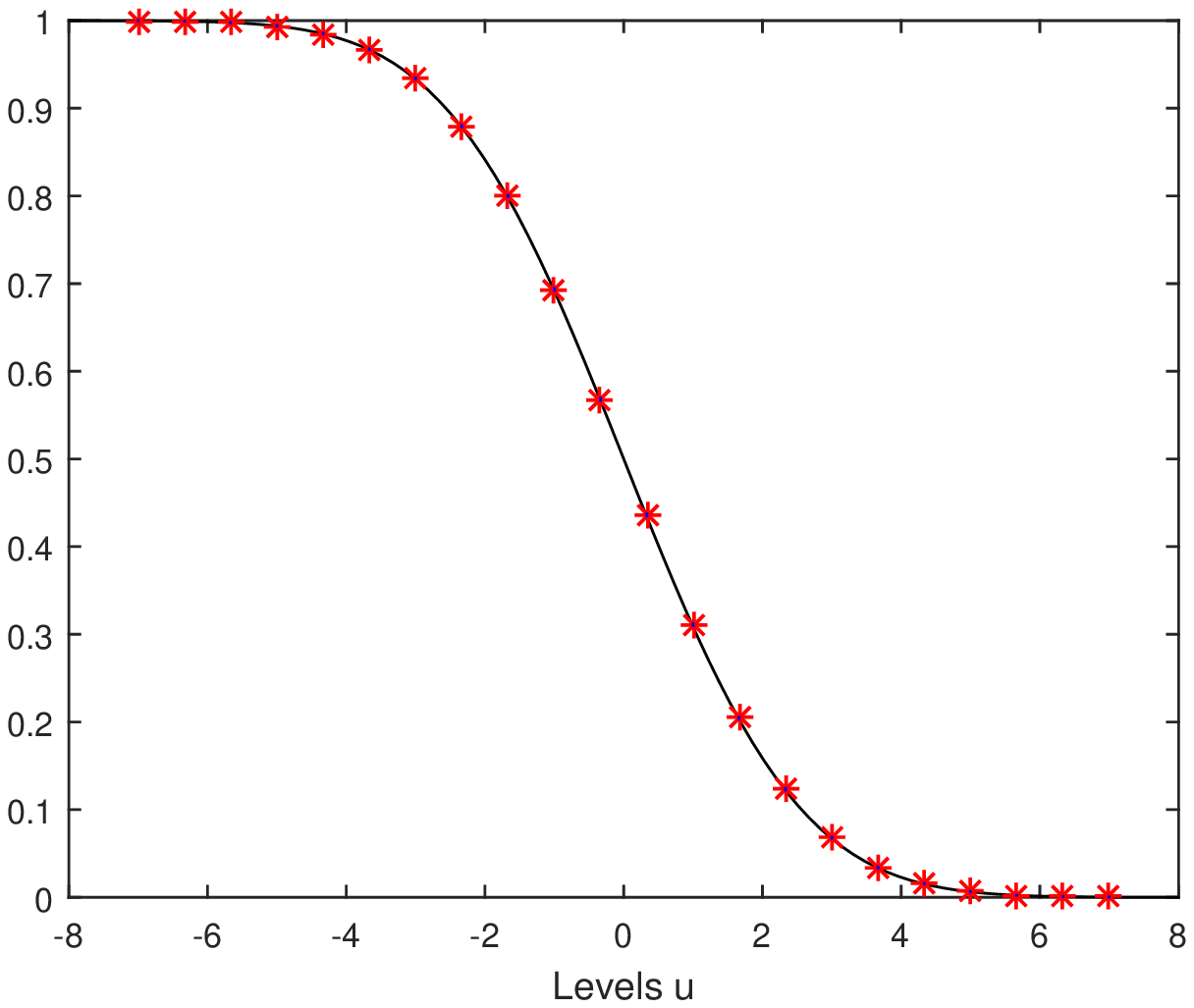}
  \vspace{-0.1cm}
\caption{\textbf{Gaussian random field} as in Definition \ref{defFGT} with covariance $r(t) = \sigma_g^2 e^{-\kappa^2 \parallel t \parallel^2}$, for $\sigma_g = 2$, $\kappa = 100/2^{10}$ in a domain of size $2^{10} \times 2^{10}$ pixels.  Theoretical $u\mapsto C^{\ast}_{0}(g, u)$ (left panel), $C^{\ast}_{1}(g, u)$ (center panel)  and $C^{\ast}_{2}(g, u)$ (right panel)  in \eqref{eqGaussD} are drawn in black lines.
We also display with red stars the averaged values on $M=100$  sample simulations of $\widehat{C}_{0, T}(g, u)$ (left panel),  $\widehat{C}_{1, T}(g, u)$ (center panel) $\widehat{C}_{2,  T}(g, u)$ (right panel),  obtained by adapting to $g$ estimators in  \eqref{eq:UestC0}-\eqref{eq:UestC2},  as a function of the level~$u$.  The  empirical intervals associated to the estimation of $\widehat{C}_{i,  T}(g, u)$, for $i=0,1,2$ are given by using red vertical lines.   These samples  have been obtained with \texttt{Matlab} using circulant embedding matrix.}\label{figureLKGAUSSIAN}
\end{figure}

The quantities $C^{/T}_0$, $C^{/T}_1$ and  $C^{/T}_2$ in   \eqref{eq:UestC0}-\eqref{eq:UestC2} are computed with the \texttt{Matlab} functions \texttt{bweuler},  \texttt{bwperim} and \texttt{bwarea}, respectively.   When it is required to specify the connectivity, we average between the 4th and the 8th connectivity. Since $C^{\ast}_{1}$ is defined as the average half perimeter, we divide by 2 the output derived from \texttt{bwperim}. From a numerical point of view, \texttt{bweuler} and \texttt{bwarea} functions seem very precise contrary to the  \texttt{bwperim} function which  performs less well (see center panels in Figures \ref{figureLKGAUSSIAN} and \ref{figureLKperturbed}). It was expected due to the pixelisation effect.  \\

Figures \ref{figureLKGAUSSIAN} and \ref{figureLKperturbed} (center) illustrates that $C^{/T}_1$ (green dashed line) does not well approximate $C^{\ast}_{1}$ (black plain line), especially for small levels $u$ and that the correction induced by $\widehat{C}_{1, T}$ (red stars)  in Remark \ref{remark:standard} improves the approximation. In Figures \ref{figureLKGAUSSIAN} and \ref{figureLKperturbed} (left), we provide an analogous bias correction   for the Euler characteristic  by using $\widehat{C}_{0, T}$ in Remark \ref{remark:standard}. However  in this case, the discrepancy  is less evident than in the perimeter case.   Finally, in  Figure \ref{figureLKperturbed}, we also display the functions $u \mapsto C^{\ast}_{i}(g, u)$, for $i=0,1,2$, by using blue dashed lines. These functions could be used as reference values to visually appreciate the discrepancy between the considered geometrical characteristics of the excursion sets of the Gaussian model (blue dashed lines) and the perturbed one (black plain lines for $C^{\ast}_{i}(f, u)$,  red stars for $\widehat{C}_{i, T}(f, u)$, for $i=0,1,2$).\\

\begin{figure}[H] 
\includegraphics[width=4.4cm, height=4.5cm]{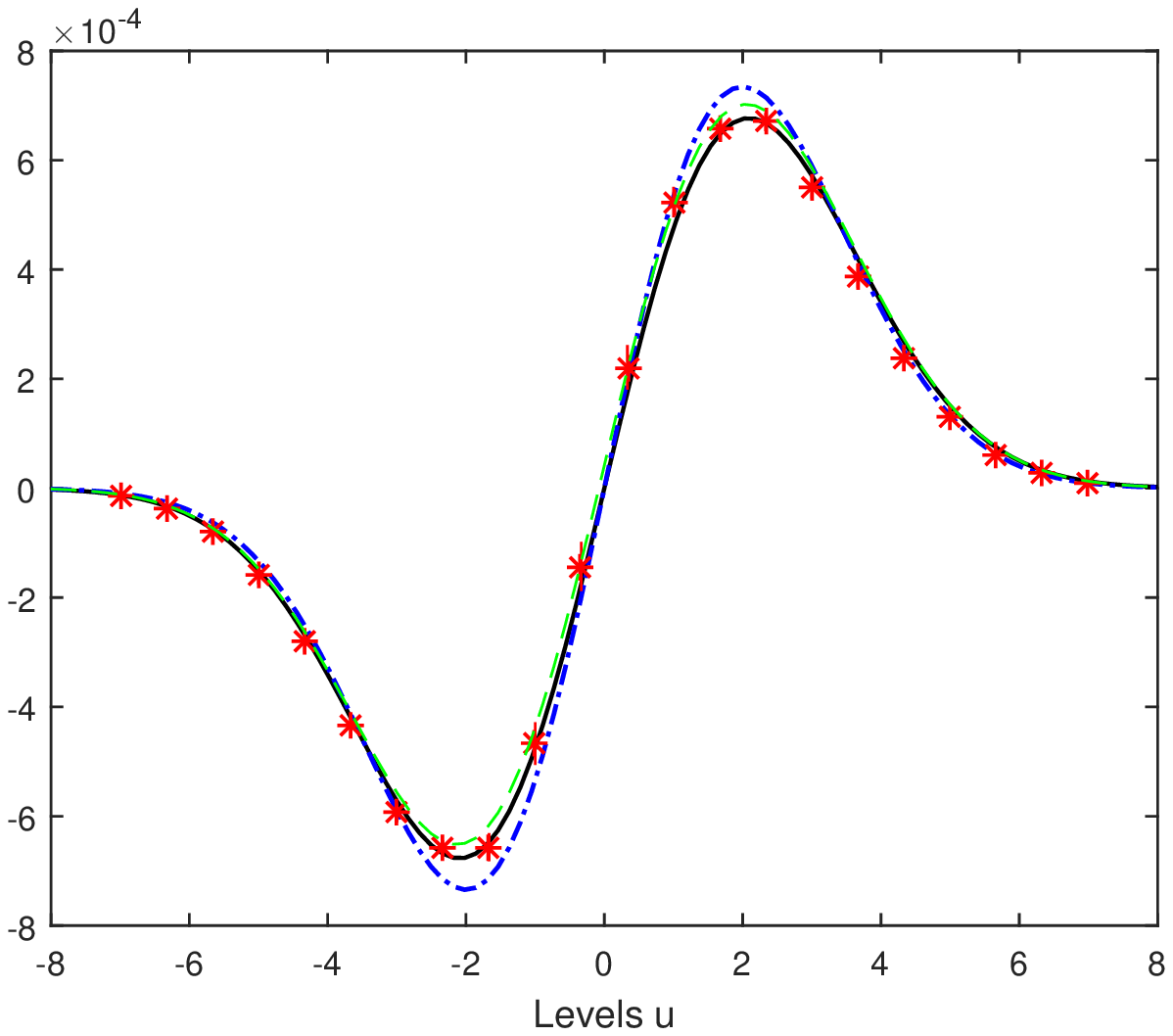}
\includegraphics[width=4.4cm, height=4.5cm]{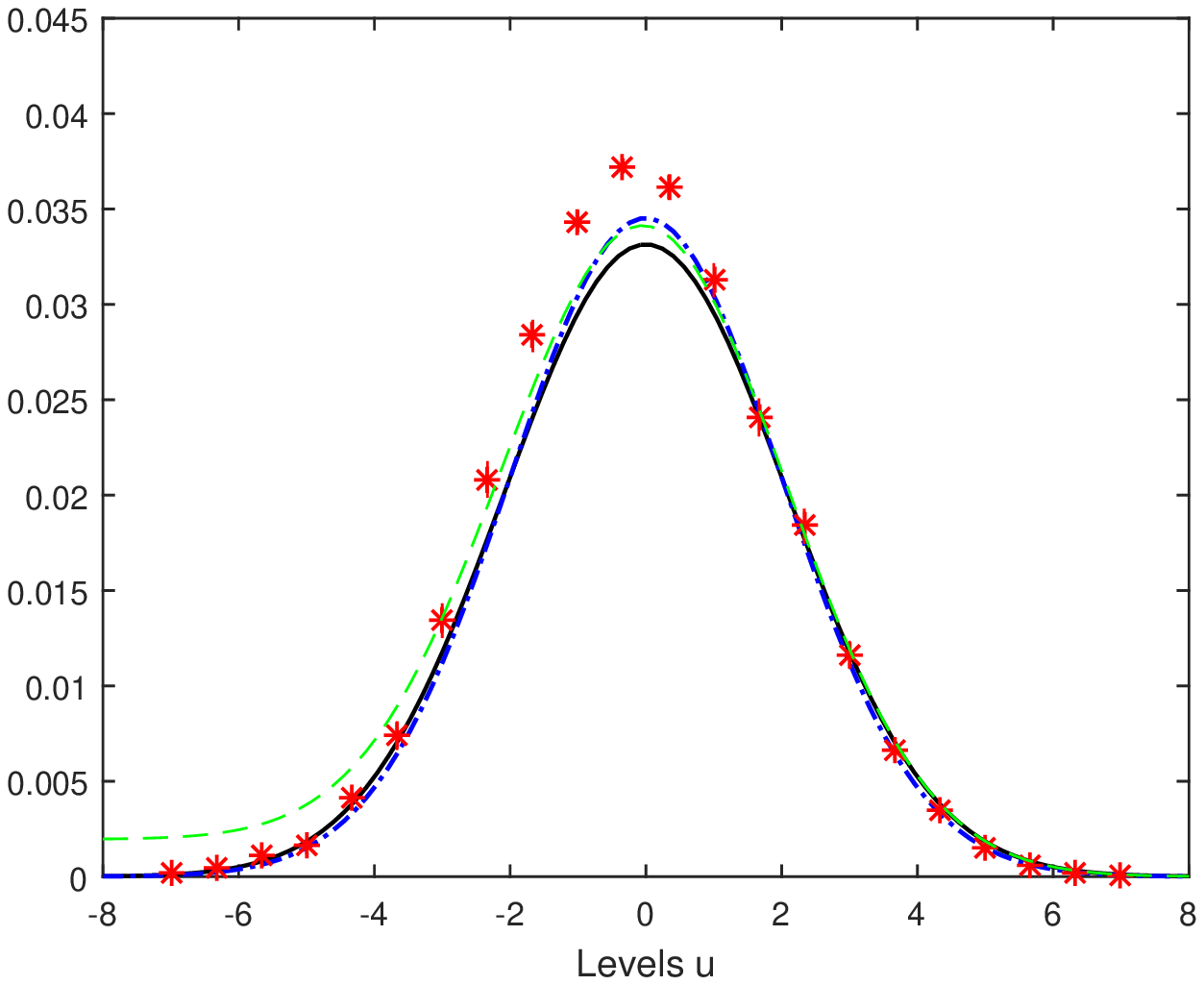}
\includegraphics[width=4.4cm, height=4.5cm]{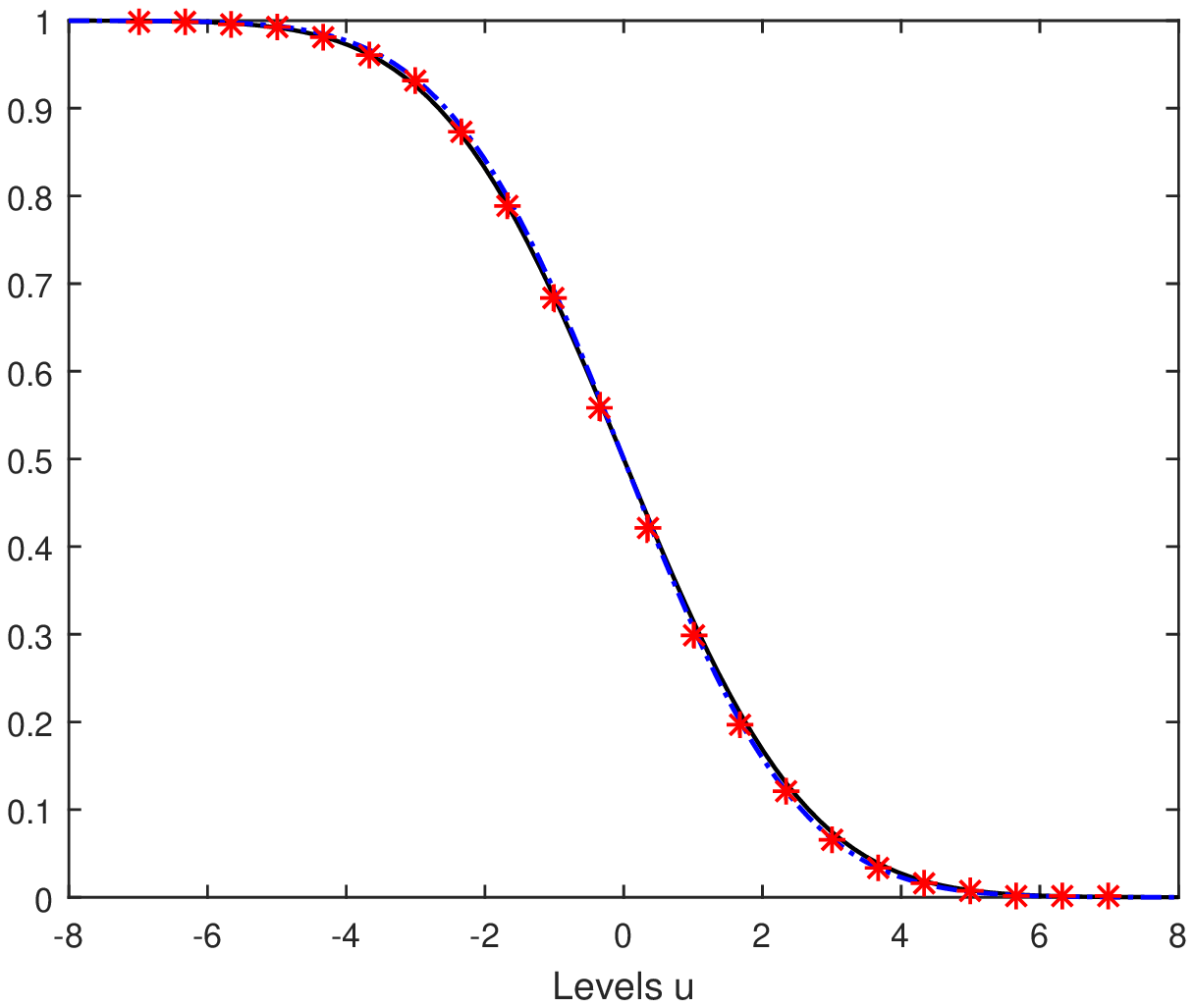}\\
\includegraphics[width=4.4cm, height=4.5cm]{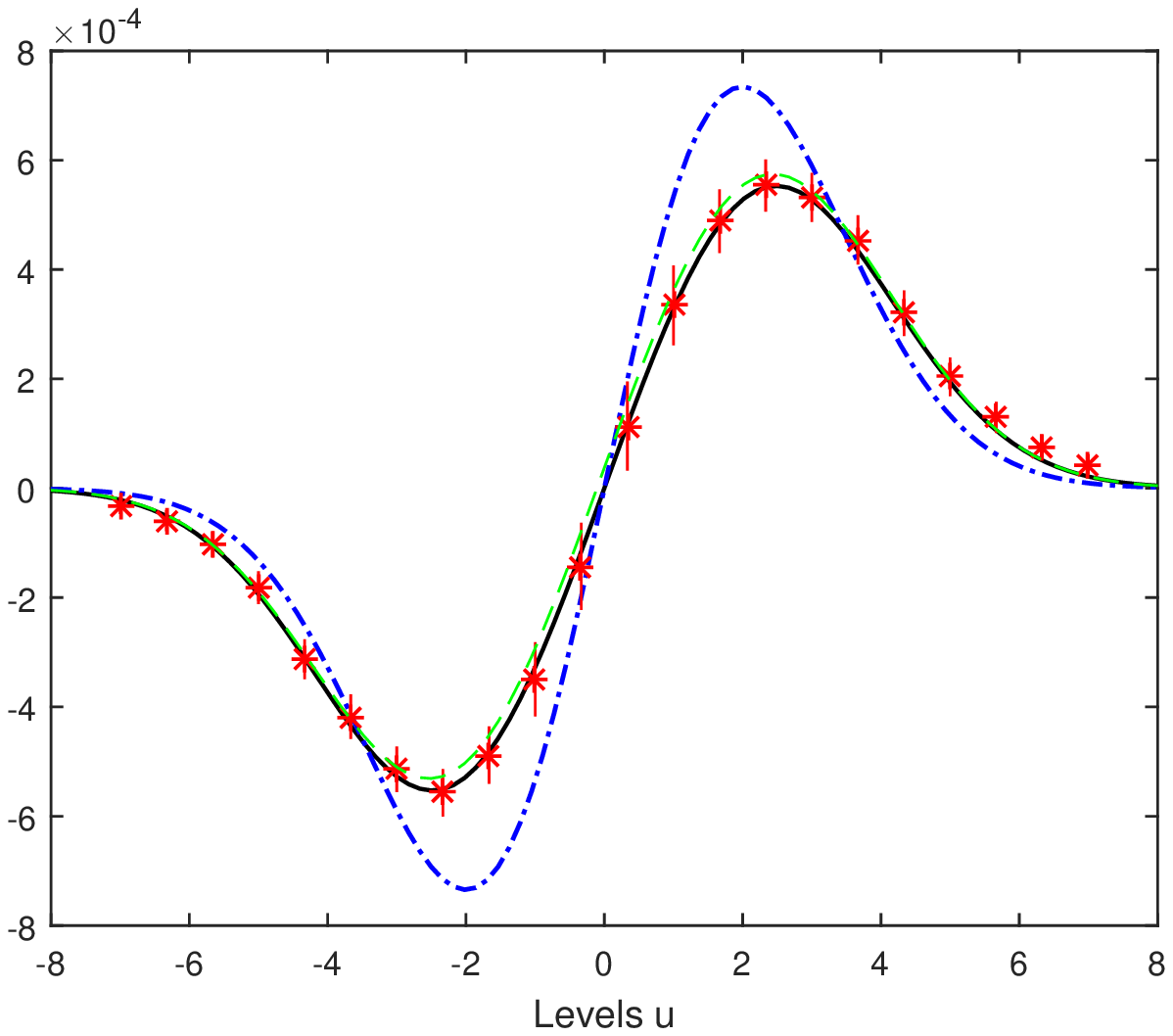}
\includegraphics[width=4.4cm, height=4.5cm]{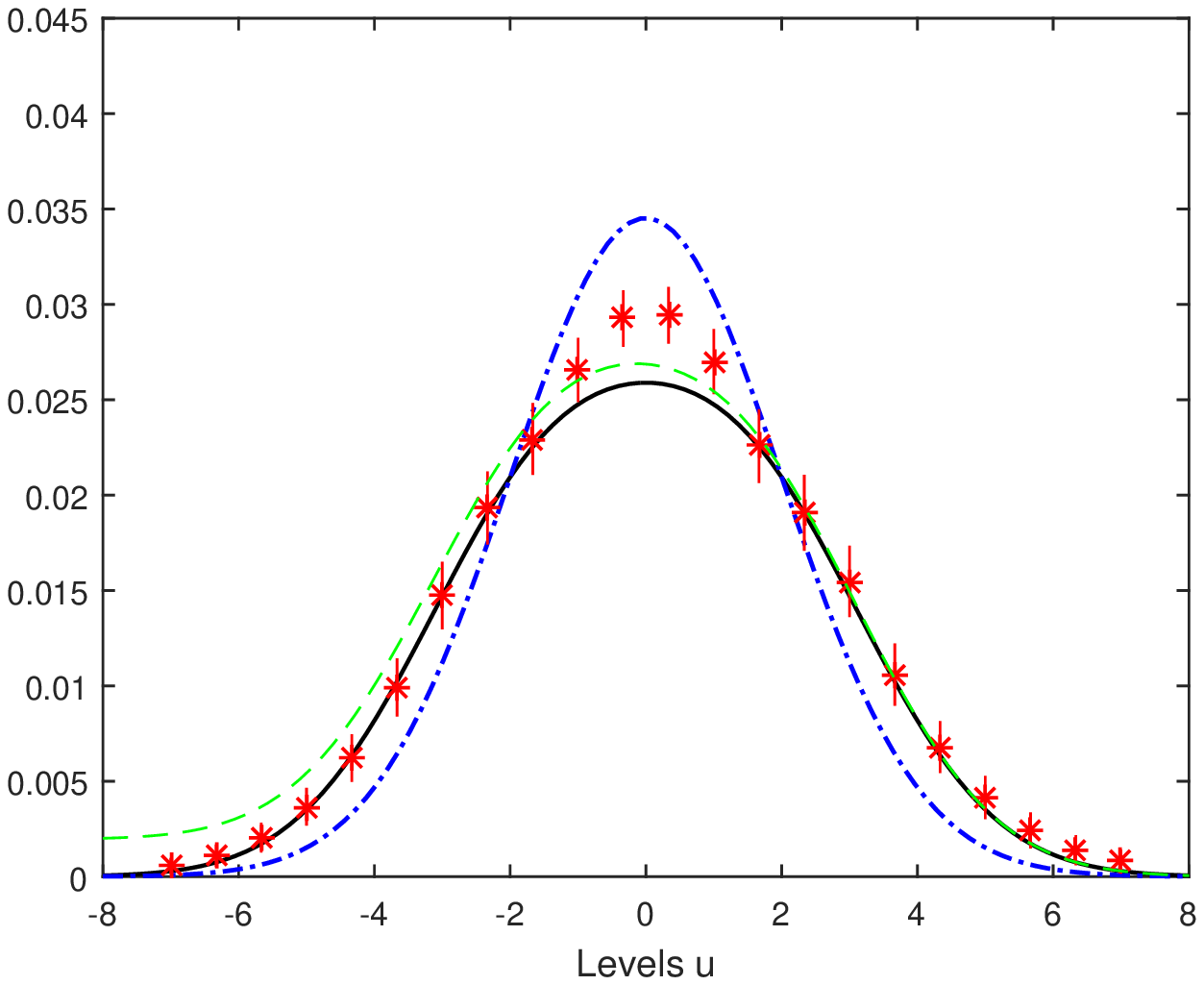}
\includegraphics[width=4.4cm, height=4.5cm]{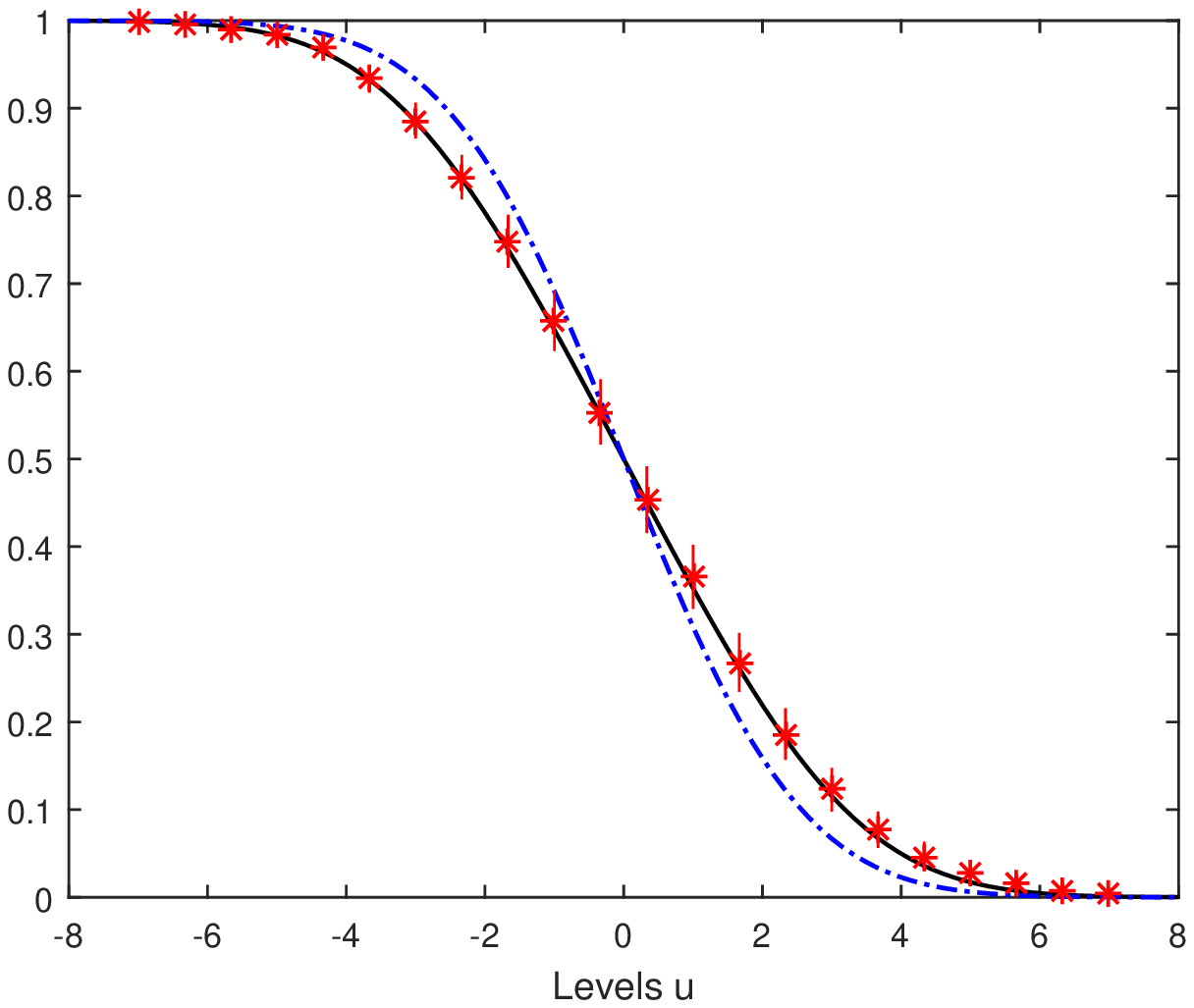}
  \vspace{-0.1cm}
\caption{\textbf{Perturbed Gaussian random field} as in Definition \ref{MODELadditive} with covariance $r(t) = \sigma_g^2 e^{-\kappa^2 \parallel t \parallel^2}$, for $\sigma_g = 2$, $\kappa = 100/2^{10}$ in a domain of size $2^{10} \times 2^{10}$ pixels, with  \textbf{$\epsilon =0.4$ and $X \sim t(\nu=5)$}  (\textbf{first row});  \textbf{$\epsilon = 1$ and $X \sim t(\nu=5)$} (\textbf{second row}).      Theoretical $u\mapsto C^{\ast}_{0}(f, u)$ (left panel), $C^{\ast}_{1}(f, u)$ (centered panel)  and $C^{\ast}_{2}(f, u)$ (right panel)  in Corollary \ref{LKstaradditive}  are drawn in black  plain  lines  and relative $u\mapsto C^{\ast}_{0}(g, u)$, $C^{\ast}_{1}(g, u)$  and $C^{\ast}_{2}(g, u)$ in blue dashed lines. We also  present   $u\mapsto C^{/T}_0(f, u)$ and $C^{/T}_1(f, u)$ in green dotted lines (left and center panels).
We also display the averaged values on $M=100$  sample simulations of $\widehat{C}_{0, T}(f, u)$ (left panel),  $\widehat{C}_{1, T}(f, u)$ (centered panel) $\widehat{C}_{2,  T}(f, u)$ (right panel)  in  \eqref{eq:UestC0}-\eqref{eq:UestC2},   as a function of the level~$u$  by using red stars.  The   empirical intervals associated to the estimation of $\widehat{C}_{i,  T}(f, u)$, for $i=0,1,2$ are given by using red vertical lines.   These samples  have been obtained with \texttt{Matlab} using circulant embedding matrix.}\label{figureLKperturbed}
\end{figure}

Conversely  to Figure \ref{campi} where the quantification of the perturbation was hard to get, by providing this image processing based on the LK curvatures,  we are now able to precisely measure the  impact of the perturbation.
Furthermore,  the contiguity of the Gaussian model $g$ with respect to the perturbed one $f$ can be observed in their LK curvatures when the magnitude of the perturbation  decreases, \emph{i.e.},  $\epsilon \rightarrow 0$ (see   Figure \ref{figureLKperturbed}: first row  with   $\epsilon = 0.4$,   second row with $\epsilon =1$).

\section{Asymptotics for the excursion area of perturbed Gaussian fields}\label{CLTsection}

Recall that $C^{/T}_2(f,u)=C^{/T}_2(g,u-\epsilon X)$ and that $\E[C^{/T}_2(g,u)]=C^*_2(g,u)=\Psi(u/\sigma_g)$.
We are interested in the asymptotic distribution as $T \nearrow \R^2$ of
\begin{equation} \label{YTu}
Y^{\epsilon}_T(u):=|T|^{1/2}\,\left(C^{/T}_2(f,u)-\E[C^{/T}_2(f,u)]\right).
\end{equation}

Considering the unperturbed case, by Theorem 3 in \citet{bulinski2012central} for instance, we know that for a Gaussian field $g$ as in Definition \ref{defFGT}  and for any $u\in \R$, the following convergence in distribution holds,
\begin{align} \label{TCLarea}
|T|^{1/2}\,\left(C^{/T}_2(g,u)-C^*_2(g,u)\right) \overset{d}{\underset{T\nearrow\R^2}{\longrightarrow}} \mathcal N(0, v(u)),
\end{align}
with
\begin{align}\label{vu}
v(u) = \frac{1}{2 \pi}\int_{\R^2} \int_0^{\rho(t)} \frac{1}{\sqrt{1-r^2}} \exp\left\{- \frac{u^2}{\sigma^2_g(1+r)}\right\} \rmd r\,\rmd t\end{align}
and $\rho(t) := corr(g(0), g(t))=r(t)/\sigma_g^2$. The interested reader is also referred to  \citet{KV16} and  \citet{Mu16}.\\

Actually, we are able to state a more powerful result. It is given in the next lemma where the convergence in \eqref{TCLarea} is proved to be uniform with respect to level $u$. In order to formulate our result, let us introduce the usual  Wasserstein distance between  random variables $Z_1$ and $Z_2$:
\begin{align*} 
d_W(Z_1, Z_2) =  \sup_{h\in   \mathcal H} \mathbb |\E[h(Z_1)] -\E[h(Z_2)]|,
\end{align*}
where $\mathcal H$ denotes the set of Lipschitz functions whose Lipschitz constant is $\le 1$.

\begin{Lemma} \label{lemunif}
Let $g$ be a Gaussian field as in Definition \ref{defFGT}. Then,
\[d_W \left (|T|^{1/2}\,\left(C^{/T}_2(g,u)-C^{*}_2(g,u)\right), \mathcal N(0,v(u))\right) = O\left ((\log |T|)^{-1/12} \right),\]
where $v(u)$ is defined in \eqref{vu} and the $O$-constant does not depend on the level $u$.
\end{Lemma}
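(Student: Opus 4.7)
The plan is to decompose the centred excursion area in Wiener chaos, truncate, and apply quantitative Stein--Malliavin estimates, arranging the argument so that the $u$-dependence factorises through Hermite coefficients that admit uniform bounds.

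I start with the Hermite expansion of the indicator function. Setting $\bar g := g/\sigma_g$ so that $\bar g$ is centred with unit variance, one has pointwise
\begin{equation*}
\mathbf{1}_{\bar g(t)\ge u/\sigma_g} - \Psi(u/\sigma_g) = \sum_{q\ge 1} a_q(u)\,H_q(\bar g(t)),\qquad a_q(u) := \frac{H_{q-1}(u/\sigma_g)\,\phi(u/\sigma_g)}{q!},
\end{equation*}
where $\phi$ is the standard Gaussian density, hence after integrating over $T$ and rescaling
\begin{equation*}
Y_T^{0}(u) := |T|^{1/2}\bigl(C_2^{/T}(g,u)-C_2^{\ast}(g,u)\bigr) = \sum_{q\ge 1} a_q(u)\,F_q(T),\quad F_q(T):=\frac{1}{|T|^{1/2}}\int_T H_q(\bar g(t))\,dt.
\end{equation*}
The decisive point is that the $u$-dependence lies entirely in the deterministic coefficients $a_q(u)$, while $F_q(T)$ is an element of the $q$-th Wiener chaos of $g$ and does not depend on the level. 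A Plancherel--Rotach bound for probabilists' Hermite polynomials yields $\sup_x |H_q(x)\phi(x)|^2 \le C\,q!\,q^{-1/6}$, whence $\sup_u a_q(u)^2\,q! \le C\,q^{-7/6}$: this estimate is the ultimate source of the exponent $1/12$ in the final rate.

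I then truncate the chaos expansion at a level $Q$ to be tuned with $|T|$ and split via the triangle inequality
\begin{equation*}
d_W\bigl(Y_T^{0}(u),\mathcal N(0,v(u))\bigr) \le \bigl\|Y_T^{0}(u)-S_Q(u)\bigr\|_{L^2} + d_W\bigl(S_Q(u),\mathcal N(0,v_Q(u))\bigr) + \bigl|v_Q(u)-v(u)\bigr|^{1/2},
\end{equation*}
where $S_Q(u):=\sum_{q=1}^Q a_q(u) F_q(T)$ and $v_Q(u)$ denotes the limiting variance of $S_Q$. Using $\operatorname{Var}(F_q(T)) \le q!\int_{\R^2}|\rho|^q \le q!\int_{\R^2}|\rho|^2$, finite thanks to the decay $|r(t)|=O(\|t\|^{-\alpha})$ with $\alpha>2$, the first and third terms are dominated uniformly in $u$ by $C\sum_{q>Q}\sup_u a_q(u)^2 q! \le C\,Q^{-1/6}$, i.e.\ by $C\,Q^{-1/12}$ in $L^2$-norm. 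For the middle term I would invoke the Nourdin--Peccati multivariate Stein--Malliavin bound in Wasserstein distance applied to the vector $(F_1(T),\dots,F_Q(T))$: its contractions reduce to spatial integrals of $\rho^p$ that are $O(|T|^{-1/2})$, with a prefactor depending only combinatorially on $Q$.

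Finally, I balance the two pieces by a slowly growing choice such as $Q\asymp \log|T|$: this annihilates the Stein--Malliavin term (which decays like a polynomial in $Q$ times $|T|^{-1/2}$) and leaves the tail contribution $(\log|T|)^{-1/12}$ as the dominant one, with all bounds uniform in $u$ by construction. The hard part is to track the $Q$-dependence of the constants in the multivariate Stein--Malliavin bound and to verify that the contractions entering it are estimated uniformly in $u$: this last point is transparent because the level only enters through the already-controlled coefficients $a_q(u)$. This technical work is precisely the content of the auxiliary Lemma~\ref{lem_app} in Appendix~\ref{appS}, which the present lemma invokes as a black box.
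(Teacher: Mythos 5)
Your proposal is correct and follows essentially the same route as the paper: the paper's proof of Lemma \ref{lemunif} simply verifies condition \eqref{condPham} and invokes the appendix Lemma \ref{lem_app}, whose proof is exactly the chaos expansion, Hille-type uniform Hermite bound (source of the exponent $1/12$), truncation at $Q\asymp\log|T|$, and Stein--Malliavin estimate that you sketch. The only cosmetic difference is that the paper's prefactor for the truncated chaotic part is $3^{N_T}$ rather than polynomial in $Q$, which is why the truncation level is taken as $(\log T)/4$; this does not affect the argument.
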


\begin{proof}[Proof of Lemma \ref{lemunif}] We apply Lemma \ref{lem_app} that is postponed in the Appendix section since it is of some interest for its own. Indeed, the covariance function of the Gaussian field $g$ as in Definition \ref{defFGT} satisfies assumption in  \eqref{condPham}. Hence, with the notations that are in force, conclusion of Lemma \ref{lem_app} can be rewritten as
\begin{equation}\label{ineqDER}
d_W \left (|T|^{1/2}\,\left(C^{/T}_2(g,u)-C^{*}_2(g,u)\right), \mathcal N(0,\sigma^2(u/\sigma_g))\right) = O\left ((\log |T|)^{-1/12} \right),
\end{equation}
Note that obviously, \eqref{ineqDER} yields the same CLT as \eqref{TCLarea} for the excursion area as $T\nearrow \R^2$, and hence $\sigma^2(u/\sigma_g)$ equals variance $v(u)$ given by \eqref{vu}.
\end{proof}

Let us come back to the study of the asymptotics of $Y^{\epsilon}_T$, defined by \eqref{YTu}. We will use the next decomposition
\begin{align}
Y^{\epsilon}_T(u)&=|T|^{1/2}\left(C^{/T}_2(f,u)-C^*_2(g,u-\epsilon X) \right)
+|T|^{1/2}\left(C^*_2(g,u-\epsilon X)-\E[C^*_2(g,u-\epsilon X)] \right) \nonumber\\
&=: Z^{\epsilon}_T(u) + R^{\epsilon}_T(u).\label{Y=Z+R}
\end{align}

\subsection{Asymptotics for fixed small $\epsilon$ and $T\nearrow \R^2$}\label{asymtoticSmallEpsilon}

In this section, we introduce a non Gaussian random variable that we denote by $\Theta_\epsilon(u)$. We firstly provide an upper-bound for the Wasserstein distance between $Z^{\epsilon}_{T}(u)$  in \eqref{Y=Z+R}  and  $\Theta_\epsilon(u)$. Secondly, we describe the form of the  density of $\Theta_\epsilon(u)$ by providing a Taylor expansion for small $\epsilon>0$.

\begin{Theorem}[Quantitative asymptotics for $Z^{\epsilon}_{T}(u)$]\label{quantitative}
Let  $f(t) = g(t) + \epsilon \, X$,   $t\in \R^2$ as in Definition \ref{MODELadditive}.\\
For any fixed $\epsilon>0$ and $u\in \R$, we consider $\Theta_\epsilon(u)$ a random variable whose conditional distribution given $\{X=x\}$ is centered Gaussian with variance $v(u-\epsilon x)$, $v(\cdot)$ being defined by \eqref{vu}.
Then, as $T\nearrow \R^2$, it holds that
\begin{equation*} 
d_W(Z^{\epsilon}_{T}(u), \Theta_\epsilon(u)) = O \left((\log |T|)^{-1/12} \right ),
\end{equation*}
where the constant involved in the $O$-notation depends neither on $\epsilon$ nor on $u$.
\end{Theorem}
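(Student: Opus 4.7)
My plan is to reduce the quantitative statement to Lemma \ref{lemunif} by conditioning on the random perturbation $X$. Since $g$ and $X$ are independent, the natural identification
\[
Z^{\epsilon}_T(u)=|T|^{1/2}\bigl(C^{/T}_2(g,u-\epsilon X)-C^{*}_2(g,u-\epsilon X)\bigr)
\]
shows that, given $\{X=x\}$, the random variable $Z^{\epsilon}_T(u)$ has the same law as $|T|^{1/2}\bigl(C^{/T}_2(g,u-\epsilon x)-C^{*}_2(g,u-\epsilon x)\bigr)$, that is, the law of the centred Gaussian excursion area at the deterministic level $u-\epsilon x$. By construction, $\Theta_\epsilon(u)$ has conditional law $\mathcal N(0,v(u-\epsilon x))$ given $\{X=x\}$. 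Hence the target Wasserstein distance splits naturally along the fibres of $X$.

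Concretely, I would fix a Lipschitz function $h$ with $\mathrm{Lip}(h)\le 1$, apply the tower property, and bound
\[
\bigl|\E[h(Z^{\epsilon}_T(u))]-\E[h(\Theta_\epsilon(u))]\bigr|
\le \E\!\left[\,\bigl|\E[h(Z^{\epsilon}_T(u))\mid X]-\E[h(\Theta_\epsilon(u))\mid X]\bigr|\,\right].
\]
For each fixed value $X=x$, the quantity inside the expectation is, by definition of the Wasserstein distance and the previous identification, at most
\[
d_W\!\left(|T|^{1/2}\bigl(C^{/T}_2(g,u-\epsilon x)-C^{*}_2(g,u-\epsilon x)\bigr),\,\mathcal N(0,v(u-\epsilon x))\right).
\]
Lemma \ref{lemunif} bounds this last quantity by $C\,(\log|T|)^{-1/12}$ with a constant $C$ that is \emph{independent of the level} $u-\epsilon x$; this level-uniformity is precisely the property I rely on. Taking the expectation over $X$ and then the supremum over Lipschitz functions $h$ with constant at most one yields
\[
d_W(Z^{\epsilon}_T(u),\Theta_\epsilon(u))\le C\,(\log|T|)^{-1/12},
\]
with $C$ depending neither on $\epsilon$ nor on $u$, as required.

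The only delicate point is the uniformity in the level of the Wasserstein bound in Lemma \ref{lemunif}: without it, $x\mapsto d_W(\cdots)$ might not be bounded uniformly and one could not interchange the bound with the expectation over $X$ in a direct way. Since Lemma \ref{lemunif} (via Lemma \ref{lem_app} in the appendix) was stated precisely to furnish such a uniform-in-$u$ control, no integrability condition on $X$ beyond what is already assumed in Definition \ref{MODELadditive} enters the argument at this stage. The rest of the reasoning is a routine conditioning manipulation, so I expect the proof to be quite short once the uniform Wasserstein estimate from Lemma \ref{lemunif} is invoked.
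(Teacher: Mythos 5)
Your proposal is correct and follows essentially the same route as the paper's own proof: conditioning on $X$, identifying the conditional laws of $Z^{\epsilon}_T(u)$ and $\Theta_\epsilon(u)$ given $\{X=x\}$, invoking the level-uniform Wasserstein bound of Lemma \ref{lemunif} at the level $u-\epsilon x$, and integrating over $X$. The only cosmetic difference is that you pass the absolute value inside via Jensen and the tower property before taking the supremum over Lipschitz $h$, whereas the paper exchanges the supremum and the outer expectation directly; both yield the same uniform bound.
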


\begin{proof}[Proof of Theorem \ref{quantitative}]
By the definition of Wasserstein distance, we have
\begin{equation*}
\begin{split}
d_W(Z^{\epsilon}_{T}(u),\Theta_\epsilon(u))
&= \sup_{h\in \mathcal H} \mathbb E[|h(Z^\epsilon_T(u)) - h(\Theta_\epsilon(u))|] \cr
& \le  \mathbb E \left [ \sup_{h\in \mathcal H}\mathbb E[|h(Z^\epsilon_T(u)) - h(\Theta_\epsilon(u))| |X] \right ].
\end{split}
\end{equation*}
The latter supremum is equal to the Wasserstein distance between $Z^{\epsilon}_{T}(u)$ and $\Theta_\epsilon(u)$ with respect to the conditional expectation given $X$.\\
Actually, conditionally to $\{X=x\}$,
$Z^{\epsilon}_{T}(u)$ equals $|T|^{1/2}\,\left(C^{/T}_2(g,u-\epsilon x)-C^{*}_2(g,u-\epsilon x)\right)$ and
$\Theta_\epsilon(u)$ is $N(0,v(u-\epsilon x))$ distributed. \newline 
Hence, applying Lemma \ref{lemunif}  yields $
\sup_{h\in \mathcal H}\mathbb E[|h(Z^\epsilon_T(u)) - h(\Theta_\epsilon(u))| |X]= O \left((\log |T|)^{-1/12} \right)$,
where the $O$-constant does not depend on $u$ nor on $\epsilon$ and $X$. Lebesgue dominated convergence theorem allows us to conclude.
\end{proof}

\medskip

We now focus on the random variable $\Theta_\epsilon(u)$ that has been introduced in Theorem \ref{quantitative}. Let us quote that it is non Gaussian, yielding an unusual non Gaussian limit of $Z^{\epsilon}_{T}(u)$ as $T\nearrow \R^2$. In the next theorem, we provide the density distribution function of $\Theta_\epsilon(u)$ and a corresponding Taylor expansion for small $\epsilon > 0$.

\begin{Theorem} \label{theoTCLZ}
Under the same assumptions as Theorem \ref{quantitative}, it holds that, for fixed $\epsilon>0$,
\begin{align}\label{TCLZ}
Z^{\epsilon}_T=|T|^{1/2}\left(C^{/T}_2(f,u)-C^{*}_2(g,u-\epsilon X)\right)  \overset{d}{\underset{T\nearrow\R^2}{\longrightarrow}} \Theta_\epsilon(u),
\end{align}
where $\Theta_\epsilon(u)$'s probability density function is given by
\begin{equation} \label{eqhepsilon}
h_\epsilon:y\mapsto \E[\phi(v(u-\epsilon X),y)], \quad y\in \R,
\end{equation}
where $\phi(v,\cdot)$ stands for the p.d.f. of $N(0,v)$ and $v(\cdot)$ is given by \eqref{vu}. Furthermore,  $h_\epsilon$  can be expanded for small $\epsilon>0$ as
\begin{align}\label{TaylorHepsilon}
h_\epsilon(y) =  f_{\tiny \mbox{BEP}}^{\delta=0}(y) (1+ \gamma_1 - \gamma_2)+ f_{\tiny \mbox{BEP}}^{\delta=2}(y)(\gamma_2- 2 \gamma_1)+  f_{\tiny \mbox{BEP}}^{\delta=4}(y)\gamma_1+ O(\epsilon^3),
\end{align}
where $\gamma_1:=    \frac{3}{8}\,\epsilon^2\, \E[X^2]\, \frac{v'(u)^2}{v(u)^2}$, $\gamma_2:= \frac{1}{2 \sqrt{2}}\,\epsilon^2 \,\E[X^2]\, \frac{v''(u)}{v(u)}$ and
\begin{align}\label{BEPdelta}
f_{\tiny \mbox{BEP}}^{\delta}(y)=  \left(\left|\frac{y}{ \sqrt{2 v(u)}}\right|^{\delta} \rme^{-\frac{y^{2}}{2 v(u)}}\right) \, \left(\sqrt{2 v(u)} \,\,\Gamma\left(\frac{\delta+1}{2}\right)\right)^{-1}, \quad  y \in \R.
\end{align}
\end{Theorem}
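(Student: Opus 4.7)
The convergence \eqref{TCLZ} is an immediate corollary of Theorem \ref{quantitative}: since $d_W(Z^\epsilon_T(u),\Theta_\epsilon(u)) = O((\log|T|)^{-1/12}) \to 0$ as $T \nearrow \R^2$, and convergence in Wasserstein distance implies convergence in distribution, $Z^\epsilon_T(u)$ converges weakly to $\Theta_\epsilon(u)$. The density formula \eqref{eqhepsilon} is read off directly from the definition of $\Theta_\epsilon(u)$ given in the statement of Theorem \ref{quantitative}: conditionally on $\{X=x\}$, $\Theta_\epsilon(u)$ is $\mathcal N(0,v(u-\epsilon x))$, so its conditional density is $\phi(v(u-\epsilon x),\cdot)$, and integrating against the law of $X$ yields $h_\epsilon(y) = \E[\phi(v(u-\epsilon X),y)]$.

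The core task is the Taylor expansion \eqref{TaylorHepsilon}. I would expand the integrand in powers of $\epsilon$ at order two. The $\mathcal C^4$-regularity of the covariance function (Definition \ref{defFGT}) ensures enough smoothness of $v$ so that $v(u-\epsilon X) = v(u) - \epsilon X\, v'(u) + \frac{\epsilon^2 X^2}{2}\, v''(u) + O(\epsilon^3 |X|^3)$. Combining this with a second-order expansion of $V\mapsto \phi(V,y)$ around $V=v(u)$ and using $\E[X]=0$ to kill the first-order term, one obtains
\[
h_\epsilon(y) = \phi(v(u),y) + \frac{\epsilon^2 \E[X^2]}{2}\Bigl(v''(u)\,\partial_V\phi(v(u),y) + v'(u)^2\,\partial_V^2\phi(v(u),y)\Bigr) + O(\epsilon^3).
\]

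It then remains to re-express this in terms of the BEP densities defined in \eqref{BEPdelta}. Direct differentiation of the Gaussian density gives
\[
\partial_V\phi(V,y) = \phi(V,y)\Bigl(\frac{y^2}{2V^2} - \frac{1}{2V}\Bigr), \qquad \partial_V^2\phi(V,y) = \phi(V,y)\Bigl(\frac{y^4}{4V^4} - \frac{3 y^2}{2V^3} + \frac{3}{4V^2}\Bigr).
\]
Using \eqref{BEPdelta} together with $\Gamma(1/2) = \sqrt{\pi}$, $\Gamma(3/2) = \frac{\sqrt{\pi}}{2}$, $\Gamma(5/2) = \frac{3\sqrt{\pi}}{4}$, one checks the identifications $\phi(v(u),y) = f_{\tiny \mbox{BEP}}^{0}(y)$, $\frac{y^2}{v(u)}\phi(v(u),y) = f_{\tiny \mbox{BEP}}^{2}(y)$ and $\frac{y^4}{3 v(u)^2}\phi(v(u),y) = f_{\tiny \mbox{BEP}}^{4}(y)$. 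Substituting and collecting the coefficients of $f_{\tiny \mbox{BEP}}^{0}, f_{\tiny \mbox{BEP}}^{2}, f_{\tiny \mbox{BEP}}^{4}$ in the above expansion produces, up to elementary algebra, exactly $1+\gamma_1-\gamma_2$, $\gamma_2-2\gamma_1$ and $\gamma_1$, which yields \eqref{TaylorHepsilon}.

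The only genuinely delicate step is the justification of the remainder $O(\epsilon^3)$: one must interchange expectation with the Taylor remainder in $\phi(v(u-\epsilon X),y)$. I would do this via the integral (Lagrange) form of the remainder, using the moment hypothesis $\E[|X|^3]<+\infty$ from Definition \ref{MODELadditive} for $X$-integrability together with uniform bounds on $\partial_V^3\phi(V,y)$ on a neighbourhood of $V=v(u)$ — bounds that follow from the smoothness and positivity of $v$ and from the rapid Gaussian decay of $\phi(V,y)$ in $y$. Once this point is settled, the remainder of the proof is purely algebraic.
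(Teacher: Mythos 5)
Your proposal is correct and follows essentially the same route as the paper: weak convergence read off from Theorem \ref{quantitative}, the density via conditioning on $X$ and Fubini--Tonelli, and a second-order Taylor expansion of $x\mapsto\phi(v(u-x),y)$ at $x=0$ (which the paper packages as Lemma \ref{LemmaTaylor}), with the same formulas for $\partial_{v}\phi$, $\partial^2_{vv}\phi$ and the same identification of $\phi$, $\tfrac{y^2}{v}\phi$, $\tfrac{y^4}{3v^2}\phi$ with the BEP densities. One caveat: if you actually perform the coefficient matching you describe, the $v''$ contribution carries the prefactor $\tfrac14\,\epsilon^2\,\E[X^2]\,v''(u)/v(u)$ rather than the $\tfrac{1}{2\sqrt{2}}$ appearing in the definition of $\gamma_2$ in the statement, so the \emph{exact} agreement you claim holds only up to what appears to be a typo in that printed constant.
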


\begin{proof}[Proof of Theorem \ref{theoTCLZ}]
The convergence in \eqref{TCLZ} is a direct consequence of Theorem \ref{quantitative}. In order to get the probability density function of $\Theta_\epsilon(u)$ in \eqref{eqhepsilon}, it is enough to compute $\E[\varphi(\Theta_\epsilon(u))]$ for any bounded positive function $\varphi$ as follows,
\begin{align*}
\E[\varphi(\Theta_\epsilon(u))] =\E\left[\E[\varphi(\Theta_\epsilon(u))|X]\right]&=\E\left[\int_{\R}\varphi(y)\,\phi(v(u-\epsilon X),y)\,\rmd y  \right]\\ &=\int_{\R}\varphi(y)\,\E[\phi(v(u-\epsilon X),y)]\,\rmd y,
\end{align*}
where Fubini-Tonelli theorem has been used for the last equality.
\\~\\
To get the  approximation of  $h_\epsilon$  in   \eqref{TaylorHepsilon}, we recall the following result that can be proved with Taylor expansion and easy algebra.

\begin{Lemma}\label{LemmaTaylor}
For any function $\varphi$ in $\mathcal C^2(\R)$ with bounded derivatives up to order two and any random variable $\eta$ with finite third moment,
\begin{equation*} \label{EPhiEta}
\E[\varphi(\eta)]=\varphi(\E\eta)+\frac 12\varphi''(\E\eta)\,\Var\eta+O(\E[|\eta-\E\eta|^3]),
\end{equation*}
where the constant in $O$-notation depends on $\Var\eta$ and on the bounds of derivatives of $\varphi$.
\end{Lemma}

Applying Lemma \ref{LemmaTaylor} with $\eta=\epsilon X$ and $\varphi(\cdot)=\phi(v(u-\cdot), y)$ for fixed $u,y$ and $\epsilon$, and bearing in mind that $\E[X]=0$, ones get
\[\E[\phi(v(u-\epsilon X),y)]=\phi(v(u),y)+\frac 12\varphi''(0)\,\epsilon^2\E[X^2]+O(\epsilon^3\E[|X|^3]),\]
where
\[\varphi''(0)=\partial^2_{vv}\phi(v(u),y)\,v'(u)^2+\partial_{v}\phi(v(u),y)\,v''(u). \]
Since
\begin{equation*}
\partial_{v}\phi(v, y) = \frac{\sqrt{\pi v} \rme^{-\frac{y^2}{2 v}} (y^2- v)}{2 \pi \sqrt{2} v^3} \quad  \mbox{ and } \quad   \partial^2_{vv}\phi(v, y)= \frac{\sqrt{\pi v}(y^4+3 v^2 -6 vy^2)\rme^{-\frac{y^2}{2 v} }}{4 v^5 \pi \sqrt{2}},
\end{equation*}
the proof is complete.
\end{proof}

\paragraph{Discussion on $h_\epsilon$ density.}

Since the density in \eqref{TaylorHepsilon} plays a crucial rule in our asymptotics and as its non-Gaussian shape was not previously studied in the literature,  in the following we propose an  analysis  of the truncated version of $h_\epsilon(y)$,  \emph{i.e.},

\begin{equation} \label{htilde}
\widetilde{h}_\epsilon(y) =  f_{\tiny \mbox{BEP}}^{\delta=0}(y) (1+ \gamma_1 - \gamma_2)+ f_{\tiny \mbox{BEP}}^{\delta=2}(y)(\gamma_2- 2 \gamma_1)+  f_{\tiny \mbox{BEP}}^{\delta=4}(y)\gamma_1,
\end{equation}
where $f_{\tiny \mbox{BEP}}^{\delta}(y)$ as in \eqref{BEPdelta} and $\gamma_1$, $\gamma_2$ as in  Theorem \ref{theoTCLZ}.  \\

Firstly, one can remark that coefficients $\gamma_1$ and $\gamma_2$ of the linear combination   $\widetilde{h}_\epsilon$ depend on the variance function $v(u)$  in \eqref{vu} and on its first and second derivatives.     For the nodal set with $u=0$ one can easily evaluate
$v(0)=  (2 \pi)^{-1} \int_{\R^2}  \arcsin(\rho(t)) \,\rmd t$. An illustration of  theoretical $u \mapsto v(u)$ in \eqref{vu},  $u \mapsto v^{'}(u)$    and $u \mapsto v^{''}(u)$  can be found in Figure \ref{fdeltafigure} (left panel).\\

Furthermore, notice that function  $f_{\tiny \mbox{BEP}}^{\delta}(y)$ in  \eqref{BEPdelta} is a particular case of the Bimodal Exponential Power density function,  \emph{i.e.}, $f_{\tiny \mbox{BEP}}(y)= (\alpha |\frac{y-\mu}{\zeta}|^{\delta} \rme^{- |\frac{y-\mu}{\zeta}|^{\alpha}}) \, (2 \zeta \Gamma(\frac{\delta+1}{\alpha}))^{-1}$, for $y \in \R$ (see \citet{Hassan})   with fixed values of parameters  $\alpha=2, \mu=0$ and $\zeta = \sqrt{2 v(u)}$ and  varying $\delta$. Obviously, for $y \in \R$, $f_{\tiny \mbox{BEP}}^{\delta=0}(y) = \phi(v(u), y)$, \emph{i.e.},  the Gaussian density with zero mean and variance  $v(u)$. An illustration of the behaviour of these bimodal densities for two different values of $u$ is given in Figure \ref{fdeltafigure} ($u=1.5$ center panel, $u=3$ right panel).\\

Theoretical resulting $\widetilde{h}_\epsilon$ functions in \eqref{htilde},  built by using   $v(u)$, $v'(u)$, $v''(u)$ and $f_{\tiny \mbox{BEP}}^{\delta}$ functions   studied above,   are displayed below in Section \ref{numerique} (see Figures \ref{ZepsilonFigure} and \ref{ZepsilonFigure2}). \\

\begin{figure}[H]
  \includegraphics[width=4.4cm,height=4.5cm]{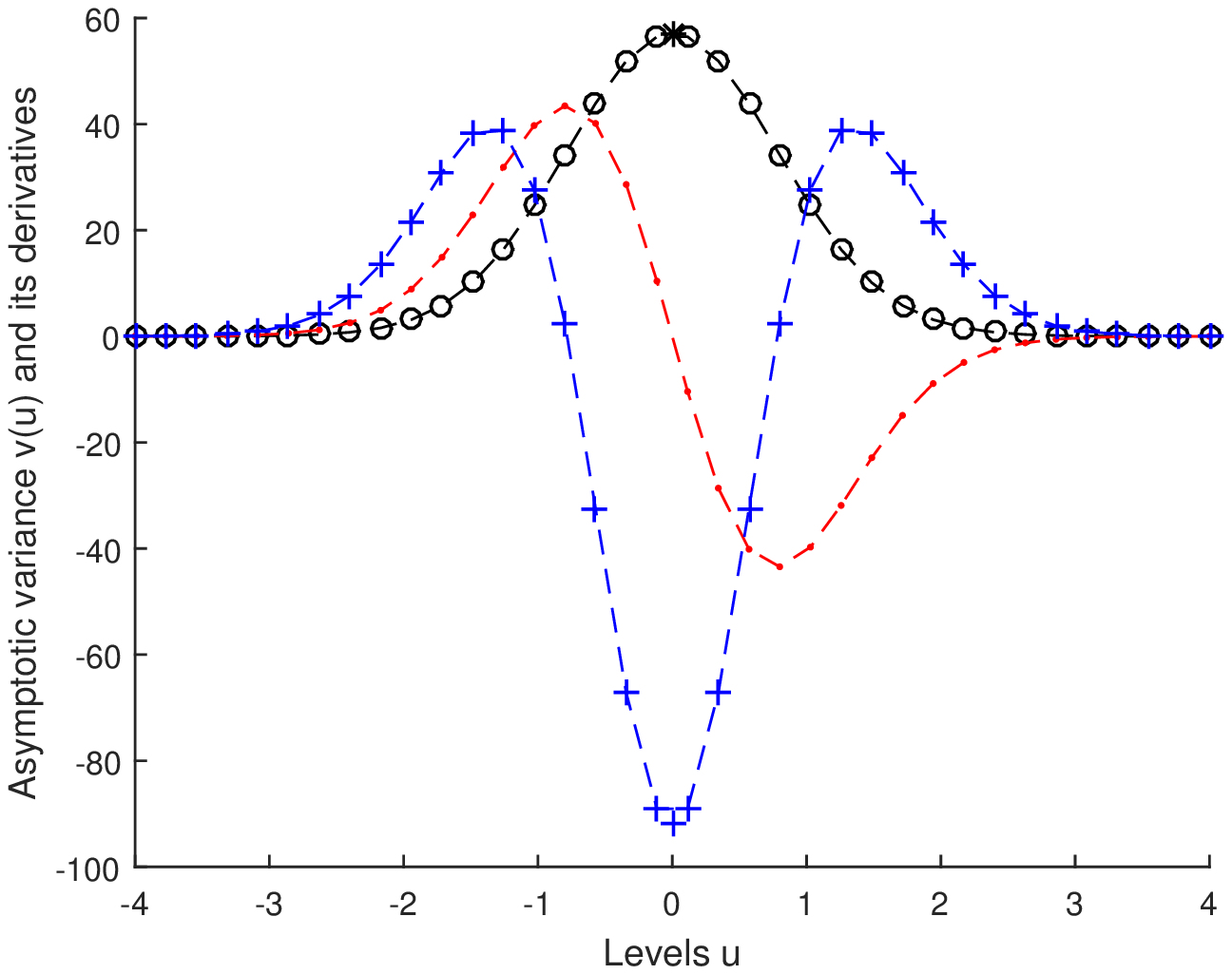}
 \includegraphics[width=4.4cm,height=4.5cm]{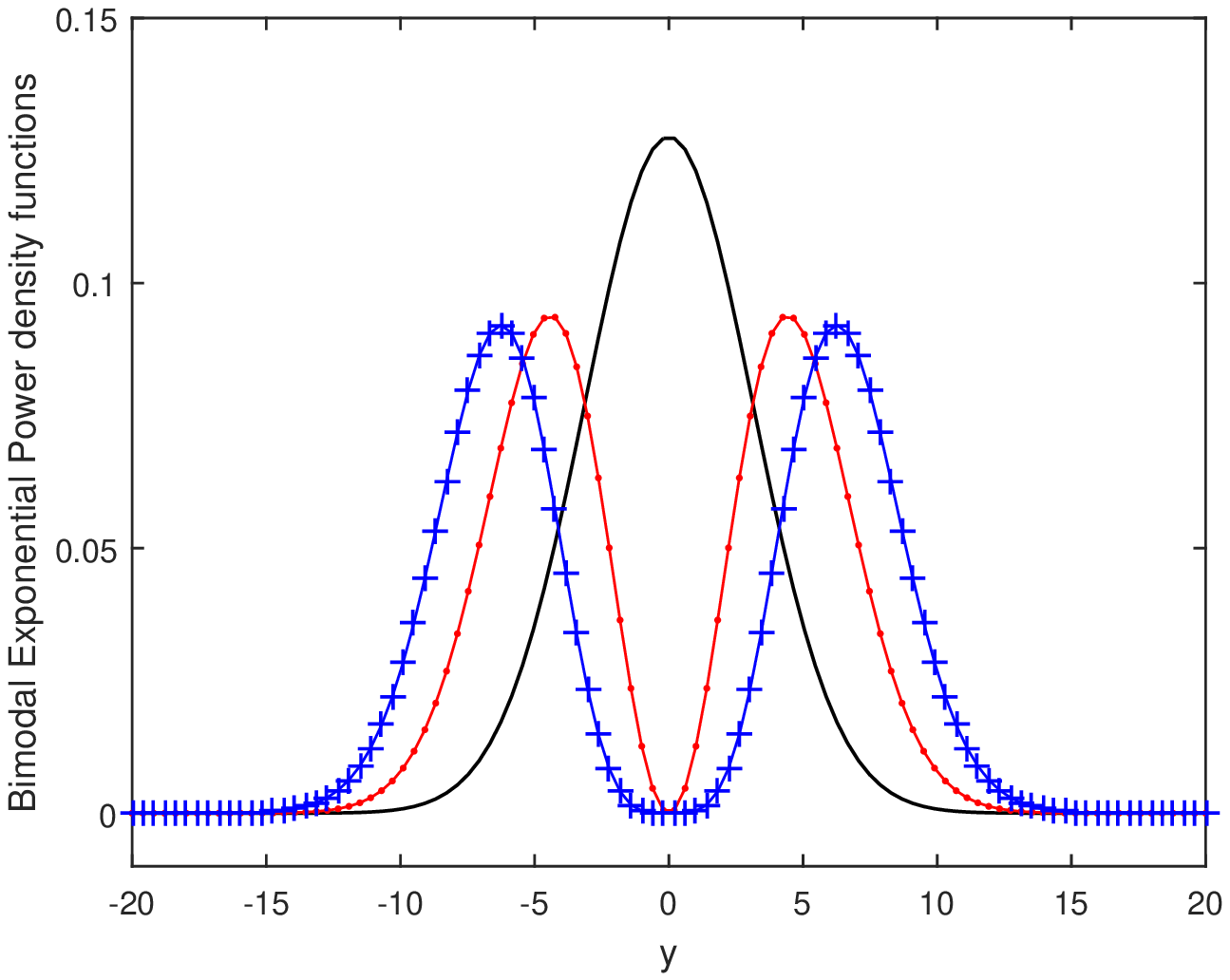}
   \includegraphics[width=4.4cm,height=4.5cm]{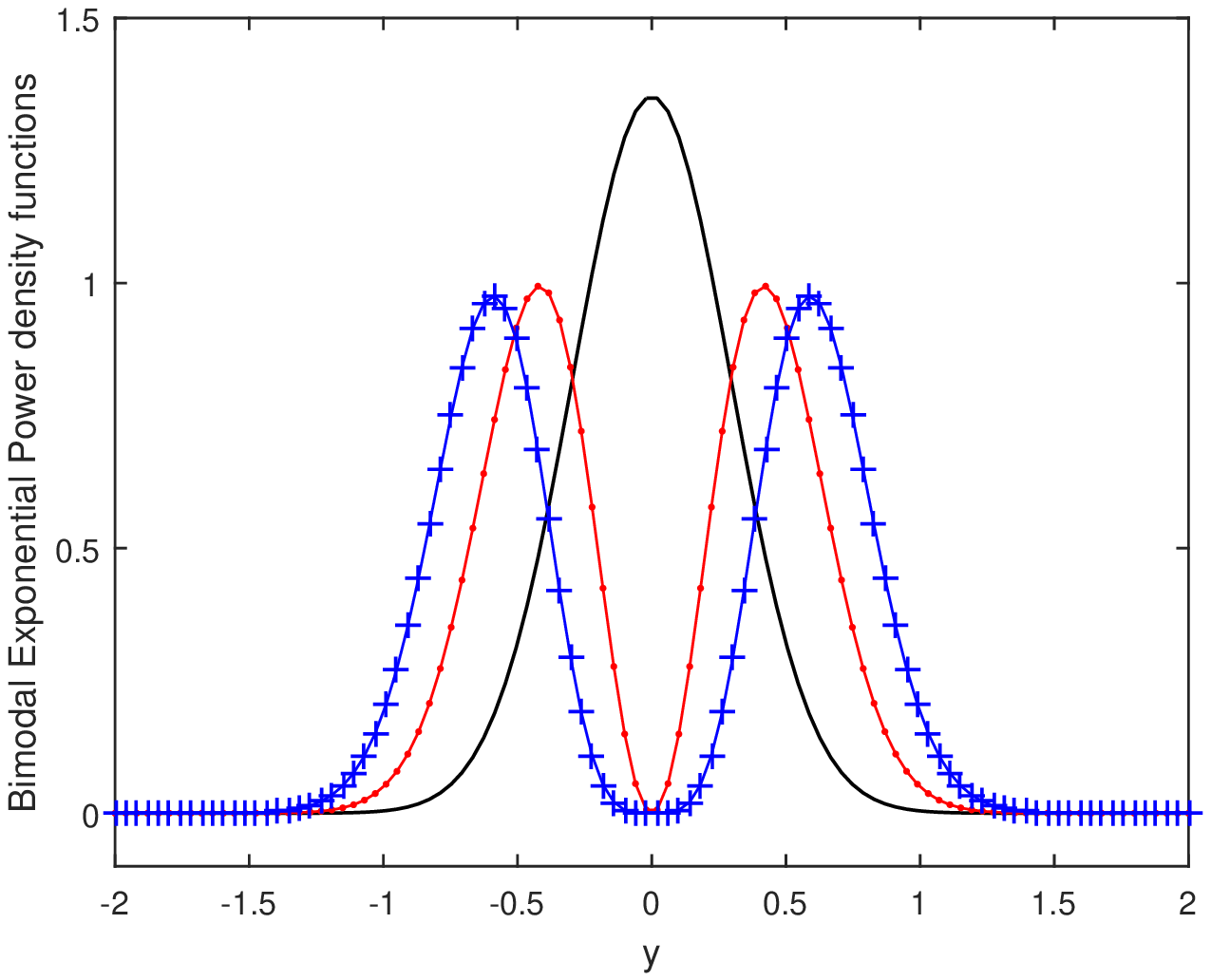}
  \vspace{-0.17cm}
 \caption{Left panel:   $u \mapsto v(u)$ in \eqref{vu} (dotted black line), $u \mapsto v^{'}(u)$  (dashed  red line) and $u \mapsto v^{''}(u)$ (crossed blue line), for several levels $u$. Black point represents the   value  $v(0)$. Center and right panels:  theoretical  Bimodal Exponential Power density functions   for $u=1.5$ (center panel) and  $u=3$ (right panel). We display  $f_{\tiny \mbox{BEP}}^{\delta=0}$ (black line)  $f_{\tiny \mbox{BEP}}^{\delta=2}$ (red line) $f_{\tiny \mbox{BEP}}^{\delta=4}$ (blue  line), with  $f_{\tiny \mbox{BEP}}^{\delta}$  as in \eqref{BEPdelta}  with $v(u)$ as in \eqref{vu}.    The considered correlation function is    $\rho(t)=  \rme^{-\kappa^2 \parallel t \parallel^2}$, with   $\sigma_g = 1$ and $\kappa = 100/2^{10}$.  } \label{fdeltafigure}
  \end{figure}

\subsection{Asymptotics for $\epsilon \to 0$ and  $T\nearrow\R^2$}\label{CLTarea}

Let $T^{(N)}=NT$, as introduced in Section \ref{preliminary}. In the following we prove that  $Y^{\epsilon_N}_{T^{(N)}}$  given by \eqref{YTu}  satisfies a classical Central Limit Theorem as soon as $\epsilon_N$ goes to 0 sufficiently fast,  for $N \rightarrow\infty$.

\begin{Theorem}\label{theoremThetaAsym}
Let  $f(t) = g(t) + \epsilon \, X$,   $t\in \R^2$ as in Definition \ref{MODELadditive} and  $\epsilon_N$  be such that
\begin{equation} \label{cond_epsilon}
\lim_{N \rightarrow\infty}N\,\epsilon_N=0.
\end{equation}
Then it holds that,
\begin{align*}
 Y^{\epsilon_N}_{T^{(N)}}(u)= |T^{(N)}|^{1/2}\left(C^{/T^{(N)}}_2(f,u)-\E[C^{/T^{(N)}}_2(f,u)] \right) \overset{d}{\underset{N\to \infty}{\longrightarrow}} \mathcal N(0,v(u)),
\end{align*}
with $v(u)$ given by \eqref{vu}.
\end{Theorem}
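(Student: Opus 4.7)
The plan is to exploit the decomposition $Y^{\epsilon}_T(u) = Z^{\epsilon}_T(u) + R^{\epsilon}_T(u)$ recorded in \eqref{Y=Z+R}, to show separately that $Z^{\epsilon_N}_{T^{(N)}}(u)$ converges in distribution to $\mathcal N(0,v(u))$ and that the residual $R^{\epsilon_N}_{T^{(N)}}(u)$ vanishes in probability under condition \eqref{cond_epsilon}, and then to conclude via Slutsky's lemma.

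\textbf{Step 1: the residual term.} Since $C_2^{\ast}(g,v)=\Psi(v/\sigma_g)$, we have $R^{\epsilon}_T(u)=|T|^{1/2}\bigl(\Psi((u-\epsilon X)/\sigma_g)-\E[\Psi((u-\epsilon X)/\sigma_g)]\bigr)$. A first-order Taylor expansion of $\Psi$ around $u/\sigma_g$ together with $\E[X]=0$ and $\E[X^2]<\infty$ yields
\[
\bigl|\Psi((u-\epsilon X)/\sigma_g)-\E[\Psi((u-\epsilon X)/\sigma_g)]\bigr|\le \tfrac{\|\Psi'\|_\infty}{\sigma_g}\,\epsilon|X|+C\,\epsilon^2,
\]
for some constant $C$ depending only on $\|\Psi''\|_\infty$, $\sigma_g$ and $\E[X^2]$. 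Taking expectations and using $|T^{(N)}|^{1/2}=N|T|^{1/2}$, one gets $\E|R^{\epsilon_N}_{T^{(N)}}(u)|\lesssim N\epsilon_N+N\epsilon_N^2$, and both quantities tend to $0$ under \eqref{cond_epsilon} (the second because $N\epsilon_N\to 0$ forces $\epsilon_N\to 0$). Hence $R^{\epsilon_N}_{T^{(N)}}(u)\to 0$ in $L^1$, hence in probability.

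\textbf{Step 2: the main term.} By Theorem \ref{quantitative}, whose $O$-constant is independent of $\epsilon$, one has $d_W(Z^{\epsilon_N}_{T^{(N)}}(u),\Theta_{\epsilon_N}(u))=O((\log|T^{(N)}|)^{-1/12})=O((\log N)^{-1/12})\to 0$. It remains to show $\Theta_{\epsilon_N}(u)\Rightarrow \mathcal N(0,v(u))$. Writing $\Theta_\epsilon(u)\stackrel{d}{=}\sqrt{v(u-\epsilon X)}\,G$ with $G\sim \mathcal N(0,1)$ independent of $X$, and observing from \eqref{vu} that $v$ is continuous and bounded on $\R$ with $0\le v\le v(0)$, the coupling bound
\[
d_W\bigl(\Theta_{\epsilon_N}(u),\mathcal N(0,v(u))\bigr)\le \E|G|\,\E\bigl|\sqrt{v(u-\epsilon_N X)}-\sqrt{v(u)}\bigr|
\]
together with dominated convergence yields the desired convergence. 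Combining the two Wasserstein estimates by the triangle inequality gives $d_W(Z^{\epsilon_N}_{T^{(N)}}(u),\mathcal N(0,v(u)))\to 0$.

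\textbf{Step 3: conclusion.} Slutsky's theorem applied to $Y^{\epsilon_N}_{T^{(N)}}(u)=Z^{\epsilon_N}_{T^{(N)}}(u)+R^{\epsilon_N}_{T^{(N)}}(u)$ delivers the announced CLT.

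The main obstacle, and the reason Theorem \ref{quantitative} was carefully crafted, is the uniformity of the Wasserstein bound in $\epsilon$: without it, one could not let $\epsilon=\epsilon_N$ depend on $N$ while still controlling the distance to $\Theta_{\epsilon_N}(u)$. A subtler point is verifying continuity (and boundedness) of $v(\cdot)$, which is transparent from the integral representation \eqref{vu} since the integrand is uniformly bounded by $|\rho(t)|/\sqrt{1-\rho(t)^2}$ and integrable over $\R^2$ thanks to the decay assumption $|r(t)|=O(\|t\|^{-\alpha})$ with $\alpha>2$.
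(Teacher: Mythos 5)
Your argument is correct and follows essentially the same route as the paper: the decomposition $Y=Z+R$ from \eqref{Y=Z+R}, a moment bound showing the residual is $O(N\epsilon_N)$ and hence negligible under \eqref{cond_epsilon}, the $\epsilon$-uniform Wasserstein bound of Theorem \ref{quantitative} to replace $Z^{\epsilon_N}_{T^{(N)}}(u)$ by $\Theta_{\epsilon_N}(u)$, the convergence $\Theta_{\epsilon_N}(u)\Rightarrow\mathcal N(0,v(u))$, and Slutsky. The only cosmetic differences are that the paper controls the residual in $L^2$ rather than $L^1$ and deduces $\Theta_{\epsilon_N}(u)\Rightarrow\mathcal N(0,v(u))$ from the pointwise convergence of the densities $h_\epsilon(y)\to\phi(v(u),y)$ instead of your coupling bound.
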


\begin{proof}[Proof of Theorem \ref{theoremThetaAsym}] We start by writing $Y^{\epsilon}_{T}(u)=(Y^\epsilon_T(u)-\Theta_\epsilon(u))+\Theta_\epsilon(u)$.\smallskip

On the one hand, by triangular inequality we have
\begin{equation}\label{eqTr}
d_W(Y^{\epsilon}_{T}(u),\Theta_\epsilon(u)) \le d_W(Y^\epsilon_T(u), Z_T^\epsilon(u)) + d_W(Z^\epsilon_T(u), \Theta_\epsilon(u))
\end{equation}
From \eqref{Y=Z+R}, we have  $d_W(Y^\epsilon_T(u), Z_T^\epsilon(u)) \le \sqrt{\mathbb E[R_T^\epsilon(u)^2]}.$
Then, since
\begin{equation*}
\E[R^{\epsilon}_T(u)^2] =|T| \E[(C^*_2(g,u-\epsilon X)-\E[C^*_2(g,u-\epsilon X)])^2]
\end{equation*}
and from  \eqref{C2Tadditive},   $\E[C^*_2(g,u-\epsilon X)] = \Psi(u/\sigma_g)+ \frac{\epsilon^2}{\sigma_g^2}
\frac{\Psi^{''}(u/\sigma_g)}{2} \E[X^2]  + O\left (\frac{\epsilon^3}{\sigma_g^3} \E[|X|^3]\right)$,
one can get
\begin{equation*}
\mathbb E[R^{\epsilon}_T(u)^2]= |T| \left ( \Psi^{'}\left(\frac{u}{\sigma_g}\right)^2 \frac{\epsilon^2 \E[X^2]}{\sigma_g^2} + O(\epsilon^3)\right ) = \epsilon^2 |T| \left (\kappa_1\E[X^2]+O\left (\epsilon \right ) \right ),
\end{equation*}
where $\kappa_1 >0$ and the constant involved in the $O$-notation depends neither on $\epsilon$ nor on $T$. Then, from condition in \eqref{cond_epsilon}, the first term on the r.h.s. of \eqref{eqTr} with $\epsilon=\epsilon_N$ and $T=T^{(N)}$ goes to 0 as $N$ goes to infinity.\\
Concerning the second term, Theorem \ref{quantitative} yields $\kappa_2\,(\log|T|)^{-1/12}$ as upper bound, where $\kappa_2$ does not depend on $\epsilon$. Therefore, the second term on the r.h.s. of \eqref{eqTr} goes to 0 as $T\nearrow \R^2$ uniformly with respect to $\epsilon$ (see Theorem  \ref{theoTCLZ}).\\
Finally, thanks to the Wasserstein distance in \eqref{eqTr} that goes to 0, we get that $Y^{\epsilon_N}_{T^{(N)}}(u)-\Theta_{\epsilon_N}(u)$ converges to 0 in distribution. \smallskip

On the other hand, $\Theta_{\epsilon_N}(u)\overset{d}{\underset{N\to \infty}{\rightarrow}} \mathcal N(0,v(u))$ since $h_\epsilon(y)\to \phi(v(u),y)$ as $\epsilon \to 0$. At last, Slutsky theorem allows us to conclude.
\end{proof}

\subsection{Numerical illustrations}\label{numerique}

All over this section, $\sigma_g$ is assumed to be equal to 1. In the following,  by using histograms we compare  the empirical density of the random variable  $Z^{\epsilon}_T(u) :=  |T|^{1/2} (C_2^{/T}(f,u)-\Psi(u - \epsilon X))$ versus the truncated  probability density function of $\Theta_{\epsilon}$, \emph{i.e.}, $\widetilde{h}_\epsilon$ given in \eqref{htilde}. Each  histogram is built by reproducing  300 Montecarlo independent simulations in a large domain such that  $|T| = 1024^2$. \smallskip

\paragraph{Case 1: $X$ is Skellam distributed.}

Firstly,  we consider the case where $X$ follows a  discrete  Skellam probability distribution which is the difference  of two   independent  Poisson-distributed random variables with respective expected values $\mu_1$   and $\mu_2$. We  choose  the  parameters setting gathered in  Table \ref{parametersetting}.  Obtained results are shown Figure \ref{ZepsilonFigure}   for $u=1.5$ (first row)  and  for $u=3$ (second row). Furthermore,
necessary preliminary studies to built $\widetilde{h}_\epsilon$ as in \eqref{htilde}, on  BEP functions,  $u \mapsto v(u)$ and its derivatives are given in  Section  \ref{asymtoticSmallEpsilon}.

{\scriptsize
\begin{table}[H]
\centering
        \begin{tabular}{c|c|c|c|c|c|c|c}
            \hline \hline
        {\Large\strut}     $u$ & $X$ &     $\epsilon$     &      $\varepsilon : = \epsilon^2\, \E[X^2]$        &     $\gamma_1$      &      $\gamma_2$        &     Figure \ref{ZepsilonFigure}   &   \\ \hline
       {\large\strut}     \multirow{3}{*}{$1.5$} & \multirow{3}{*}{Skellam$_{\mu_1=\mu_2=1}$} & 0.5 & 0.5  & 0.979  & 0.686 & left panel    &  \multirow{3}{*}{first row} \\ \cline{3-7}
         {\large\strut}               & &  0.3  &  0.18   &  0.352   &  0.245 & center  panel &  \\ \cline{3-7}
           {\large\strut}            &   &  0.1  & 0.02   &  0.039   &  0.028 & right  panel & \\     \hline \hline
           {\large\strut}                \multirow{3}{*}{$3$} & \multirow{3}{*}{Skellam$_{\mu_1=\mu_2=1}$}  & 0.5 & 0.5  &  2.818 & 2.508 & left panel & \multirow{3}{*}{second row} \\ \cline{3-7}
         {\large\strut}               & &  0.3  &  0.18   & 1.015    & 0.903   & center  panel &\\ \cline{3-7}
           {\large\strut}            &   &  0.1  & 0.02   & 0.113     & 0.101  & right  panel & \\     \hline \hline
        \end{tabular}   \vspace{0.3cm}
\caption{Parameters setting associated to Figure \ref{ZepsilonFigure}. Here  $|T| = 1024^2$,  $\mu=0$, $\sigma_g=1$  and  $\rho(t)= e^{-\kappa^2 \parallel t \parallel^2}$,   for $\kappa = 100/2^{10}$, \emph{i.e.}, $\lambda= 0.019$.   }\label{parametersetting}
\end{table}}

\begin{figure}[H] 
 \includegraphics[width=4.4cm, height=4.5cm]{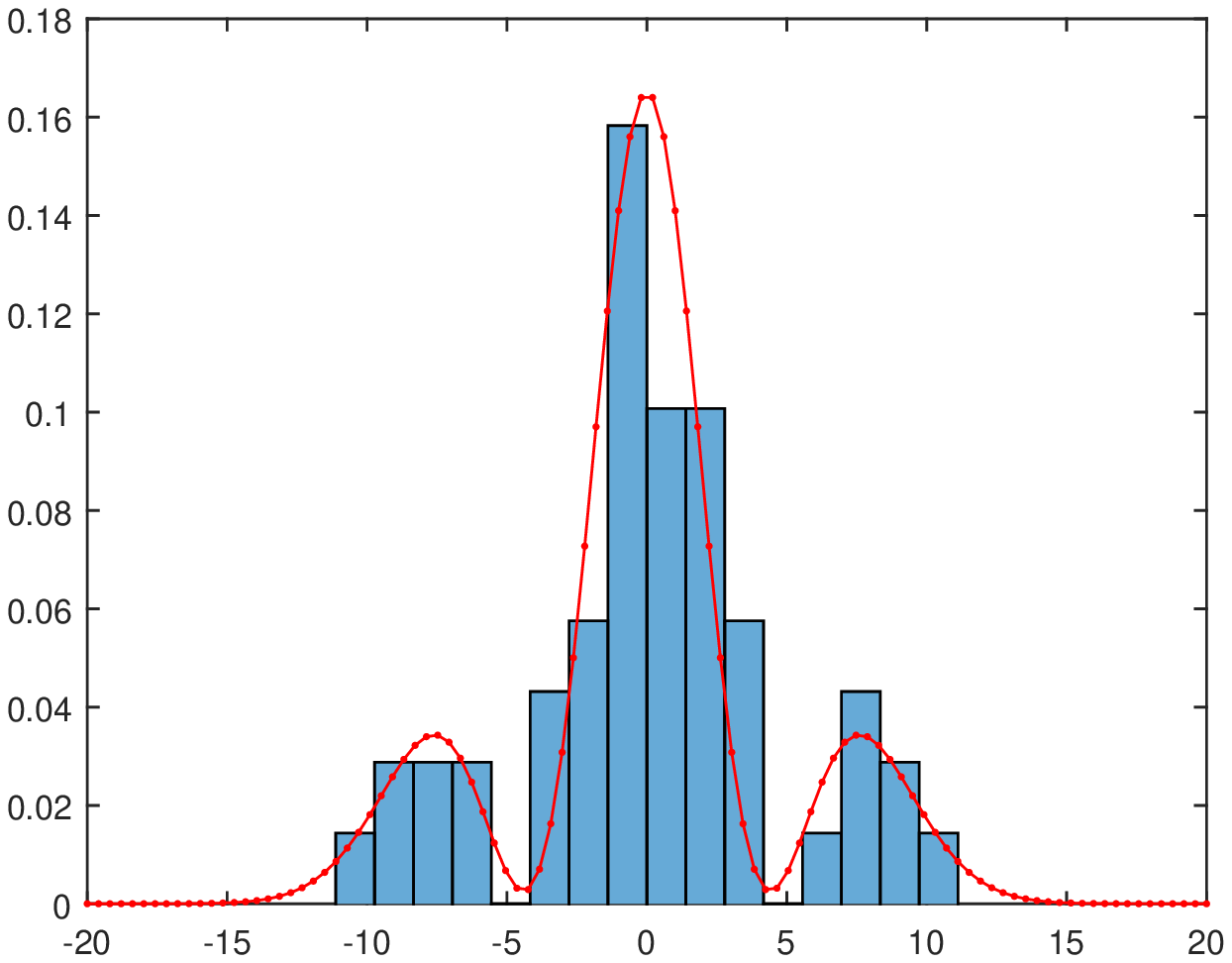}
 \includegraphics[width=4.4cm, height=4.5cm]{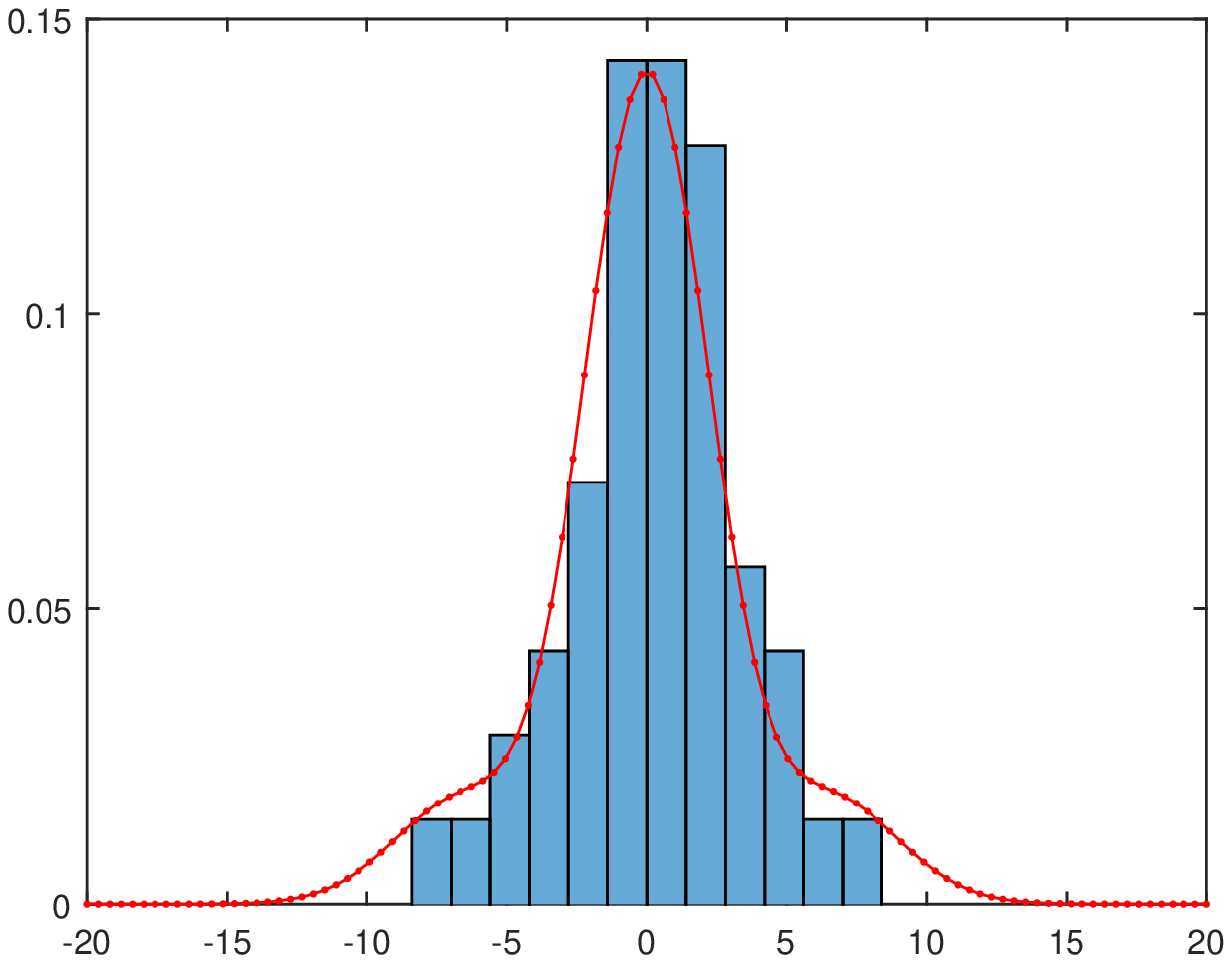}
 \includegraphics[width=4.4cm, height=4.5cm]{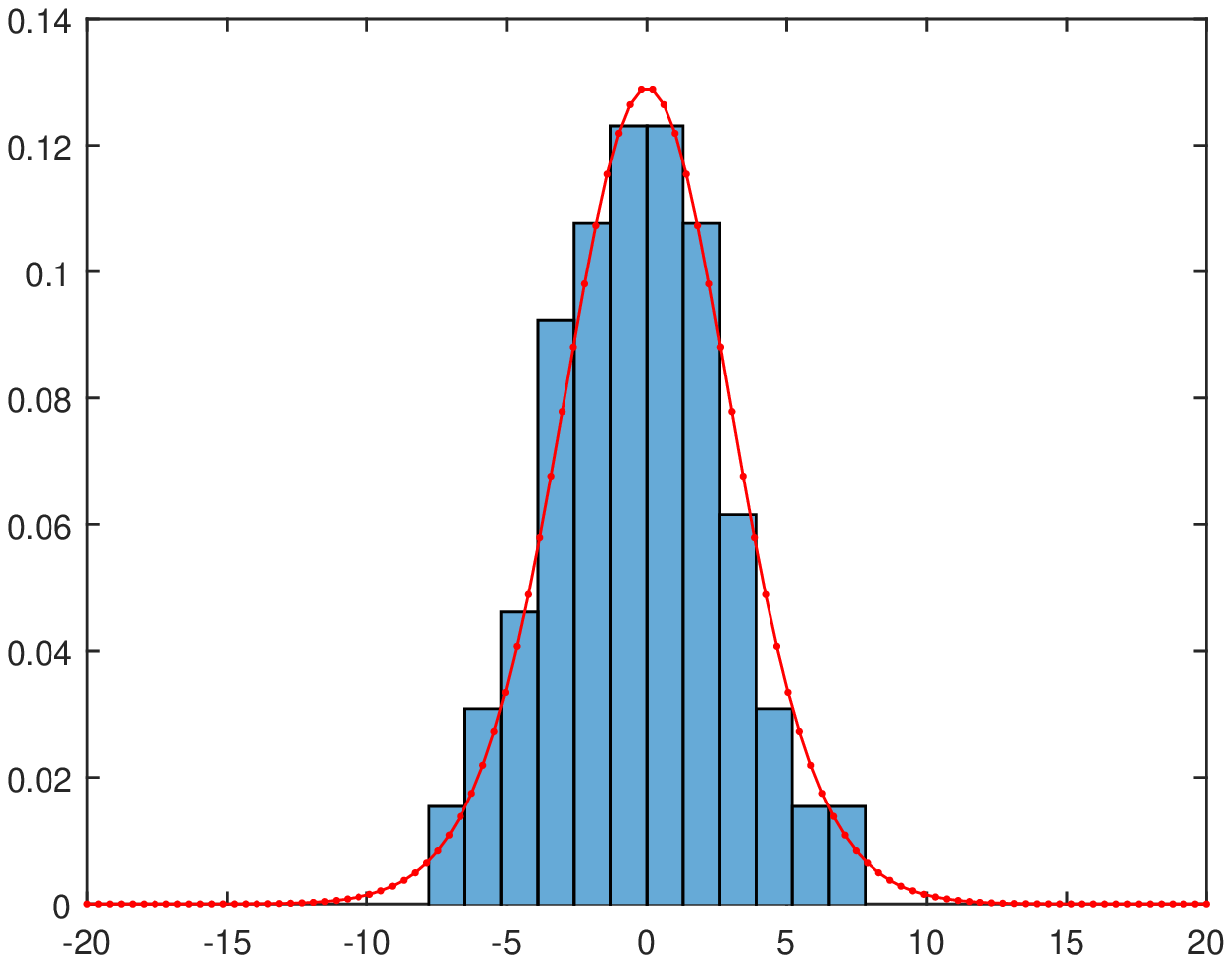}\\
 \includegraphics[width=4.4cm, height=4.5cm]{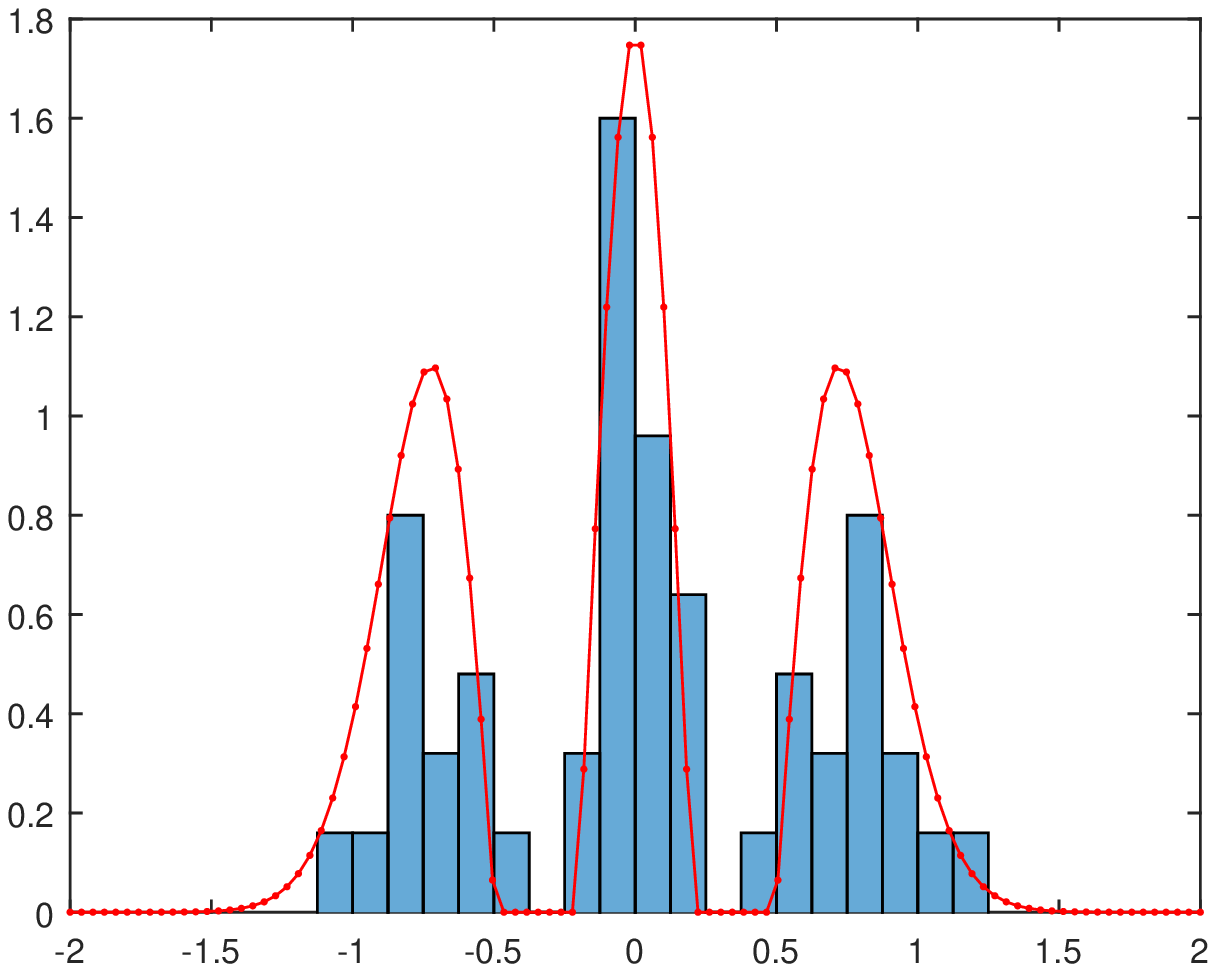}
 \includegraphics[width=4.4cm, height=4.5cm]{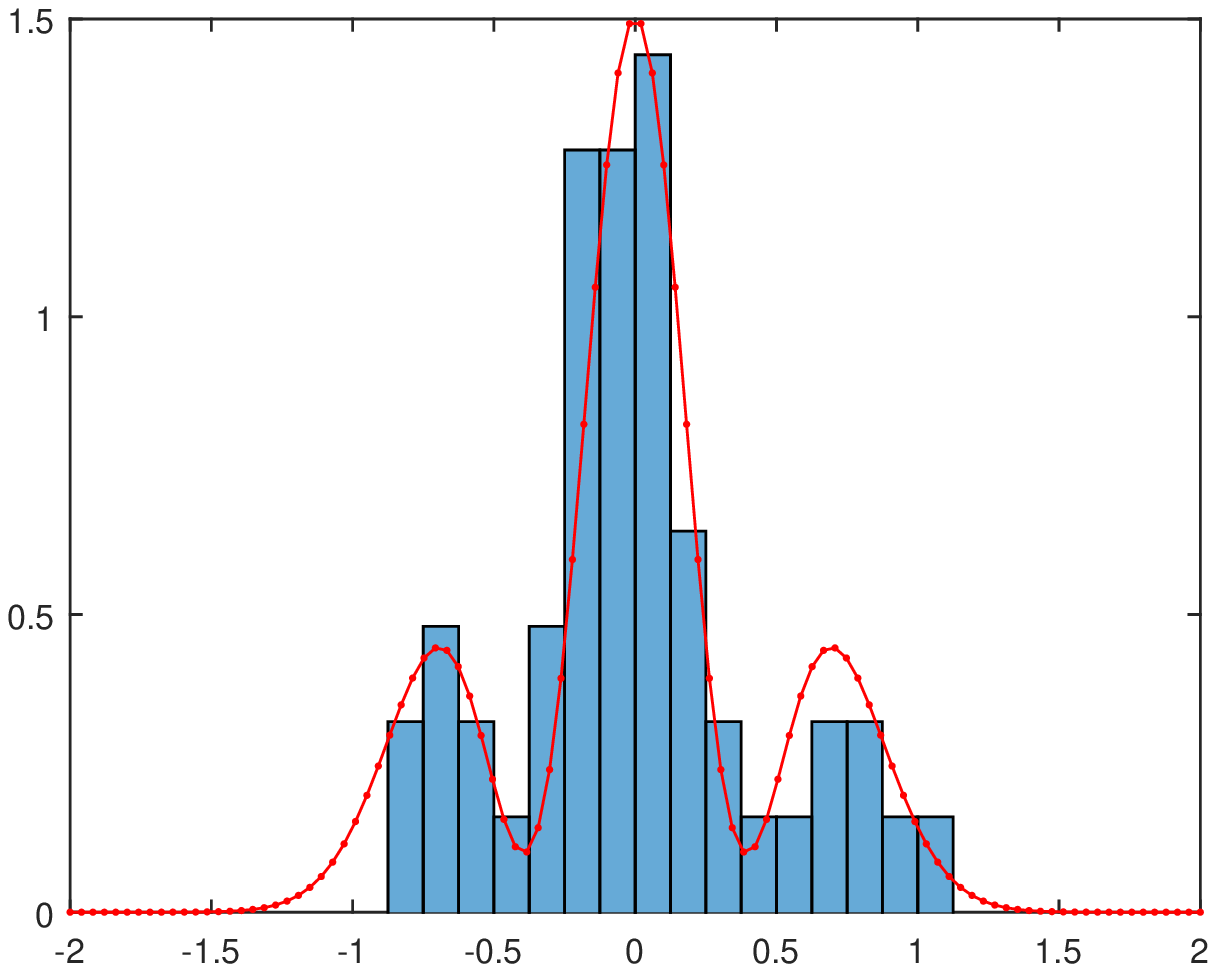}
 \includegraphics[width=4.4cm, height=4.5cm]{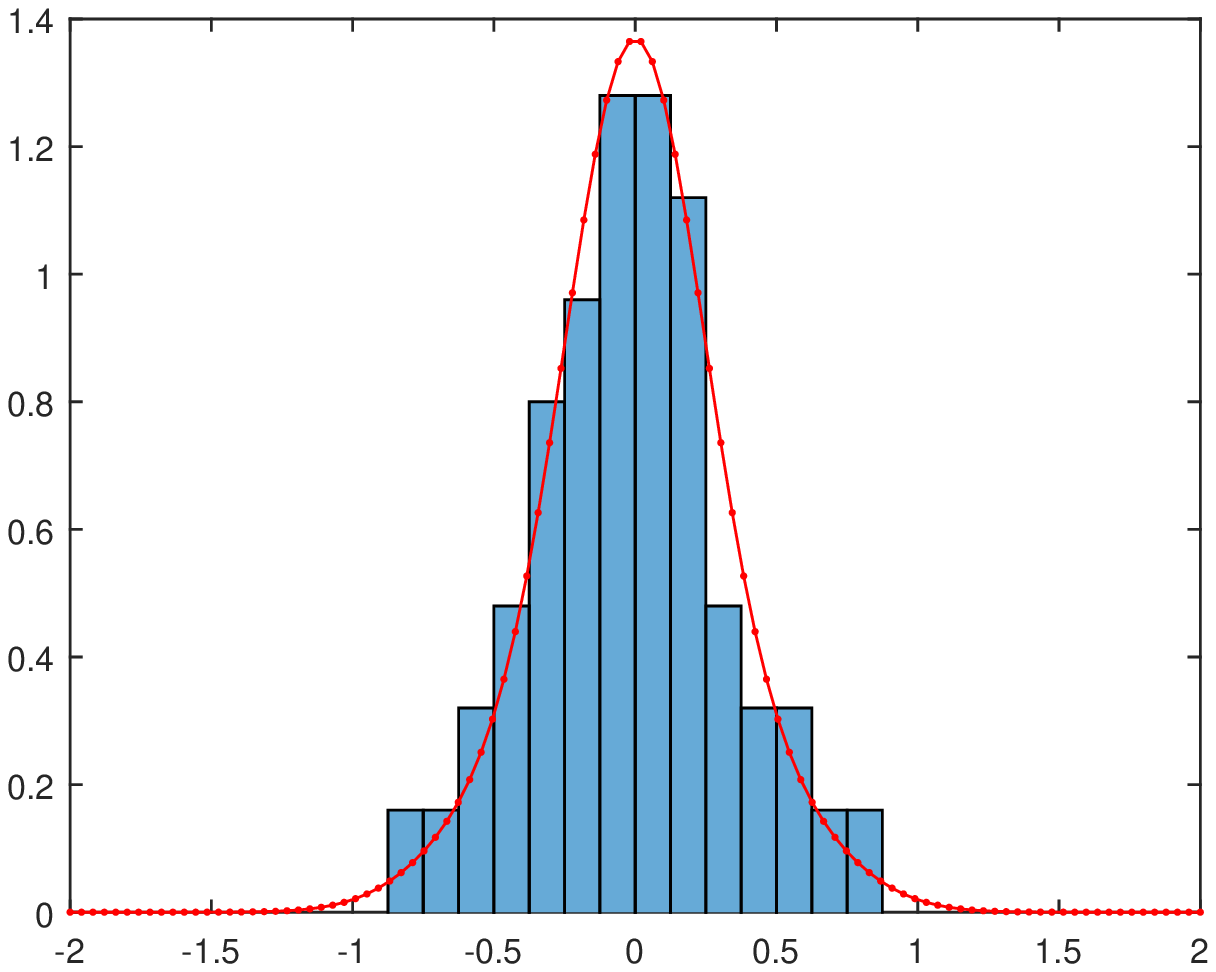}
    \vspace{-0.17cm}
   \caption{\textbf{Histogram for the study of density of $Z^{\epsilon}_T$ when  $X$ is Skellam distributed}, for $u=1.5$ (first row) and  $u=3$ (second row),  based on 300 Montecarlo independent simulations.  The chosen parameters setting is gathered  in Table \ref{parametersetting}.    Necessary preliminary studies to build $\widetilde{h}_\epsilon$ as in \eqref{htilde}, on  BEP functions,  $u \mapsto v(u)$ and its derivatives are given in Figure \ref{fdeltafigure}. Resulting theoretical $\widetilde{h}_\epsilon$ density is drawn by using red plain line. }\label{ZepsilonFigure}
\end{figure}

\paragraph{Case 2: $X$ is $t$-distributed.}

We now consider the case where $X$ follows a $t$-distribution  and  the  parameters are those  in  Table \ref{parametersetting2}.  Obtained results are shown Figure \ref{ZepsilonFigure2}   for $u=1.5$ (first row)  and  for $u=3$ (second row). Preliminary studies of BEP functions,  $u \mapsto v(u)$  and its derivatives are identical to those in  Section \ref{asymtoticSmallEpsilon}.

{\scriptsize
\begin{table}[H]
\centering
        \begin{tabular}{c|c|c|c|c|c|c|c}
            \hline \hline
        {\Large\strut}     $u$ & $X$ &    $\epsilon$     &      $\varepsilon : = \epsilon^2\, \E[X^2]$        &     $\gamma_1$      &      $\gamma_2$        &     Figure \ref{ZepsilonFigure2}   &   \\ \hline
       {\large\strut}     \multirow{3}{*}{$1.5$} & \multirow{3}{*}{$t_{\nu=5}$} & 0.5 &  0.417    &  0.816  &  0.576 & left panel    &  \multirow{3}{*}{first row} \\ \cline{3-7}
         {\large\strut}               & &  0.3  &   0.150 &    0.294    & 0.206  & center  panel &  \\ \cline{3-7}
           {\large\strut}            &   &  0.1  &  0.017   &   0.033   &  0.023 & right  panel & \\     \hline \hline
           {\large\strut}                \multirow{3}{*}{$3$} & \multirow{3}{*}{$t_{\nu=5}$} & 0.5 & 0.417    &   2.349   &   2.091  & left panel & \multirow{3}{*}{second row} \\ \cline{3-7}
         {\large\strut}               &  &  0.3   &  0.150    &  0.846   &   0.753  & center  panel &\\ \cline{3-7}
           {\large\strut}            &   &  0.1   & 0.017    &   0.094   & 0.084  & right  panel & \\     \hline \hline
        \end{tabular}    \vspace{0.3cm}
\caption{Parameters setting associated to Figure \ref{ZepsilonFigure2}. Here  $|T| = 1024^2$,  $\mu=0$, $\sigma_g=1$  and  $\rho(t)= e^{-\kappa^2 \parallel t \parallel^2}$,   for $\kappa = 100/2^{10}$, \emph{i.e.}, $\lambda= 0.019$. }\label{parametersetting2}
\end{table}}

\begin{figure}[H] 
\includegraphics[width=4.4cm,  height=4.5cm]{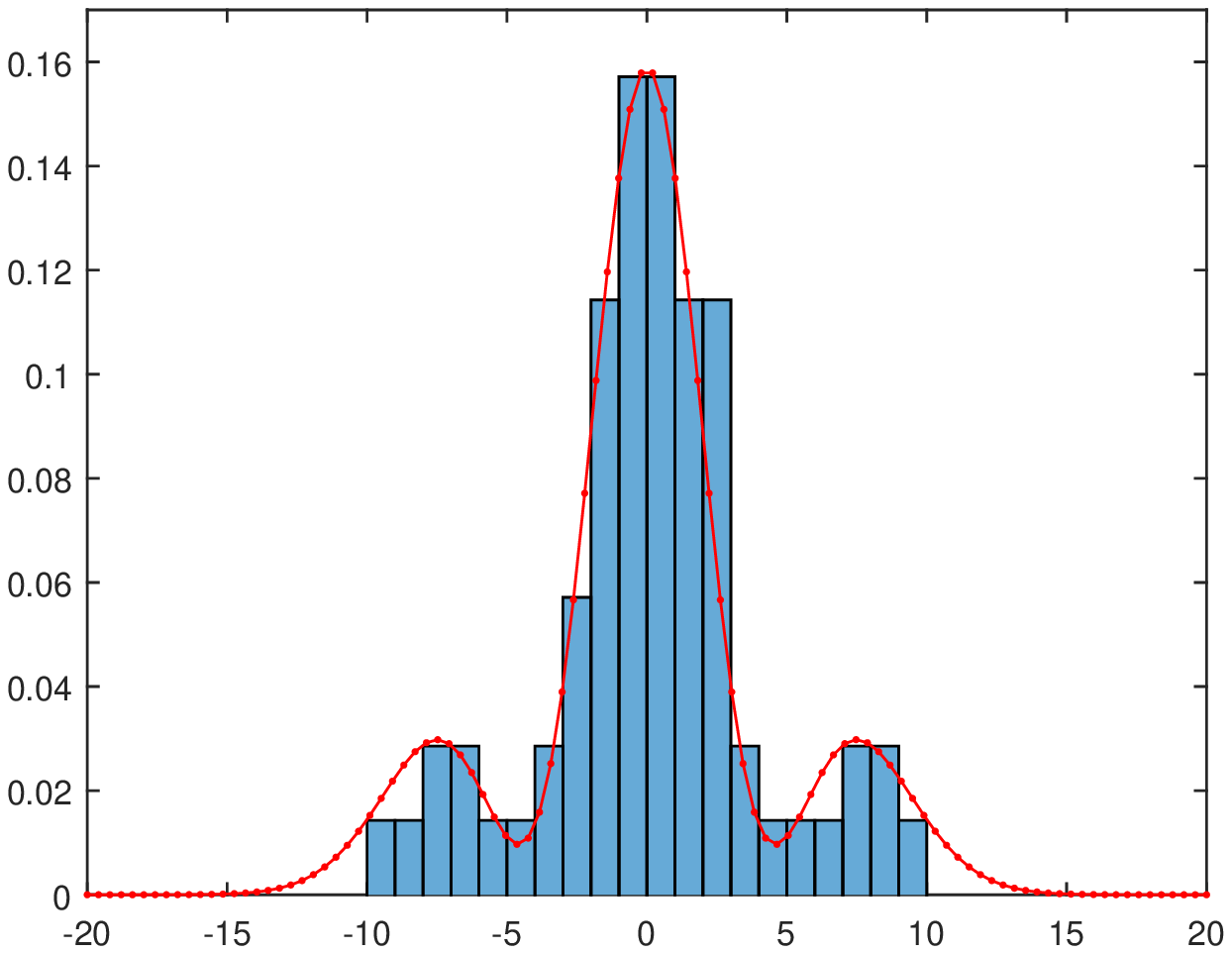}
\includegraphics[width=4.4cm, height=4.5cm]{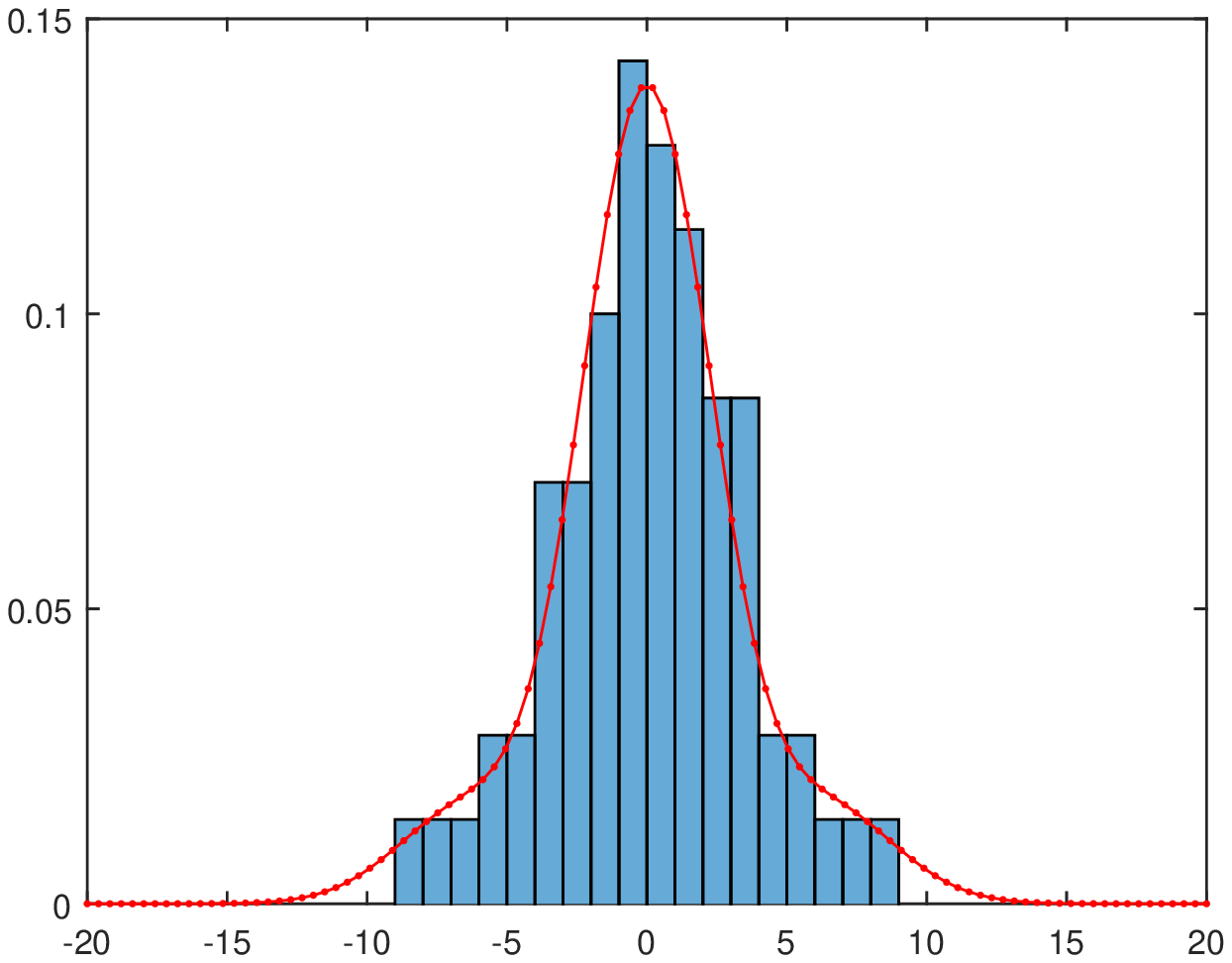}
\includegraphics[width=4.4cm, height=4.5cm]{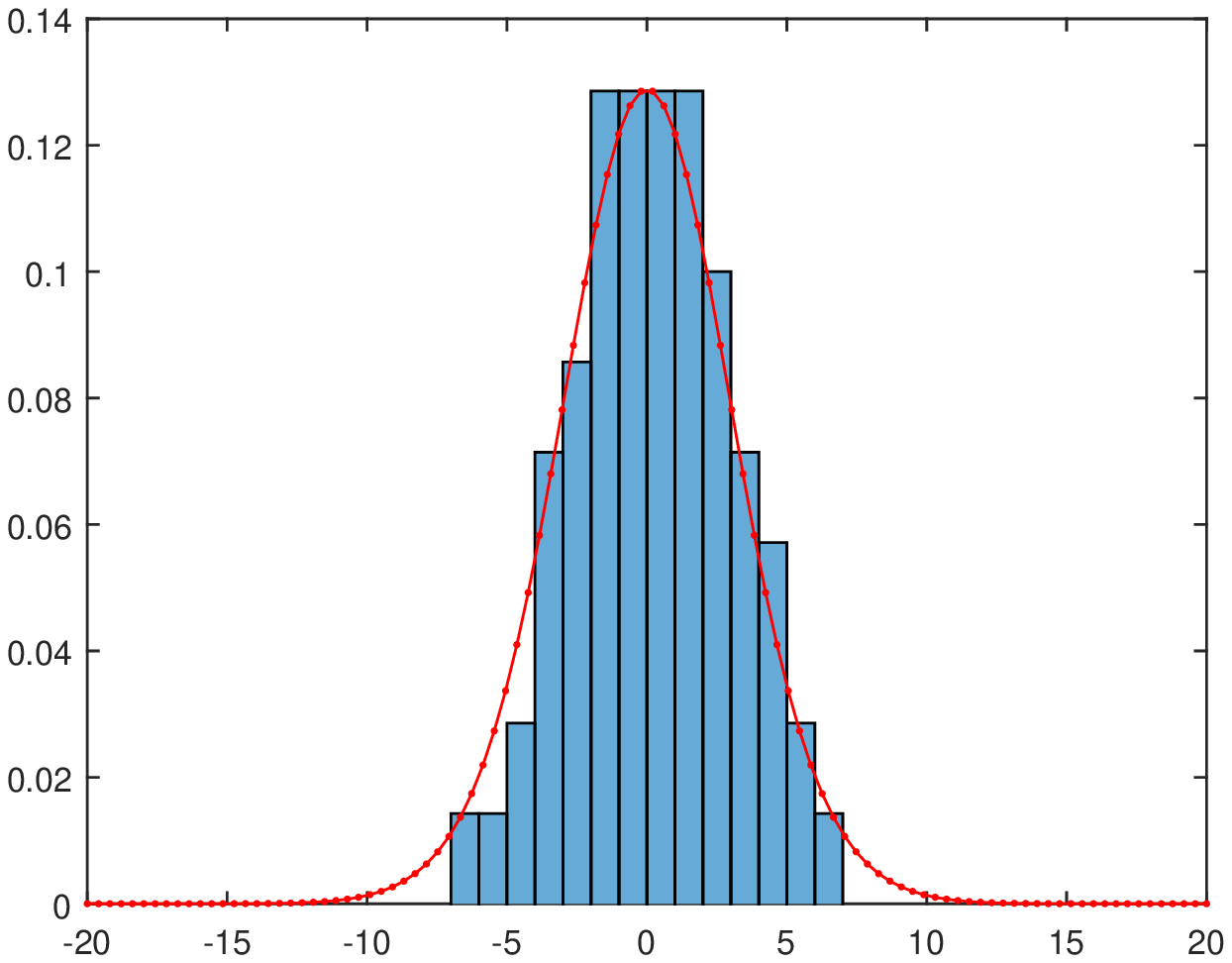}\\
\includegraphics[width=4.4cm, height=4.5cm]{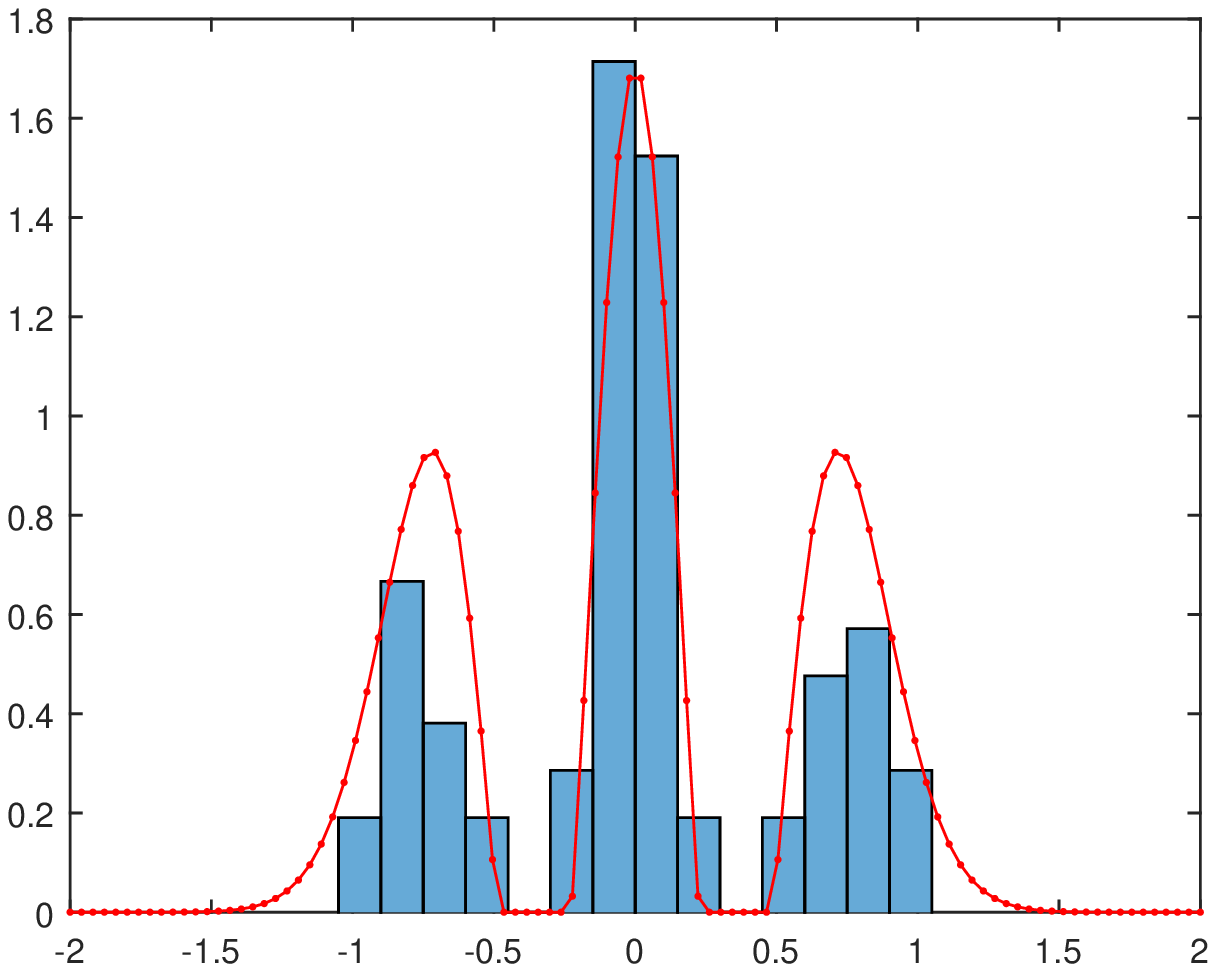}
\includegraphics[width=4.4cm, height=4.5cm]{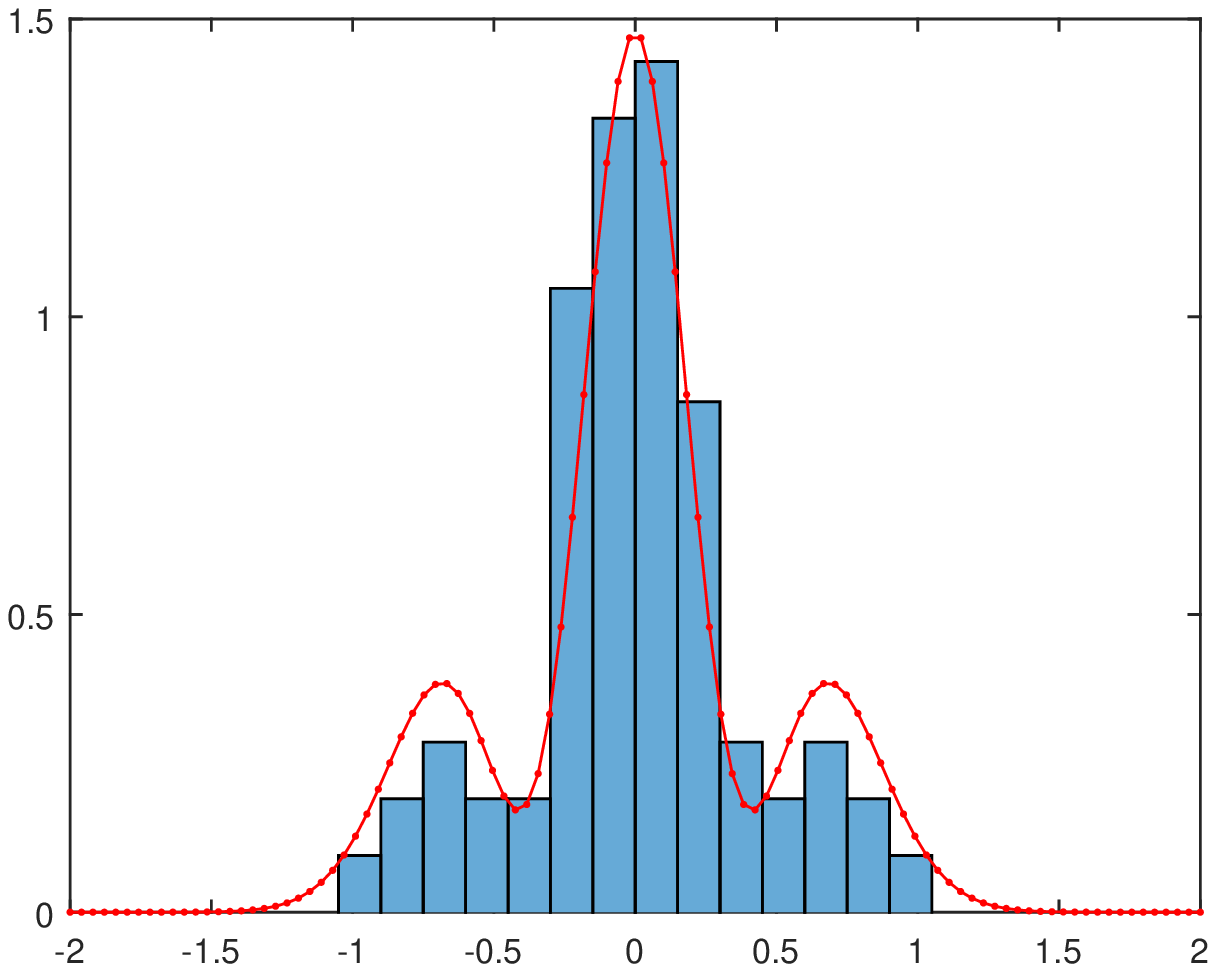}
\includegraphics[width=4.4cm, height=4.5cm]{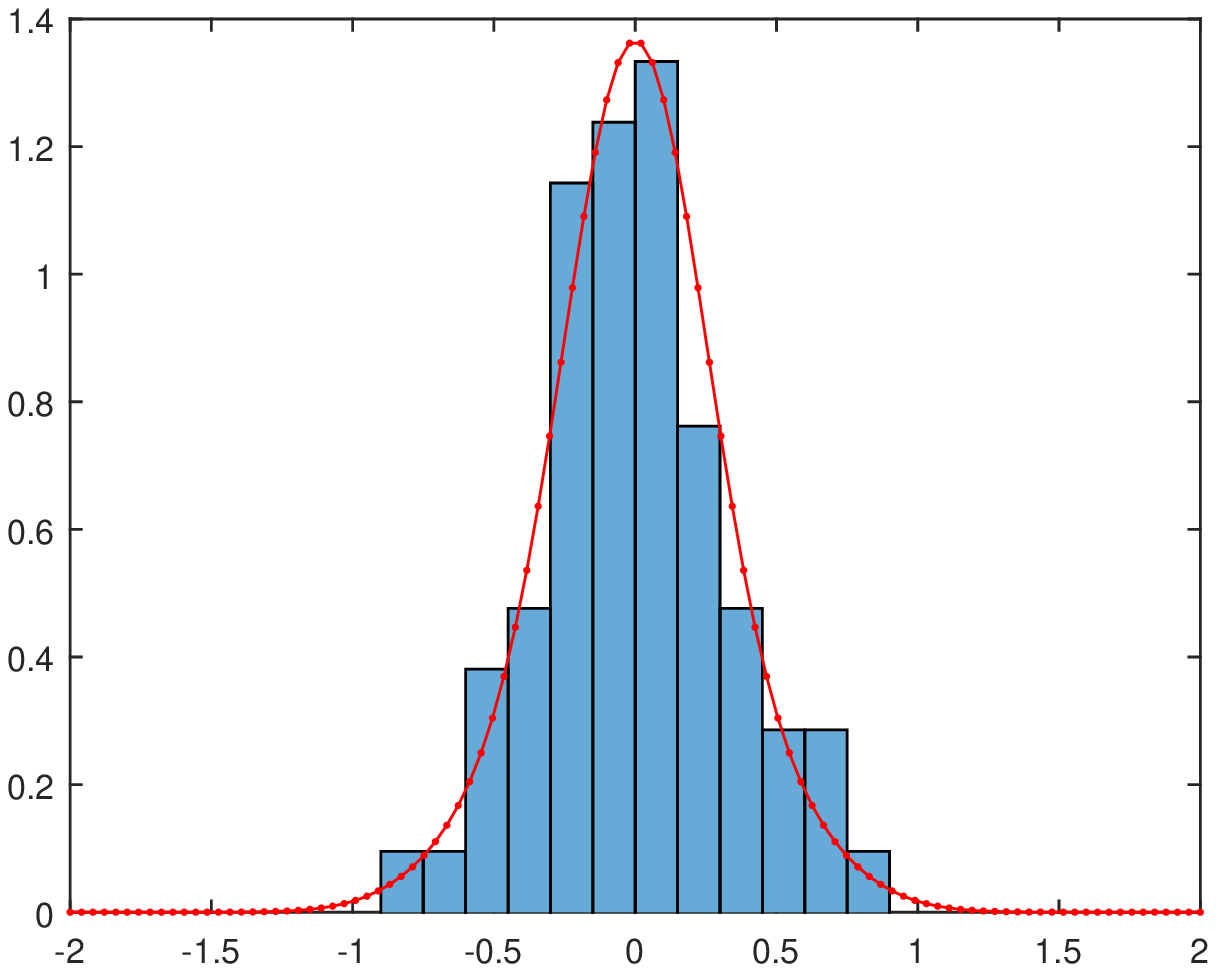}
    \vspace{-0.17cm}
   \caption{\textbf{Histogram for the study of density of $Z^{\epsilon}_T$ when $X$ is $t$-distributed}, for $u=1.5$ (first row) and  $u=3$ (second row),  based on 300 Montecarlo independent simulations.    The chosen parameters setting is gathered  in Table \ref{parametersetting2}.   Necessary preliminary studies to build $\widetilde{h}_\epsilon$ as in \eqref{htilde}, on  BEP functions,  $u \mapsto v(u)$ and its derivatives are given in Figure \ref{fdeltafigure}. Resulting theoretical $\widetilde{h}_\epsilon$ density is drawn by using red plain line.  }\label{ZepsilonFigure2}
\end{figure}

The bimodal behaviour of $\widetilde{h}_\epsilon$ in \eqref{htilde} is clearly visible  in Figures \ref{ZepsilonFigure}-\ref{ZepsilonFigure2}.
Furthermore in the numerical studies above one can appreciate the contiguity property of the proposed model for $\epsilon \to 0$. Indeed since theoretically $h_\epsilon(y)\to \phi(v(u),y)$ as $\epsilon \to 0$, in Figures \ref{ZepsilonFigure}-\ref{ZepsilonFigure2} the unimodal Gaussian behaviour appears when the perturbation magnitude decreases ($\epsilon = 0.5$ in first column of Figures \ref{ZepsilonFigure}-\ref{ZepsilonFigure2}, $\epsilon =0.3$ in second column and  $\epsilon =0.1$ in the third one).  Finally the choice of the level $u$ plays an important role in term of magnitude of obtained histograms (see the $y$-axis scale in Figures \ref{ZepsilonFigure}-\ref{ZepsilonFigure2}). This behaviour was already visible in the theoretical $\widetilde{h}_\epsilon$ function (see center and right panels of Figure \ref{fdeltafigure} for $u=1.5$ and $u=3$ respectively).

\section{Inference for perturbation}\label{sectionInference}

\subsection{Unbiased estimator of the perturbation}\label{estimatorPerturbation}

In this section we will focus on the case $\sigma_g=1$. Let  $u \neq 0$ being fixed. We introduce $\varepsilon: = \epsilon^2 \E[X^2]$.  Since $\E[X]=0$, it is clear that $\varepsilon$ quantifies the variability around zero of the considered perturbation and it can be useful to measure the discrepancy between the observed  excursion set $T \cap E_f(u)$ and the associated Gaussian one. \\

By using  \eqref{C2Tadditive} and then \eqref{eqGaussD}, we can rewrite
\begin{align*}
\E[C^{/T}_2(f,u)] &= C^*_2(g,u) +  \varepsilon  \frac{\pi}{\lambda} C^*_0(g,u) + O(\varepsilon^{3/2})=\Psi(u)+ \varepsilon \frac{u}{2 \sqrt{2 \pi} } \rme^{\frac{-u^2}{2}}  + O(\varepsilon^{3/2}).
\end{align*}
It appears then clearly that $\varepsilon$ has the same order of magnitude than
\begin{align}\label{epsilontilde}
{\varepsilon}_u := \frac{2 \sqrt{2 \pi}  \, \rme^{\frac{u^2}{2}}}{u}(C^*_2(f,u) - \Psi(u)).
\end{align}
This means that    $\varepsilon$ in \eqref{epsilontilde}  can be estimated by using the LK curvature of order $2$, \emph{i.e.}, the area of the excursion set at a (chosen) level $u$.    Then,  ${\varepsilon}_u$  is completely empirically accessible by using  this sparse observation   because it does not depend on the (unknown) second spectral moment $\lambda$ of the Gaussian field.  In Proposition \ref{epsilonProposition}  below, we   present a consistent estimator based on the  observation $T \cap E_f(u)$ for  the perturbation error ${\varepsilon}_u$.

\begin{Proposition}\label{epsilonProposition}
Let  $f(t) = g(t) + \epsilon \, X$,   $t\in \R^2$ as in Definition \ref{MODELadditive}. Let  $u \neq 0$ being fixed.
Let consider the empirical counterpart of ${\varepsilon}_u$ in  \eqref{epsilontilde},  i.e.,
\begin{align}\label{hateps}
\widehat{{\varepsilon}}_u :=   \frac{2 \sqrt{2 \pi} \rme^{\frac{u^2}{2}}}{u} \left(\widehat{C}_{2,T}(f,u) - \Psi(u)\right),
\end{align}
with $\widehat{C}_{2,T}(f,u)$ as in  \eqref{eq:UestC2}. Then, it holds that
\begin{description}
  \item[(\emph{i})]    $\widehat{{\varepsilon}}_u$ is an  unbiased estimator for ${\varepsilon}_u$, \vspace{-0.1cm}
   \item[(\emph{ii})] $|T^{(N)}|^{1/2}  (\widehat{{\varepsilon}}_u - {\varepsilon}_u)  \overset{d}{\underset{N\to\infty}{\longrightarrow}} N(0, \sigma^2_{{\varepsilon}_u})$, with $\sigma^2_{{\varepsilon}_u} = \,8 \pi  \,\frac{\rme^{u^2}}{u^2}\,v(u)$   for   $v(u)$ as in \eqref{vu}    with $\sigma_g=1$.
\end{description}
\end{Proposition}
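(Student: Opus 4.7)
The plan is to observe that $\widehat{\varepsilon}_u$ is a deterministic affine transformation of the empirical area $\widehat{C}_{2,T}(f,u) = C_2^{/T}(f,u)$: since $\Psi(u)$, $u$ and $\rme^{u^{2}/2}$ are known constants, both the unbiasedness and the central limit theorem for $\widehat{\varepsilon}_u$ reduce immediately to the corresponding properties of the empirical excursion area, which have already been established in the paper.

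For part (\emph{i}), I would take expectations directly in \eqref{hateps}. By Remark \ref{remark:standard} (or, equivalently, by stationarity and Fubini applied to $|T\cap E_f(u)| = \int_T \mathbf{1}_{\{f(t)\geq u\}}\,\rmd t$, which gives $\E[C_2^{/T}(f,u)] = \mathbb P(f(0)\geq u)$, a quantity independent of $T$ and equal to $C_2^*(f,u)$), one has $\E[\widehat{C}_{2,T}(f,u)] = C_2^*(f,u)$. Combining with the definition \eqref{epsilontilde} of $\varepsilon_u$ yields
\[\E[\widehat{\varepsilon}_u] \;=\; \frac{2\sqrt{2\pi}\,\rme^{u^{2}/2}}{u}\bigl(C_2^*(f,u)-\Psi(u)\bigr) \;=\; \varepsilon_u.\]

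For part (\emph{ii}), using again the identity $\E[\widehat{C}_{2,T^{(N)}}(f,u)] = C_2^*(f,u)$, I would rewrite the scaled deviation as
\[|T^{(N)}|^{1/2}\bigl(\widehat{\varepsilon}_u - \varepsilon_u\bigr) \;=\; \frac{2\sqrt{2\pi}\,\rme^{u^{2}/2}}{u}\, Y_{T^{(N)}}^{\epsilon_N}(u),\]
with $Y_T^{\epsilon}$ given by \eqref{YTu}. Under the condition \eqref{cond_epsilon}, Theorem \ref{theoremThetaAsym} yields $Y_{T^{(N)}}^{\epsilon_N}(u)\overset{d}{\longrightarrow} \mathcal N(0,v(u))$ as $N\to\infty$. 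Multiplying by the deterministic prefactor (Slutsky) gives a centered Gaussian limit with variance $\bigl(2\sqrt{2\pi}\,\rme^{u^{2}/2}/u\bigr)^{2}v(u) = 8\pi\,\rme^{u^{2}}v(u)/u^{2} = \sigma^{2}_{\varepsilon_u}$, as claimed.

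Since all the substantive work has already been carried out in Remark \ref{remark:standard} and Theorem \ref{theoremThetaAsym}, no real technical obstacle remains; the proof is essentially bookkeeping. The only point worth emphasizing is the \emph{implicit scaling assumption} underlying (\emph{ii}): a Gaussian limit is only available in the regime $N\epsilon_N \to 0$. For a genuinely fixed $\epsilon>0$, Theorem \ref{theoTCLZ} shows that $Y_{T^{(N)}}^{\epsilon}(u)$ converges to the non-Gaussian variable $\Theta_\epsilon(u)$, so the statement of (\emph{ii}) must be read as pertaining to the vanishing-perturbation/growing-domain regime of Theorem \ref{theoremThetaAsym}.
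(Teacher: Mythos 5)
Your proof is correct and follows essentially the same route as the paper: unbiasedness of $\widehat{C}_{2,T}(f,u)$ gives (\emph{i}), and the identity $|T^{(N)}|^{1/2}(\widehat{\varepsilon}_u-\varepsilon_u)=\frac{2\sqrt{2\pi}\,\rme^{u^2/2}}{u}\,Y^{\epsilon_N}_{T^{(N)}}(u)$ combined with Theorem \ref{theoremThetaAsym} gives (\emph{ii}). Your closing remark that the Gaussian limit in (\emph{ii}) implicitly presupposes the regime $N\epsilon_N\to 0$ of condition \eqref{cond_epsilon} is a fair and worthwhile clarification of the statement, and is consistent with the paper's own reliance on Theorem \ref{theoremThetaAsym}.
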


\begin{proof}[Proof of Proposition \ref{epsilonProposition}]
Since $\widehat{C}_{2,T}(f,u)$ is an unbiased estimator of $C^*_2(f,u)$,  one can easily  see that $\E[\widehat{{\varepsilon}}_u - {\varepsilon}_u]=0$.   Furthermore,   using the fact that  $|T^{(N)}|^{1/2}  (\widehat{{\varepsilon}}_u - {\varepsilon}_u) =   \frac{\sqrt{8 \pi} \rme^{\frac{u^2}{2}}}{u} Y^{\epsilon_N}_{T^{(N)}}(u)$, from Theorem \ref{theoremThetaAsym}  we get the result.
\end{proof}

\begin{Remark}\label{remark0}\rm
If $u=0$, assuming that $\mathbb E[X^3]\ne 0$ and the fourth moment of $X$ is finite, then by Taylor developing the function $\mathbb E[C^{/T}_2(f,0)]$ up to the order $3$ (see Proposition \ref{LKadditive}) we easily get an unbiased and asymptotically normal estimator for $\epsilon^3 \mathbb E[X^3]$, similar to the r.h.s. of (\ref{hateps}).
\end{Remark}

\subsection{Numerical illustrations}

In this section we provide an illustration of the inference procedure for the perturbation $\varepsilon : = \epsilon^2\, \E[X^2]$  proposed in Section \ref{estimatorPerturbation} above. The considered perturbed model and the associated parameters are gathered in  Table \ref{parametersettingVarespsilon1}. By using this framework,  in Figure \ref{varepsilonFigure1} one can appreciate the finite sample performance of the   inference procedure  proposed  in  Section \ref{estimatorPerturbation} above, for several values of perturbation $\epsilon$ and for two levels $u$ ($u=1.5$ in center    panel and  $u=3$ in right one).

{\scriptsize
\begin{table}[H]
\centering
\begin{tabular}{c|c|c|c|c|c}
       \hline \hline
       {\Large\strut}   Level  $u$ & $X$ &   Chosen    $\epsilon$     &   $\varepsilon : = \epsilon^2\, \E[X^2]$    & average of estimated $\widehat{{\varepsilon}}_u$    &     Figure \ref{varepsilonFigure1}     \\
             &  &        &     & \tiny on 100 Montecarlo Simulations  &      \\ \hline
            {\large\strut}   \multirow{5}{*}{$1.5$} &  \multirow{5}{*}{\begin{tabular}{ c }
                                                                         Skellam  \\
                                                                        $\mu_1=\mu_2=1$ \\
                                                                       \end{tabular}}
                                      &0.1  &   0.02  &  0.023      &   \multirow{5}{*}{first panel}      \\ \cline{3-5}
             {\large\strut}      &     & 0.2 &   0.08   &  0.085   &       \\ \cline{3-5}
               {\large\strut}     &   & 0.3   &  0.18    & 0.182      &       \\ \cline{3-5}
               {\large\strut}     &   & 0.4   &  0.32    &   0.324      &       \\ \cline{3-5}
              {\large\strut}      & &  0.5  & 0.50   &    0.492  &     \\   \hline \hline
           {\large\strut}  \multirow{5}{*}{$3$} &  \multirow{5}{*}{\begin{tabular}{ c }
                                                                         Skellam  \\
                                                                        $\mu_1=\mu_2=1$ \\
                                                                       \end{tabular}}
                              &  0.1   &  0.02  &  0.033     &   \multirow{5}{*}{second  panel}     \\ \cline{3-5}
             {\large\strut}  &  & 0.2   &  0.08     & 0.072       &        \\ \cline{3-5}
           {\large\strut}    &  & 0.3 & 0.18   &   0.158     &        \\ \cline{3-5}
         {\large\strut}    &    & 0.4 &  0.32    &    0.345    &        \\ \cline{3-5}
         {\large\strut}    &    & 0.5 &  0.50  &   0.510       &       \\   \hline \hline
        \end{tabular} \vspace{0.3cm}
\caption{Parameters setting associated to Figure \ref{varepsilonFigure1}. Furthermore we consider $|T| = 1024^2$,  $\mu=0$, $\sigma_g=1$  and  $\rho(t)= e^{-\kappa^2 \parallel t \parallel^2}$,   for $\kappa = 100/2^{10}$, \emph{i.e.}, $\lambda= 0.019$. }\label{parametersettingVarespsilon1}
\end{table}}

\begin{figure}[H]
\hspace{-0.5cm}
\includegraphics[width=4.4cm, height=4.5cm]{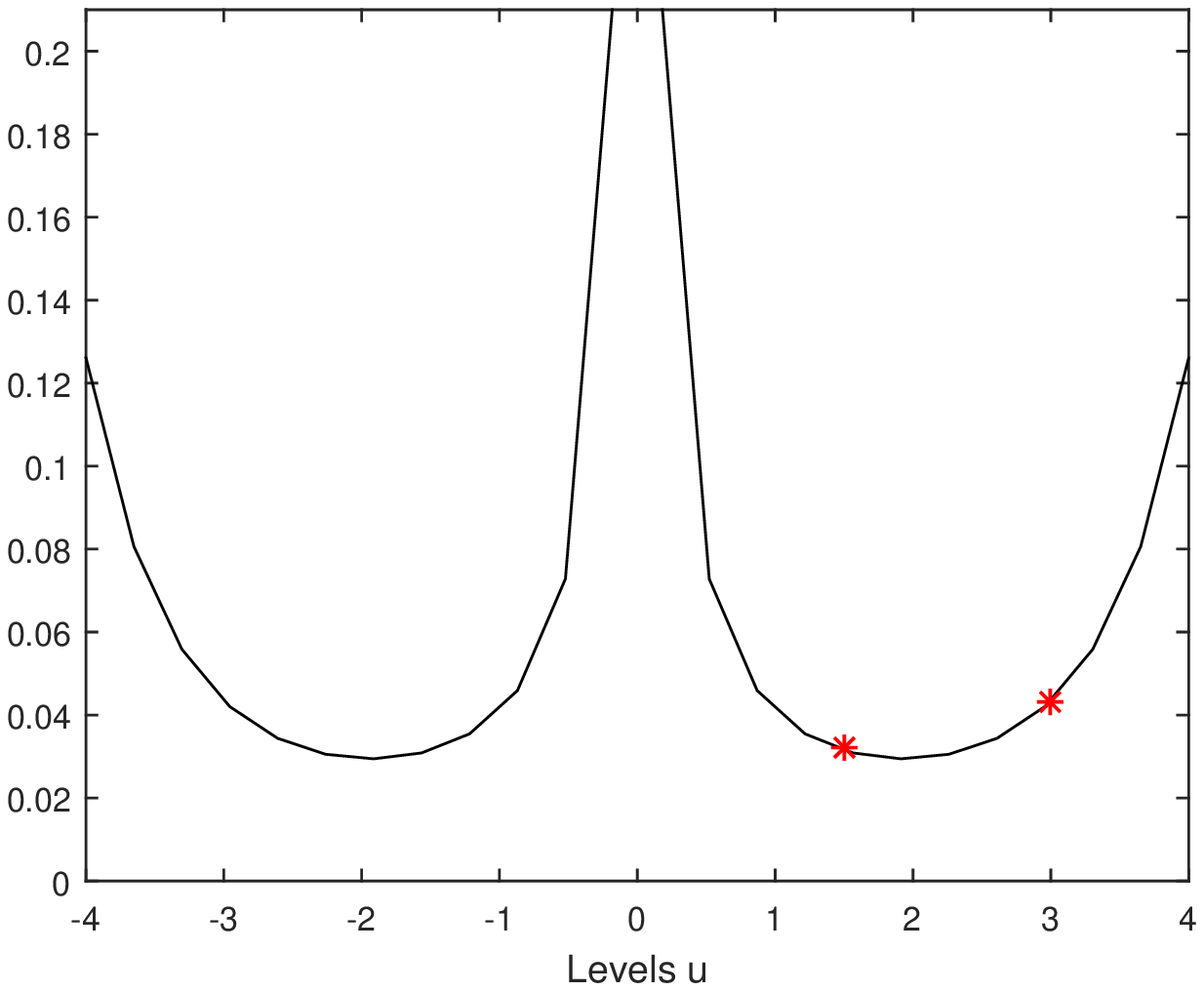}
\includegraphics[width=4.4cm, height=4.5cm]{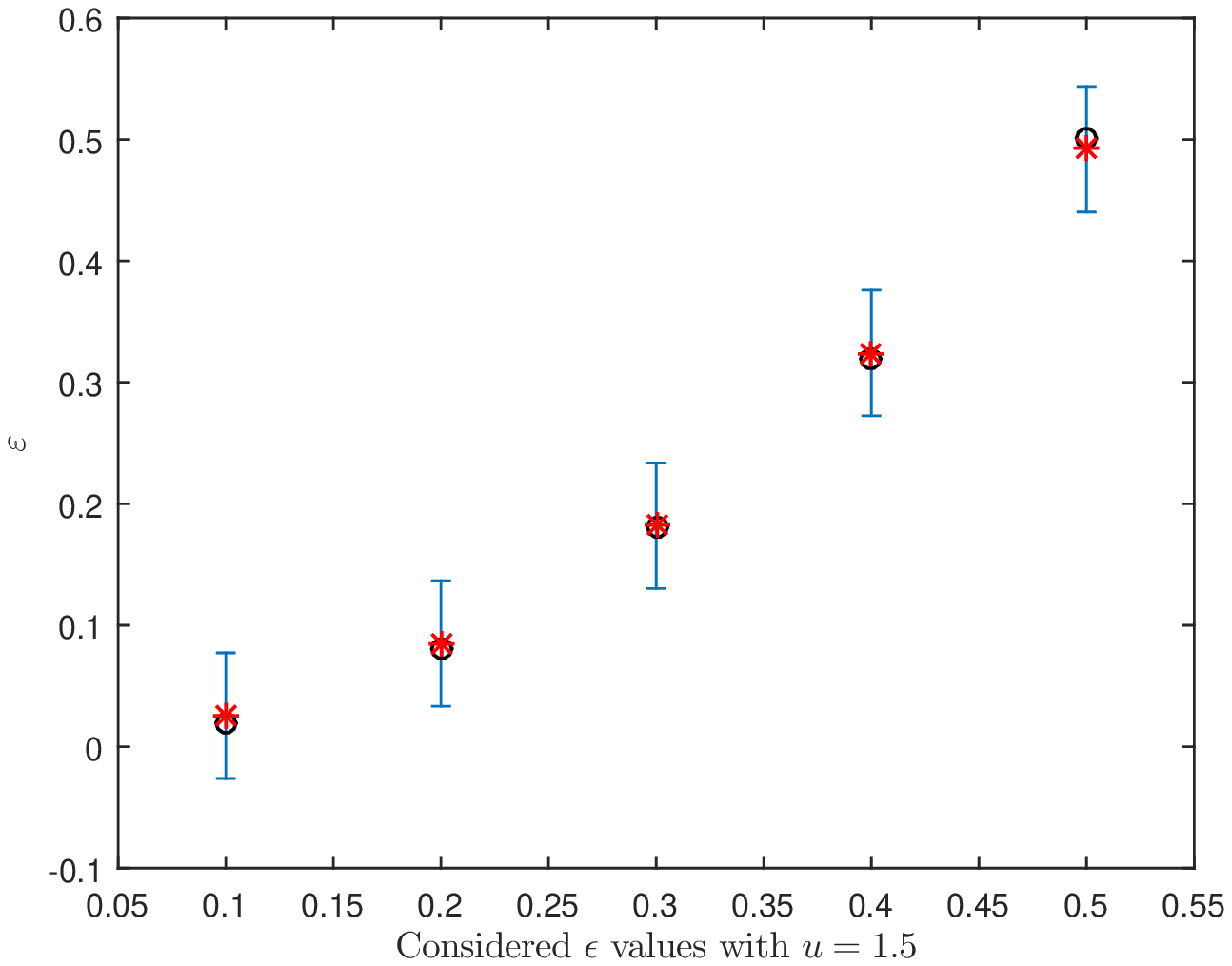}
\includegraphics[width=4.4cm,height=4.5cm]{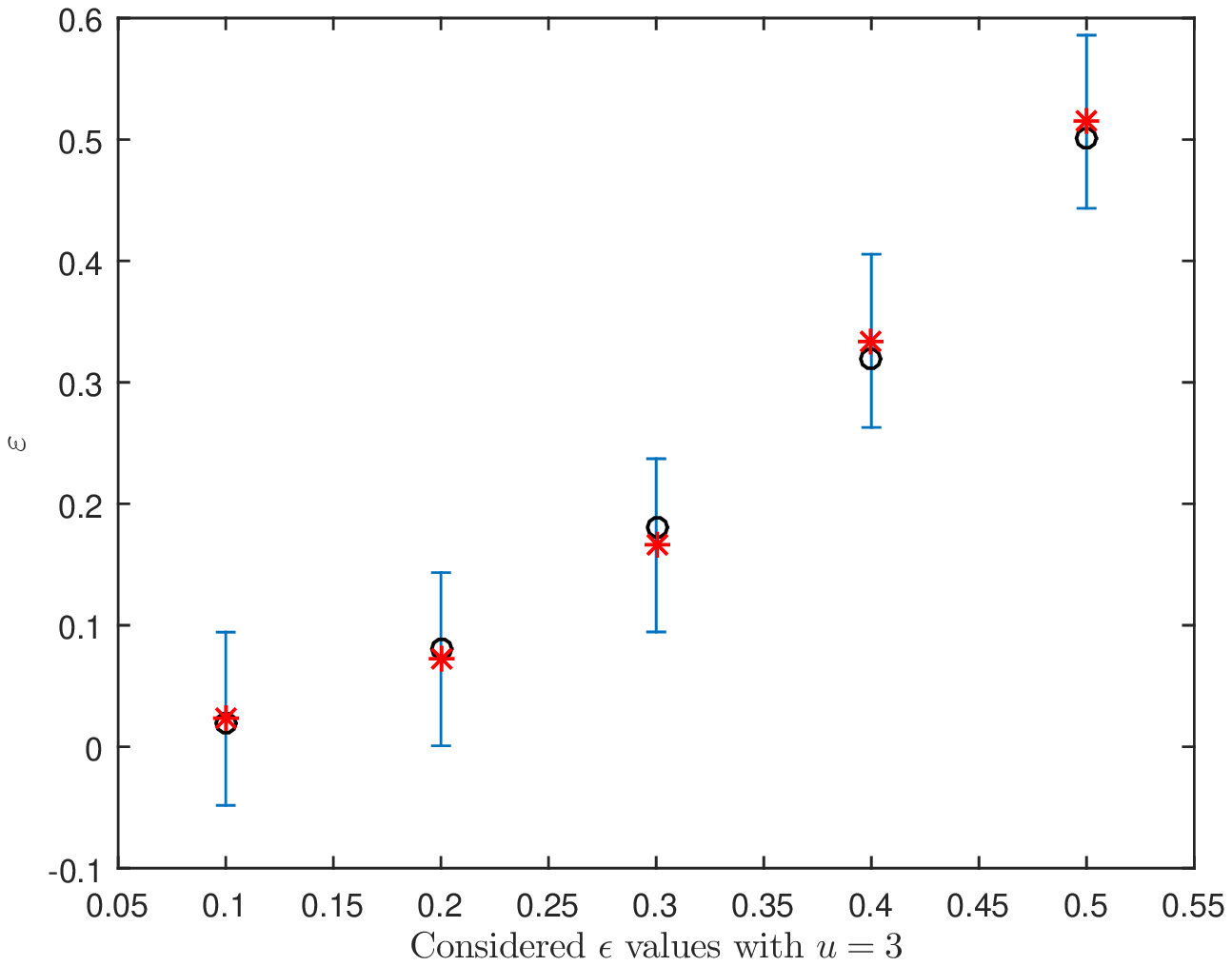}
    \vspace{-0.23cm}
   \caption{Left panel: theoretical $u \mapsto \sigma_{{\varepsilon}_u} |T^{(N)}|^{-1/2}$ in Proposition \ref{epsilonProposition}.  Center  and right panels: Theoretical  $\varepsilon : = \epsilon^2 \E[X^2]$  (black circles) and obtained average of  $\widehat{{\varepsilon}}_u$ on  100 Montecarlo independent    simulations (red stars).  Theoretical confidence intervals at level $0.95$ prescribed by Proposition \ref{epsilonProposition} are also displayed.       The chosen parameters setting is gathered  in Table \ref{parametersettingVarespsilon1}. Here $u=1.5$ (center  panel) and  $u=3$ (right panel). }\label{varepsilonFigure1}
\end{figure}

 Unsurprisingly, we remark that the variability of the estimation is related on the choice of level $u$.  The  asymptotic standard deviation  function  $u \mapsto \sigma_{{\varepsilon}_u}  |T^{(N)}|^{-1/2}$   in the left panel of Figure \ref{varepsilonFigure1}  allows us to identify some choices of levels $u$ where the variance is minimum.   Indeed,   for large values of $|u|$, less observations are available than for intermediate values of $|u|$.  This aspect can be appreciated by observing the larger confidence intervals in the  case $u=3$. For $u=0$, the variance  $\sigma_{{\varepsilon}_u}^2$ diverges (see the left panel of Figure \ref{varepsilonFigure1}) implying that this inference procedure will be not robust for $u \approx 0$ (see Remark \ref{remark0}).

\paragraph{Acknowledgements} The authors would like to thank Domenico Marinucci for insightful comments. The research of MR has been supported by the Fondation Sciences Math\'ematiques de Paris, and is currently supported by the ANR-17-CE40-0008 project \emph{Unirandom} and by the PRA 2018 49 project at the University of Pisa.

\appendix

\section{Uniform rates of convergence for CLTs of  sojourn times of stationary Gaussian fields}\label{appS}

The following lemma is a refinement of a result in \citet{pham}. Therefore, we keep the notations introduced therein.

\begin{Lemma}\label{lem_app}
Let $\lbrace X(t), t\in \mathbb R^d\rbrace$ be a stationary centered Gaussian field with unit variance and covariance function $\rho \in L^1(\mathbb R^d)$. For $T>0$ and $u\in \R$, we define $S_T(u)$ to be
\[
S_T(u) := \int_{[0, T]^d} 1_{(X(t)\ge u)}\,dt
\]
the excursion volume above level $u$. Under the hypothesis that
\begin{equation}\label{condPham}
\int_ {\mathbb R^d \setminus [-a,a]^d} |\rho(t)|\,dt = O\left ( 1/\log a \right ),\quad a\to +\infty,
\end{equation}
we have, as $T\to +\infty$,
\[
d_W \left ( \frac{S_T(u) - T^d\Phi(u)}{\sqrt{T^d}}, \mathcal N(0,\sigma^2(u))\right) = O\left (1/(\log T)^{1/12} \right),
\]
where the constant involved in the $O$-notation only depends on the field $\lbrace X(t), t\in \mathbb R^d\rbrace$ and
$$ 0<\sigma^2(u):=\sum_{n=1}^{+\infty} \frac{\phi^2(u) H^2_{n-1}(u)}{n!} \int_{\mathbb R^d} \rho^n(t)\,dt < +\infty,$$
$\phi$ being the density function of the standard Gaussian and $\Phi$ the tail of its distribution.
\end{Lemma}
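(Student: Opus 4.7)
My plan is to follow the Hermite (Wiener chaos) expansion strategy used by \citet{pham}, but to carefully track the dependence on the level $u$ and show that every implicit constant can be made uniform in $u$.

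First, I would write the standard Hermite expansion of the indicator: for every $u\in\R$,
\[
1_{\{X(t)\ge u\}}-\Phi(u)\;=\;\sum_{n\ge 1}\frac{c_n(u)}{n!}\,H_n(X(t)),\qquad c_n(u):=\phi(u)\,H_{n-1}(u),
\]
convergence being in $L^2(\gamma_1)$. Integrating over $[0,T]^d$ and dividing by $T^{d/2}$ yields a chaos expansion of our target statistic. The crucial uniform ingredient is the classical bound $|\phi(u)H_{n-1}(u)|\le c\sqrt{(n-1)!}$, valid for \emph{all} $u\in\R$ (this follows, e.g., from Cramér's inequality on Hermite functions), which gives $c_n(u)^2/n!\le c^2/n$ uniformly in $u$.

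Next I would truncate the expansion at order $N$, splitting
\[
W_T(u):=\frac{S_T(u)-T^d\Phi(u)}{\sqrt{T^d}}\;=\;A_{T,N}(u)+R_{T,N}(u),
\]
where $A_{T,N}$ collects chaoses of orders $1,\dots,N$. By stationarity and isometry,
\[
\E[R_{T,N}(u)^2]\;\le\;\sum_{n>N}\frac{c_n(u)^2}{n!}\int_{\R^d}|\rho(t)|^n\,\rmd t\;\le\;C\sum_{n>N}\frac{1}{n}\,\|\rho\|_1\,\|\rho\|_\infty^{\,n-1},
\]
uniformly in $u$ and in $T$, which is $O(\alpha^N)$ for some $\alpha<1$ if $\|\rho\|_\infty<1$, and in any case is $O(1/N)$ by dominated convergence once one localises to large $n$. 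Since $d_W$ is controlled by the $L^2$-norm, this shows $d_W(W_T(u),A_{T,N}(u))$ is small uniformly in $u$, and a similar estimate controls $d_W(\mathcal N(0,\sigma^2(u)),\mathcal N(0,\sigma_N^2(u)))$ where $\sigma_N^2(u)$ is the variance of the truncated limit (using the Lipschitz dependence of the Wasserstein distance between centred Gaussians on their standard deviations).

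For the main term $A_{T,N}(u)$ I would invoke a quantitative Breuer-Major / fourth-moment bound on finite sums of Wiener chaoses (Nourdin-Peccati). For each fixed $n\le N$, the Wasserstein distance between $T^{-d/2}\int_{[0,T]^d}H_n(X(t))\rmd t$ and its Gaussian limit is bounded by a multiple of the contraction norms, which by the standard computation reduce to integrals of powers of $\rho$ outside a large box $[-a,a]^d$. Precisely, one obtains a bound of the form
\[
d_W\!\bigl(A_{T,N}(u),\mathcal N(0,\sigma_N^2(u))\bigr)\;\le\;C\,P(N)\!\left(\!\int_{\R^d\setminus[-a,a]^d}\!|\rho|\,\rmd t\;+\;\frac{a^d}{T^d}\right)^{1/2}\!\!,
\]
uniformly in $u$ thanks to the uniform control on $c_n(u)^2/n!$, with $P(N)$ polynomial in $N$. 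Using hypothesis \eqref{condPham} this is at most $CP(N)\bigl((\log a)^{-1}+a^d/T^d\bigr)^{1/2}$, and taking $a$ a small fractional power of $T$ gives a bound of order $P(N)(\log T)^{-1/2}$.

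The final step is the balancing: combining the truncation error $O(N^{-1/2})$ with the CLT error $O(P(N)(\log T)^{-1/2})$ and optimising over $N$ of polynomial order $N\sim(\log T)^{\beta}$ produces the stated rate $(\log T)^{-1/12}$ (the exponent $1/12$ comes precisely from tracking the degree of the polynomial $P$ arising from the fourth-moment estimate on sums of chaoses up to order $N$). The main obstacle, and the only novelty compared to \citet{pham}, is verifying that throughout this scheme every constant is genuinely uniform in $u$, which boils down to the single uniform estimate $|\phi(u)H_{n-1}(u)|\lesssim\sqrt{(n-1)!}$ mentioned above, and to the fact that the contraction norms of $H_n(X(\cdot))$ depend only on the field, not on the level.
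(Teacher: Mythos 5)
Your overall architecture is the same as the paper's (truncate the Hermite expansion at level $N$, bound the tail uniformly in $u$, apply a quantitative CLT to the truncated sum, balance $N$ against $T$), but there is a genuine gap at the single most important quantitative step: the truncation error. The plain Cram\'er bound $|\phi(u)H_{n-1}(u)|\le c\sqrt{(n-1)!}$ only gives $c_n(u)^2/n!\le c^2/n$, and since the field has unit variance you always have $\|\rho\|_\infty=\rho(0)=1$, so your displayed bound
\[
\E[R_{T,N}(u)^2]\le C\sum_{n>N}\frac 1n\,\|\rho\|_1\,\|\rho\|_\infty^{\,n-1}
\]
is a divergent series: the case ``$\|\rho\|_\infty<1$'' never occurs, and the fallback claim that the tail is ``$O(1/N)$ by dominated convergence'' is unsubstantiated --- dominated convergence gives $\int_{\R^d}|\rho|^n\,\rmd t\to 0$ but no rate, and without a rate you cannot quantify $\sum_{n>N}\frac1n\int|\rho|^n$ (near $t=0$ one has $|\rho(t)|\approx 1$, so the decay of $\int|\rho|^n$ is governed by the local behaviour of $\rho$ at the origin and is not uniform over the class of fields considered). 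The paper's proof avoids this by using the sharper Hille--Imkeller estimate $\rme^{-x^2/4}|H_n(x)|\le K\sqrt{n!}\,n^{-1/12}$ (see \eqref{hille}): writing $\phi(u)|H_{n-1}(u)|=\tfrac1{\sqrt{2\pi}}\rme^{-u^2/4}\cdot \rme^{-u^2/4}|H_{n-1}(u)|$ yields $c_n(u)^2/n!\lesssim n^{-1}(n-1)^{-1/6}$ uniformly in $u$, so the tail sum is $O(N^{-1/6})$ and the truncation error in Wasserstein distance is $O(N^{-1/12})$.

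This also means your closing explanation of where the exponent $1/12$ comes from is incorrect: it is not produced by the polynomial $P(N)$ in the fourth-moment bound (in the paper that contribution is $d_2=O(3^{N_T}/\sqrt{T^d})$, which is negligible for $N_T=(\log T)/4$), but precisely by the extra factor $n^{-1/12}$ in the Hermite-function estimate, which controls both the truncation term $d_1$ and the variance-replacement term $d_3$. To repair your argument you should replace Cram\'er's inequality by the refined bound above; the rest of your scheme (conditional fourth-moment CLT for the truncated sum, Lipschitz comparison of centred Gaussians via $|\sigma-\sigma_N|\le\sqrt{|\sigma^2-\sigma_N^2|}$, and the choice $N\asymp\log T$) then goes through and matches the paper.
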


\begin{Remark}
Actually, Theorem 2 in \citet{pham} ensures that, as $|T|\to +\infty$,
\begin{equation}\label{ineqPham}
d_W \left (  \frac{S_T(u) - T^d\Phi(u)}{\sqrt{T^d}}, \mathcal N(0,\sigma^2(u))\right) = O\left (1/(\log T)^{1/4}\right ),
\end{equation}
where the constant involved in the O-notation depends on the field and the level. By adapting the proof, we provide a \emph{uniform} rate of convergence w.r.t. the level $u$ in \eqref{ineqPham}.
\end{Remark}

\begin{proof}[Proof of Lemma \ref{lem_app}]
We will use the following estimate (see e.g. \cite[(30)]{hille1926} and \cite[Proposition 3]{Imkeller} ):  for every $n\in \mathbb N$ and $x\in \mathbb R$ we have
\begin{equation}\label{hille}
\exp(-x^2/4) |H_n(x)| \le K \,\sqrt{n!}\,n^{-1/12},
\end{equation}
where $K>0$ is an absolute constant.
We can write
\begin{equation}
\begin{split}
d_W\left (  \frac{S_T(u) - T^d\Phi(u)}{\sqrt{T^d}}, \mathcal N(0,\sigma^2(u) )     \right )
& \le d_W \left (\frac{S_T(u) - T^d\Phi(u)}{\sqrt{T^d}}, \frac{S_{T,N_T}(u) - \mathbb E[S_{T,N_T}(u)]}{\sqrt{T^d}}\right ) \cr
& + d_W \left ( \frac{S_{T,N_T}(u) - \mathbb E[S_{T,N_T}(u)]}{\sqrt{T^d}}, \mathcal N(0, \sigma^2_{N_T}(u) )     \right ) \cr
& + d_W \left ( \mathcal N(0, \sigma^2(u) )  , \mathcal N(0, \sigma^2_{N_T}(u) )     \right ) =: d_1 + d_2 + d_3,
\end{split}
\end{equation}
where $S_{T,N_T}$ is the truncation of $S_T$ at position $N_T$ in the Wiener chaos expansion ($N_T$ will be chosen later on).
For $d_1$ we have, due to \eqref{hille},
\begin{equation}\label{bound1}
\begin{split}
d_1 \le & \sqrt{\sum_{n=N_T +1}^{+\infty} \frac{\phi^2(u) H_{n-1}^2(u)}{n! T^d} \int_{[-T,T]^d} \rho^n(t) \prod_{j=1}^d (T- |t_j|)\,dt } \cr
\le & K \sqrt{\phi(u)} \sqrt{\int_{\mathbb R^d} |\rho(t)|\,dt} \sqrt{\sum_{n=N_T +1}^{+\infty} \frac{1}{n (n-1)^{1/6}}}  \cr
= & O(N_T^{-1/12}),
\end{split}
\end{equation}
where the constant involved in the O-notation only depends on the field.

Note that due to \eqref{hille} we can give upper bounds for $d_2$ and $d_3$  (being inspired by the proof of Theorem 2 in \citet{pham})  independently of $u$
\begin{equation}\label{bound2e3}
d_2 = O(3^{N_T}/\sqrt{T^d}), \qquad d_3 =O_{T\to +\infty}(N_T^{-1/12} + T^{-1/4} + (\log T)^{-1/2}).
\end{equation}

Summing up the bounds for $d_1, d_2, d_3$ in \eqref{bound1} and \eqref{bound2e3}, and choosing $N_T = (\log T)/4$ we have
$$
d_1 + d_2 + d_3 = O_{T\to +\infty}((\log T)^{-1/12})
$$
that concludes the proof.
\end{proof}

\bibliographystyle{apalike}
\bibliography{biblioLKC}

\end{document}